\numberwithin{equation}{section}
\newtheorem{theorem}{Theorem}[section]
\newtheorem{corollary}[theorem]{Corollary}
\newtheorem{lemma}[theorem]{Lemma}
\theoremstyle{definition}
\newtheorem{remark}[theorem]{Remark}
\newtheorem{definition}[theorem]{Definition}
\newcommand{\E}{{\mathcal E}}
\newcommand{\M}{{\mathcal M}}
\newcommand{\BMO}{{\mathcal {BMO}}}
\newcommand{\be}{\begin{eqnarray*}}
\newcommand{\ee}{\end{eqnarray*}}
\newcommand{\beq}{\begin{equation}}
\newcommand{\eeq}{\end{equation}}
\newcommand{\beqn}{\begin{equation*}}
\newcommand{\eeqn}{\end{equation*}}
\begin{document}

\title[]{John-Nirenerg inequality and atomic decomposition for noncommutative
martingales}

\author{Guixiang Hong$^*$}
\address{Laboratoire de Math{\'e}matiques, Universit{\'e} de Franche-Comt{\'e}, \newline
25030 Besan\c{c}on Cedex, France\\ \emph{E-mail address:
guixiang.hong@univ-fcomte.fr}}
\thanks{*\  Partially supported by ANR grant 2011-BS01-008-01.}

\author{Tao Mei$^\dag$}
\address{Department of Mathematics, Wayne State University, \newline 656 W. Kirby
Detroit, MI 48202. USA \\ \emph{E-mail address: mei@wayne.edu}}
\thanks{\dag\  Partially supported by NSF grant DMS-0901009.}

\thanks{\small {{\it MR(2000) Subject Classification}.} Primary
46L52, 46L53; Secondary 60G42, 60G46.}
\thanks{\small {\it Keywords.}
Noncommutative $L_p$-spaces, Hardy spaces and BMO spaces,
Noncommutative martingales, John-Nirenberg inequality, Atomic
decomposition}.

\maketitle

\begin{abstract}
In this paper, we study the John-Nirenberg inequality for $\BMO$ and
the atomic decomposition for $\mathcal{H}_1$ of noncommutative
martingales. We first establish a crude version of the column (resp.
row) John-Nirenberg inequality for all $0<p<\infty$. By an extreme
point property of $L_p$-space for $0<p\leq1$, we then obtain a fine
version of this inequality.  The  latter corresponds exactly  to the
classical John-Nirenberg inequality and enables us to obtain an
exponential integrability inequality like in the classical case.
These results extend and improve  Junge and Musat's John-Nirenberg
inequality. By duality, we obtain the corresponding $q$-atomic
decomposition for different Hardy spaces $\mathcal{H}_1$  for all
$1<q\leq\infty$, which extends the $2$-atomic decomposition
previously obtained by Bekjan {\it et al}. Finally, we give a
negative answer to  a question posed by Junge and Musat about
$\BMO$.
\end{abstract}

\section{Introduction}

This paper deals with BMO spaces and atomic decomposition for
noncommutative martingales. The modern period of development of
noncommutative martingale inequalities began with Pisier and Xu's
seminal paper \cite{PiXu97} in which they established the
noncommutative Burkholder-Gundy inequalities and Fefferman duality
theorem between $\mathcal{H}_1$ and $\BMO$. Since then remarkable
progress has been made in the field. We refer, for instance, to
\cite{Jun02}, \cite{JuXu03}, \cite{JuXu06}, \cite{Ran02} for other
noncommutative martingales inequalities, to \cite{Mus03},
\cite{BCPY} for interpolation of noncommutative Hardy spaces and to
\cite{PaNa06}, \cite{Per09} for the noncommutative Gundy and Davis
decompositions. Let us also mention two other works that motivate
the present paper. The first one is Junge and Musat's noncommutative
John-Nirenberg theorem \cite{JuMu07} and the second the $2$-atomic
decomposition of the Hardy spaces $\mathcal{H}_1$ by Bekjan, Chen,
Perrin and Yin \cite{BCPY}.

Before describing our main results, we recall the classical John-Nirenberg
inequalities in the martingale theory. Let
$(\Omega, \mathcal{F}, \mathbb{P})$ be a probability space and
$(\mathcal{F}_n)_{n\geq0}$ an increasing sequence of
sub-$\sigma$-algebras of $\mathcal{F}$ with the associated
conditional expectations $(\mathbb E_n)_{n\geq0}$. The $BMO(\Omega)$
space is defined as the set of all $x\in L_1(\Omega)$ with the norm
\begin{align}\label{commu-bmo}
\|x\|_{BMO}=\sup_n\|\mathbb E_n|x-x_{n-1}|\|_{\infty}<\infty.
\end{align}
The classical John-Nirenberg theorem says that there exist two
universal constants $c_1$, $c_2>0$ such that if $\|x\|_{BMO}< c_2$,
then
\begin{align}\label{commu-jn-exp}
\sup_{n}\|\mathbb{E}_n(e^{c_1|x-x_{n-1}|})\|_{\infty}<1.
\end{align}
This statement is equivalent to the following one: There exists an absolute constant $c$
such that for all $1\leq p<\infty$,
\begin{align}\label{commu-jn-1}
\|x\|_{BMO}\leq\sup_{n}\|\mathbb
E_n|x-x_{n-1}|^p\|^{\frac{1}{p}}_{\infty}\leq cp\|x\|_{BMO}.
\end{align}
 A duality argument yields
\begin{align}
\|\mathbb E_n|x-x_{n-1}|^{p}\|^{\frac 1p}_{\infty}&=\sup_{b\in
L_\infty(\mathcal{F}_n),\|b\|_1\leq1}\left(\int|x-x_{n-1}|^pbd\mathbb{P}\right)^{\frac 1p}\label{motivation-1}\\
&=\sup_{b\in
L_\infty(\mathcal{F}_n),\|b\|_p\leq1}\|(x-x_{n-1})b\|_p.\label{motivation-2}
\end{align}
Furthermore, by the extreme point property of $L_1(\mathcal{F}_n)$
and (\ref{motivation-1}), the John-Nirenberg theorem
(\ref{commu-jn-1}) can be rewritten as follows
\begin{align}\label{commu-jn-2}
\|x\|_{BMO}\leq\sup_{n}\sup_{E\in\mathcal{F}_n}\frac{1}{{\mathbb{P}(E)}^{1/p}}\|(x-x_{n-1})\mathds{1}_E\|_p\leq
cp\|x\|_{BMO}.
\end{align}
 Accordingly, \eqref{commu-jn-exp} can be reformulated as: For any $n\ge1$,  $E\in\mathcal{F}_n$ and $\lambda>0$
  \begin{align}\label{commu-jn-exp-2}
\frac{1}{\mathbb{P}(E)}\,\mathbb{P}\big(\big\{\omega\in E\,:\,
|x(\omega)-x_{n-1}(\omega)|>\lambda\big\}\big)\le
c_2\exp(-c_1\lambda/\|x\|_{BMO}).
\end{align}

Junge and Musat \cite{JuMu07} proved a noncommutative version of John-Nirenberg theorem corresponding to (\ref{motivation-2}). To state their result we need fix some notation. Let $\M$ be a von Neumann algebra  with a
normal faithful tracial state $\tau$.  Let
$(\M_n)_{n\geq1}$ be an increasing sequence of von Neumann
subalgebras of $\M$ such that the union of $\M_n$'s is $w^*$-dense
in $\M$. Let $\mathcal{E}_n$ be the conditional expectation of $\M$
with respect to $\M_n$.  Define
$$\|x\|_{\BMO^c}=\sup_{n\geq1}
\|\mathcal{E}_n|x-x_{n-1}|^2\|^{\frac 12}_{\infty}$$ and
 $$\BMO(\M)=\{x\in L_1(\M): \|x\|_{\BMO}<\infty\}$$
with
$$\|x\|_{\BMO}=\max\{\|x\|_{\BMO^c},\|x^*\|_{\BMO^c}\}.$$
 Then Junge and Musat's John-Nirenberg inequality reads as follows: There
exists an absolute constant $c$ such that for all $2\leq p<\infty$,
$$\|x\|_{\BMO}\leq \mathcal{B}_p(x)\leq cp\|x\|_{\BMO},$$
where
\begin{align*}
\mathcal{B}_p(x)=\max\{
&\sup_n\sup_{b\in\M_n,\|b\|_p\leq1} \|(x-x_{n-1})b\|_{p},\\
&\sup_n\sup_{b\in \M_n,\|b\|_p\leq1,} \|b(x-x_{n-1})\|_{p}\}.
\end{align*}

\renewcommand{\thetheorem}{\Alph{theorem}}

However, this theorem does not correspond to the commonly used form
of the classical John-Nirenberg inequality. On the other hand, it
does not hold (see Remark \ref{counter example2} for a
counterexample) when considering $\BMO^c(\M)$ or $\BMO^r(\M)$
separately.  The first purpose of this paper is to remedy these
aspects of Junge and Musat's theorem. The following is one of our
main results. We refer to the next section for all spaces and
notations used below. $\mathcal{P}(\M)$ denotes the set of all
projections of $\M$.

\begin{theorem}\label{fcb}
For $0<p<\infty$, we have
$$\alpha^{-1}_p\|x\|_{\BMO}\leq\mathcal{PB}_p(x)\leq\beta_p\|x\|_{\BMO},$$
where
\begin{align*}
\mathcal{PB}_p(x)=\max\{
&\sup_n\sup_{e\in \mathcal{P}(\M_n)} \|(x-x_{n-1})\frac{e}{(\tau(e))^{1/p}}\|_{p},\\
&\sup_n\sup_{e\in \mathcal{P}(\M_n)}
\|\frac{e}{(\tau(e))^{1/p}}(x-x_{n-1})\|_{p}\}.
\end{align*}
The two constants $\alpha_p$ and $\beta_p$ have the following
properties
 \begin{enumerate}[\rm (i)]

\item $\alpha_p=1$ for $2\leq p<\infty$;

\item $\alpha_p\leq C^{1/p-1/2}$ for $0<p<2$;

\item $\beta_p\leq cp$ for $2\leq p<\infty$;

\item  $\beta_p=1$ for $0<p<2.$
 \end{enumerate}
\end{theorem}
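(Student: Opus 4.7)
The plan is to anchor the theorem at $p=2$, extend to $p\neq 2$ via a monotonicity of $p\mapsto\mathcal{PB}_p^c$ which gives two of the four constants for free, and then reduce the remaining two to a crude column JN inequality combined with an extreme-point decomposition in $L_p$ for $0<p\leq 1$. Throughout I work with the column part $\mathcal{PB}_p^c$ (sup over $\|(x-x_{n-1})e/\tau(e)^{1/p}\|_p$); the row estimate follows by applying the column one to $x^*$, and taking max yields the theorem.

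\emph{Base case and monotonicity.} For $e\in\mathcal{P}(\M_n)$, cyclicity of $\tau$ together with the module property of $\mathcal{E}_n$ gives $\|(x-x_{n-1})e\|_2^2=\tau(e|x-x_{n-1}|^2)=\tau(e\,\mathcal{E}_n|x-x_{n-1}|^2)$, and the spectral identity $\|A\|_\infty=\sup_e\tau(eA)/\tau(e)$ for $A\in\M_n^+$ then yields $\mathcal{PB}_2^c(x)=\|x\|_{\BMO^c}$; so $\alpha_2=\beta_2=1$. For the general case, note that for $y=x-x_{n-1}$ and $e\in\mathcal{P}(\M_n)$, $|ye|$ has support in the compressed algebra $e\M e$, on which $\tau(e)^{-1}\tau$ is a tracial probability state. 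Noncommutative H\"older in this compressed algebra gives $\|ye\|_p/\tau(e)^{1/p}\leq\|ye\|_q/\tau(e)^{1/q}$ for $0<p\leq q$, so $p\mapsto\mathcal{PB}_p^c(x)$ is non-decreasing. Combined with the $p=2$ identity, this immediately yields $\alpha_p=1$ for $p\geq 2$ and $\beta_p=1$ for $0<p\leq 2$.

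\emph{Upper bound for $p\geq 2$.} I would establish the \emph{crude column} JN inequality $\sup_{b\in\M_n,\,\|b\|_p\leq 1}\|(x-x_{n-1})b\|_p\leq cp\,\|x\|_{\BMO^c}$ for $p\geq 2$, following Junge-Musat's approach but keeping track of only the column square function so that the resulting bound involves $\|\cdot\|_{\BMO^c}$ alone. Since normalized projections $e/\tau(e)^{1/p}$ have $L_p$-norm $1$, $\mathcal{PB}_p^c$ is dominated by this crude sup, giving $\beta_p\leq cp$.

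\emph{Lower bound for $0<p<2$.} This is the main obstacle. First I would prove a \emph{crude column} lower bound
$$\|x\|_{\BMO^c}\leq C^{1/p-1/2}\sup_{b\in\M_n,\,\|b\|_p\leq 1}\|(x-x_{n-1})b\|_p,$$
to be established in parallel with the $p\geq 2$ crude version; this is the technically hardest step, since for $p<1$ the usual $L_{p'}$-duality is unavailable and one likely has to invoke a noncommutative Davis-type splitting of $x$ to reduce to the $p=2$ column estimate. Then, for $0<p\leq 1$, the extreme-point structure of the positive unit ball of $L_p(\M_n)$ lets me replace the sup over $b$ by a sup over projections: writing a positive $b\in\M_n$ with $\|b\|_p\leq 1$ via its spectral decomposition as $b=\sum_i\alpha_ip_i$ and re-expressing this as $b=\sum_i\lambda_i\cdot p_i/\tau(p_i)^{1/p}$ with $\lambda_i=\alpha_i\tau(p_i)^{1/p}$, a direct computation gives $\sum_i\lambda_i^p=\|b\|_p^p\leq 1$; the $p$-triangle inequality of $L_p(\M)$ ($0<p\leq 1$) then yields
$$\|(x-x_{n-1})b\|_p^p\leq\sum_i\lambda_i^p\,\|(x-x_{n-1})p_i/\tau(p_i)^{1/p}\|_p^p\leq\mathcal{PB}_p^c(x)^p,$$
so the crude sup reduces to $\mathcal{PB}_p^c$. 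For $1<p<2$, where the $p$-triangle reduction above fails, I would interpolate between $p=1$ (just handled, contributing $C^{1/2}$) and $p=2$ (trivial, contributing $1$); the exponent $1/p-1/2$ matches the natural interpolation parameter on the $L_p$ scale.
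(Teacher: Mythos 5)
Your base step and monotonicity are fine: $\mathcal{PB}_2(x)=\|x\|_{\BMO}$ and the H\"older argument on the compressed algebra $e\M e$ correctly give $\alpha_p=1$ for $p\geq2$ and $\beta_p=1$ for $p<2$, and the reduction of the crude supremum to projections via spectral decomposition and the $p$-triangle inequality for $0<p\leq1$ is essentially the paper's own device (note only that you treat positive $b$; a general $b$ needs the polar decomposition $b=u|b|$ and the observation $\|(x-x_{n-1})up_i\|_p\leq\|(x-x_{n-1})q_i\|_p$ with $q_i=up_iu^*$, $\tau(q_i)\leq\tau(p_i)$). The genuine problem is your key intermediate claim for $p\geq2$: the ``crude \emph{column} John--Nirenberg inequality'' $\sup_{b\in\M_n,\|b\|_p\leq1}\|(x-x_{n-1})b\|_p\leq cp\|x\|_{\BMO^c}$ is false, and no bookkeeping of ``only the column square function'' in the Junge--Musat argument can produce it. Remark \ref{counter example2} of the paper gives the counterexample: for $x=\sum_{k=1}^n r_k\otimes e_{1k}$ one has $\|x\|_{\BMO^c}=1$ while already the single choice $b=\mathds{1}$ (equally $e=\mathds{1}$ in $\mathcal{PB}_p$) gives $\sup_m\|x-x_{m-1}\|_p=(n-1)^{1/2}n^{-1/p}\to\infty$ for any $p>2$. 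This is precisely why the theorem is stated with the two-sided norm $\|x\|_{\BMO}$, and why the column/row versions in the paper (Theorems \ref{c-j-n-b} and \ref{c-j-n-B}) carry the $\mathsf{h}^c_p$, resp.\ $\mathcal{H}^c_p$, norm on the left instead of the plain $L_p$ norm. The repair is to prove the crude bound in the form $\mathcal{B}_p(x)\leq cp\|x\|_{\BMO}$ (the paper's Theorem \ref{c-jm-j-n-b} plus Corollary \ref{c-jm-j-n-B}, via $\BMO\simeq\mathsf{bmo}$, the two endpoint estimates $L_\infty\to\mathsf{bmo}$ and $L_2\to L_2$, and the interpolation $(L_2,\mathsf{bmo})_\theta\subset L_p$ with constant $cp$); your final ``apply to $x^*$ and take the max'' assembly then still works because $\|x^*\|_{\BMO}=\|x\|_{\BMO}$.

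The same defect undermines your lower bound for $0<p<2$: the ``crude column lower bound'' $\|x\|_{\BMO^c}\leq C^{1/p-1/2}\sup_b\|(x-x_{n-1})b\|_p$ is not proved (the appeal to a ``Davis-type splitting'' is not an argument), and the natural proof --- the paper's one --- is an interpolation bootstrap: view $x-x_n$ as an operator $L_p(\M_n)\to L_p(\M)$, interpolate with an exponent $p_1>2$, and close the loop using the $p_1$-upper bound; that closing step needs exactly the upper estimate at $p_1>2$, which by the counterexample above is only available with the full norm $\|x\|_{\BMO}$ (or $\|x\|_{\mathsf{bmo}}$) on the right. So a column-only crude lower bound is at best unproven and in any case unnecessary: run the bootstrap for the two-sided quantity to get $\|x\|_{\BMO}\leq C^{1/p-1/2}\mathcal{B}_p(x)$ for $0<p<2$, then your extreme-point reduction for $0<p\leq1$ and monotonicity $\mathcal{PB}_1\leq\mathcal{PB}_p$ for $1<p<2$ (no extra interpolation needed) give $\|x\|_{\BMO}\leq\alpha_p\,\mathcal{PB}_p(x)$, which is the statement of the theorem. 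With these corrections your outline coincides with the paper's route (crude two-sided inequality by interpolation, then passage to projections); as written, however, it rests on a lemma the paper itself refutes.
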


This result goes beyond Junge/Musat's result in two aspects. First
we extend their result to all $0<p<\infty$. Second, the $b$'s in the
definition of $\mathcal{B}_p(\cdot)$ are reduced to projections
$e$'s in $\mathcal{PB}_p(\cdot)$, which corresponds exactly to the
form (\ref{commu-jn-2}) in the classical case. Furthermore, the
optimal constants $\beta_p$ in Theorem A enable us to formulate
John-Nirenberg inequality that corresponds to the form
(\ref{commu-jn-exp-2}). That is, let $x\in \BMO(\M)$, then for all
natural numbers $n\geq1$, all $e\in \mathcal{P}(\M_n)$ and for all
$\lambda>0$, we have
$$\frac{1}{\tau(e)}\tau(\mathds{1}_{(\lambda,\infty)}(|(x-x_{n-1})e|)+\mathds{1}_{(\lambda,\infty)}(|e(x-x_{n-1})|))\leq 4\exp(-\frac{c\lambda}{\|x\|_{\BMO}})$$
with $c$ an absolute constant.

By the essentially same idea, we establish similar results for
$\BMO^c(\M)$ and $\BMO^r(\M)$ separately, but only with $2\leq
p<\infty$ (see Remark \ref{counter example1}).

We now turn to the second objective of this paper: the atomic
decomposition of different noncommutative Hardy spaces. Let us
recall the 2-atomic decomposition obtained in \cite{BCPY}. An
element $a\in L_1(\M)$ is said to be a $(1,2)_c$-atom with respect
to $(\M_n)_{n\geq1}$, if there exist $n\geq1$ and $e\in
\mathcal{P}(\M_n)$ such that

(i) $\E_n(a)=0$; \ (ii)$ae=a$; (iii) $\|a\|_2\leq(\tau( e))^{-1/2}$.

\noindent The atomic Hardy space $\mathsf h_{1,\rm{at}}^c(\M)$ is
defined as the space of all $x\in L_1(\M)$, such that the following
$\|\cdot\|_{\mathsf h_{1,\rm{at}}^c}$ norm is finite,
$$\|x\|_{\mathsf h_{1,\rm{at}}^c}=\|\E_1x\|_1+\inf\sum_j |\lambda_j|.$$
Here the infimum is taken for possible decompositions
$x-\E_1x=\sum_j \lambda_ja_j$ with $\lambda_j\in {\Bbb C}$,
$a_j$ being $(1,2)_c$-atom. It is proved in \cite{BCPY}
that $x\in \mathsf{h}_1^c(\M)$ if and only if $x\in
\mathsf{h}_{1,\rm{at}}^c(\M)$ and
$$\|x\|_{\mathsf h_1^c}\simeq \|x\|_{\mathsf h_{1,\rm{at}}^c}.$$
Together with the equivalence ${\mathcal H}_1^c(\M)=\mathsf
h_1^c(\M)+\mathsf h_1^d(\M)$, the authors of \cite{BCPY} also
obtained a $2$-atomic decomposition for ${\mathcal H}_1^c(\M)$.

Let us briefly recall the argument used in \cite{BCPY}. The dual
space of $\mathrm h_{1,\mathrm{at}}^c(\M)$ can be described as
$$\Lambda^c(\M)=\{x\in L_2(\M):\|x\|_{\Lambda^c}<\infty\}$$
with
$$\|x\|_{\Lambda^c}=\max\{\|\E_1x\|_\infty,\quad\sup_{n\geq1}\sup_{e\in\mathcal{P}_n}
(\frac1{\tau (e)}\tau(e|x-x_n|^2))^{\frac12}\}.$$ Actually, the
supremum in the definition above can be taken for all $b\in
L_1(\M_n)$ since the extreme points of the unit ball of
$L_1({\M}_n)$ are all multiples of projections. Therefore,
 \begin{eqnarray}
 \|x\|_{\Lambda^c}&=&\max\{\|\E_1x\|_\infty,\quad\sup_{n\geq1}\sup_{b\in \M_n}
(\frac1{\|b\|_1}\tau(b|x-x_n|^2))^{\frac12}\} \label{key}\\
&=&\max\{\|\E_1x\|_\infty,\quad\sup_{n\geq1}\|\E_n|x-x_n|^2\|_\infty^{\frac12}\}\nonumber\\
&=&\|x\|_{\mathsf {bmo}^c}.\nonumber
\end{eqnarray}
Then the duality
$\mathsf h_1^c(\M)=\mathsf {bmo}^c(\M)$ yields $\mathsf
h_{1,\rm{at}}^c(\M)=\mathsf h_1^c(\M)$.

It is well known in the classical theory that $2$-atoms in the
previous atomic decomposition can be replaced by $q$-atoms for any
$1<q\leq\infty$. Let us recall these atoms in the commutative case.
A function $a\in L_1(\Omega)$ is said to be a $q$-atom if there
exist $n\geq1$ and $E\in \mathcal{F}_n$ such that

(i) $\mathbb{E}_na=0$; \ (ii) $\{a\neq0\}\subset E$;\ (iii)
$\|a\|_q\leq\mathbb P(E)^{-1+\frac 1q}.$

\noindent We refer to \cite{Wei94} for more information.

The main difficulty to obtain $q$-atomic decompositions in the
noncommutative case is that the key equivalence (\ref{key}) no
longer holds if one replaces the power indices $2$ by  $q'\neq 2$,
$1\leq q'<\infty$. We overcome this obstacle by Theorem \ref{fcb}.

\begin{theorem}\label{catomicdecomposition}
For all $1<q\leq\infty$,
$$\mathcal{H}_1(\M)=\mathsf{h}^{\mathrm{at}}_{1,q}(\M)$$
with equivalent norms. Here $\mathsf{h}^{\mathrm{at}}_{1,q}(\M)$ is
the $q$-atomic Hardy spaces with its atoms defined as: $a\in
L_1(\M)$ is said to be a $(1,q)$-atom with respect to
$(\M_n)_{n\geq1}$, if there exist $n\geq1$ and a projection
$e\in\mathcal{P}(\M_n)$ such that
\begin{enumerate}[\rm(i)]
\item $\E_n(a)=0$;
\item $r(a)\leq e$ or $l(a)\leq e$;
\item $\|a\|_{q}\leq(\tau(e))^{-\frac{1}{q'}}$.
\end{enumerate}
\end{theorem}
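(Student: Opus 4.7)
The plan is to reduce the atomic identification, via duality, to Theorem A. I split the problem into a column and a row version: let $\mathsf{h}_{1,q}^{\mathrm{at},c}(\M)$ (resp.\ $\mathsf{h}_{1,q}^{\mathrm{at},r}(\M)$) be the atomic Hardy space whose atoms satisfy (i), (iii) and $r(a)\le e$ (resp.\ $l(a)\le e$). Once $\mathsf{h}_{1,q}^{\mathrm{at},c}(\M)=\mathsf{h}_1^c(\M)$ and its row analogue are established, the theorem follows from the standard decomposition $\mathcal H_1(\M)=\mathsf{h}_1^c(\M)+\mathsf{h}_1^r(\M)+\mathsf{h}_1^d(\M)$, the diagonal piece being absorbed by the $\E_1$-term in the atomic norm. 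Since the two cases are symmetric I only discuss the column one.

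The inclusion $\mathsf{h}_{1,q}^{\mathrm{at},c}\hookrightarrow \mathsf{h}_1^c$ is the easier direction. For a $(1,q)_c$-atom $a$ attached to level $n$ and projection $e\in\mathcal{P}(\M_n)$, the conditions $\E_n(a)=0$ and $a=ae$ propagate to the martingale differences as $d_m(a)=d_m(a)\,e$ for every $m>n$ and $d_m(a)=0$ for $m\le n$. Hence the column square function $s_c(a)$ satisfies $s_c(a)=s_c(a)\,e$, and noncommutative H\"older combined with Burkholder--Gundy gives
\begin{align*}
\|s_c(a)\|_1\le \|s_c(a)\|_q\,\tau(e)^{1/q'}\lesssim \|a\|_q\,\tau(e)^{1/q'}\lesssim 1
\end{align*}
for $1<q<\infty$, where (iii) is used in the last step. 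The case $q=\8$ reduces to any fixed $q_0\in(1,\8)$ via the trivial observation (another application of H\"older) that every $(1,\8)_c$-atom is, up to a constant, a $(1,q_0)_c$-atom with the same $e$.

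The converse $\mathsf{h}_1^c\hookrightarrow \mathsf{h}_{1,q}^{\mathrm{at},c}$ is the core of the proof and goes by duality. A direct computation shows that the dual of $\mathsf{h}_{1,q}^{\mathrm{at},c}$ under the pairing $\tau(y^*x)$ is the Lipschitz-type space $\Lambda^c_{q'}$ defined by
\begin{align*}
\|y\|_{\Lambda^c_{q'}}=\max\Bigl\{\|\E_1 y\|_\8,\ \sup_{n\ge1}\sup_{e\in\mathcal{P}(\M_n)} \tau(e)^{-1/q'}\,\|(y-y_n)\,e\|_{q'}\Bigr\}.
\end{align*}
Indeed, for an atom $a$ the identities $\E_n(a)=0$ and $a=ae$ yield $\tau(y^*a)=\tau(((y-y_n)e)^*a)$, and H\"older controls this by $\tau(e)^{-1/q'}\|(y-y_n)e\|_{q'}$. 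Now Theorem A, applied with $p=q'\in[1,\8)$ together with the standard equivalence between the $y_n$- and $y_{n-1}$-based column BMO seminorms, identifies $\Lambda^c_{q'}$ with the column BMO. Combined with the duality $(\mathsf{h}_1^c)^*\simeq\mathsf{bmo}^c$, the continuous inclusion from the previous step has an isomorphism as its adjoint; a standard closed-range argument then promotes this to the desired norm equivalence and simultaneously provides an atomic representation of every element of $\mathsf{h}_1^c$.

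The main obstacle is exactly the issue flagged just before the statement: for $q'\ne 2$ the $L_1(\M_n)$ extreme-point identity used in \cite{BCPY} fails, and the identification of the projection-based $\Lambda^c_{q'}$ with the genuine BMO norm must be supplied by Theorem A. Once that identification is in hand, the rest is routine bookkeeping of the duality.
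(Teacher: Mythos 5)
Your reduction to a column statement $\mathsf{h}_{1,q}^{\mathrm{at},c}(\M)=\mathsf{h}_1^c(\M)$ (and its row twin) is precisely the step that fails, and it is the pitfall the paper is organized around. First, the ``easy'' direction already breaks for $1<q<2$: the bound $\|s_c(a)\|_q\lesssim\|a\|_q$ is a one-sided Burkholder/Rosenthal inequality which in the noncommutative setting holds only for $2\le q<\infty$; for $1<q<2$ one only has the decomposition form $L_q(\M)=\mathsf{h}^c_q+\mathsf{h}^r_q+\mathsf{h}^d_q$, so an $L_q$-normalized atom with $r(a)\le e$ need not lie in $\mathsf{h}^c_1$ uniformly. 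A concrete witness is the Junge--Musat element of Remark \ref{counter example2} transposed: $a=\sum_{k\ge2}r_k\otimes e_{k1}$ (suitably normalized in $L_q$, with $e=\mathds{1}$) has $\|a\|_q\simeq n^{1/2-1/q}$ but $\|s_c(a)\|_1\simeq n^{1/2}$, so its $\mathsf{h}^c_1$-norm blows up like $n^{1/q-1/2}$. Second, and more fundamentally, your converse direction rests on the claim that Theorem A identifies the column-only space $\Lambda^c_{q'}$ (defined with the $L_{q'}$-norm of $(y-y_n)e$) with column BMO. Theorem A does no such thing: $\mathcal{PB}_p$ is the \emph{maximum} of the column and row quantities and is equivalent to the two-sided norm $\|\cdot\|_{\BMO}$; the paper's one-sided John--Nirenberg results (Theorems \ref{c-j-n-b} and \ref{f-j-n-b}) are deliberately formulated with $\mathsf{h}^c_p$-norms rather than $L_p$-norms, and Remark \ref{counter example2} exhibits exactly the failure of the $L_p$-based column-only statement. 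So the identification $\Lambda^c_{q'}\simeq\mathsf{bmo}^c$ you invoke is false for $q'>2$ and is unsupported by anything you cite for $q'<2$. (A smaller slip: the diagonal part of $\mathsf{h}_1$ is not ``absorbed by the $\E_1$-term''; the atomic space must contain a separate summand $y$ with $\sum_j\|dy_j\|_1<\infty$.)

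The paper avoids the column/row split altogether. Both kinds of atoms ($r(a)\le e$ or $l(a)\le e$) are placed in a single atomic space, and Lemma \ref{hat-to-h1} shows each atom is uniformly bounded in the \emph{full} space $\mathsf{h}_1(\M)$: pairing against $x\in\mathsf{bmo}(\M)$ and using H\"older, one needs $\|(x-x_n)e\|_{q'}\le cq'\,\tau(e)^{1/q'}\|x\|_{\mathsf{bmo}}$, which is exactly the two-sided fine John--Nirenberg inequality (Corollary \ref{f-jm-j-n-b}) --- the full $\mathsf{bmo}$-norm on the right is what makes the estimate work for both support conditions. The reverse inclusion is then obtained for $1<q<\infty$ by a duality argument in the style of Theorem \ref{crude h1 isometry}, using $(\mathsf{h}_1)^*=\mathsf{bmo}$ and again Corollary \ref{f-jm-j-n-b}, and for $q=\infty$ by the Krein--Smulian/bipolar argument of Theorem \ref{fine h1 isometry} carried out with the spaces $L^{\mathcal{P}}_1(L_\infty)$ and $L^{\mathcal{P}}_\infty(L_1)$. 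If you want a genuinely one-sided atomic decomposition of $\mathsf{h}^c_1(\M)$, the atoms must be normalized in $\mathsf{h}^c_q$ rather than in $L_q$ (Definition \ref{defi-f-atom} and Theorem \ref{fine h1 isometry}); with $L_q$-normalized atoms only the mixed statement of Theorem B is available.
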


This is exactly the noncommutative analogue of the classical atomic
decomposition. Moreover, applying the conditional version of
John-Nirenberg inequality for $\BMO^c(\M)$ (resp. $\BMO^r(\M)$), we
get a $q$-atomic decomposition for $\mathsf{h}^c_{1}(\M)$ (resp.
$\mathsf{h}^r_{1}(\M)$) with $1<q\leq\infty$ (see Theorem \ref{fine
h1 isometry}), hence recover the $2$-atomic decomposition of
\cite{BCPY} mentioned above.

As in the classical case (see e.g. \cite{CRTW}), we also find some
applications of our results. Indeed, the John-Nirenberg inequality
and atomic decomposition built in this paper have been used in
\cite{HSMP} to establish $H_1\rightarrow L_1$ boundedness of
noncommutative paraproducts or martingale transforms with
noncommuting symbols or coefficients.

Our paper is organized as follows. Section 2 is on preliminaries and
notation. All the results on John-Nirenberg inequality will be
presented in section 3. Section 4 is devoted to the atomic
decomposition of Hardy spaces. In section 5, we answer Junge/Musat's
question in \cite{JuMu07} which implies that the John-Nirenberg
inequality in the classical sense does not hold any more in the
noncommutative setting.

In this article, the letter $c$ always denotes an absolute positive
constant, while $C$ an absolute constant bigger than 1. They may
vary from lines to lines.

\addtocounter{theorem}{-1}
\renewcommand{\thetheorem}{\arabic{section}.\arabic{theorem}}

\section{Preliminaries and notations}

Throughout this paper, we will work on a von Neumann algebra $\M$ with a
normal faithful normalized trace $\tau$. For all $0<p\leq\infty$,
let $L_p(\M,\tau)$ or simply $L_p(\M)$ be the associated
noncommutative $L_p$ spaces. For $x\in L_p(\M)$ we denote the right
and left supports of $x$ by $r(x)$ and $l(x)$ respectively. $r(x)$
(resp. $l(x)$) is also the least projection $e$ such that $xe=x$
(resp. $ex=x$). If $x$ is selfadjoint, $r(x)=l(x)$, denoted by
$s(x)$. We mainly refer the reader to \cite{PiXu03} for more
information on noncommutative $L_p$ spaces.

Let us recall some basic notions on noncommutative martingales. Let
$(\M_n)_{n\geq1}$ be an increasing sequence of von Neumann
subalgebras of $\M$ such that the union of the $\M_n$'s is
$w^*$-dense in $\M$. Let $\mathcal{E}_n$ be the conditional
expectation of $\M$ with respect to $\M_n$. A sequence $x=(x_n)$ in
$L_1(\M)$ is called a noncommutative martingale with respect to
$(\M_n)_{n\geq1}$ if $\mathcal{E}_n(x_{n+1})=x_n$ for every
$n\geq1$. If in addition, all the $x_n$'s are in $L_p(\M)$ for some
$1\leq p\leq\infty$, $x$ is called an $L_p$-martingale. In this case
we set
$$\|x\|_p=\sup_{n\geq1}\|x_n\|_p.$$
If $\|x\|_p<\infty$, $x$ is called a bounded $L_p$-martingale.

Let $x=(x_n)$ be a noncommutative martingale with respect to
$(\M_n)_{n\geq1}$. Define $dx_n=x_n-x_{n-1}$ for $n\geq1$ with the
convention that $x_0=0$ and $\mathcal{E}_0=\mathcal{E}_1$. The
sequence $dx=(dx_n)_n$ is called the martingale difference sequence
of $x$. In the sequel, for any operator $x\in L_1(\M)$ we denote
$x_n=\mathcal{E}_n(x)$ for $n\geq1$.

The sequence $(\M_n)_{n\geq1}$ will be fixed throughout the paper.
All martingales will be with respect to $(\M_n)_{n\geq1}$. Let
$1\leq p<\infty$. Define $\mathcal{H}^c_p$ (resp. $\mathcal{H}^r_p$)
as the completion of all finite $L_p$-martingales under the norm
$\|x\|_{\mathcal{H}^c_p}=\|S_c(x)\|_p$ (resp.
$\|x\|_{\mathcal{H}^r_p}=\|S_r(x)\|_p$), where $S_c(x)$ and $S_r(x)$
are defined as
$$S_c(x)=\big(\sum_{k\geq1}|dx_k|^2\big)^{1/2},\quad S_r(x)=S_c(x^*).$$
The noncommutative martingale Hardy spaces $\mathcal{H}_p(\M)$ are
defined as follows: if $1\leq p<2$,
$$\mathcal{H}_p(\M)=\mathcal{H}^c_p(\M)+\mathcal{H}^r_p(\M)$$
equipped with the norm
$$\|x\|_{\mathcal{H}_p}=\inf_{x=y+z}\{\|y\|_{\mathcal{H}^c_p}+\|z\|_{\mathcal{H}^r_p}\}.$$
When $2\leq p<\infty$,
$$\mathcal{H}_p(\M)=\mathcal{H}^c_p(\M)\cap\mathcal{H}^r_p(\M)$$
equipped with the norm
$$\|x\|_{\mathcal{H}_p}=\max\{\|x\|_{\mathcal{H}^c_p},\|x\|_{\mathcal{H}^r_p}\}.$$

The space $\BMO^c$ is defined as
$$\BMO^c(\M)=\{x\in L_1(\M): \|x\|_{\BMO^c}<\infty\}$$
where
$$\|x\|_{\BMO^c}=\sup_{n\geq1}
\|\mathcal{E}_n|x-x_{n-1}|^2\|^{1/2}_{\infty},$$ and
$$\BMO^r(\M)=\{x:x^*\in\BMO^c(\M)\}.$$
Define
$$\BMO(\M)=\BMO^c(\M)\cap\BMO^r(\M)$$
equipped with the norm
$$\|x\|_{\BMO}=\max\{\|x\|_{\BMO^c},\|x\|_{\BMO^r}\}.$$

Pisier and Xu \cite{PiXu97} proved the two fundamental results:
$\mathcal{H}_p(\M)=L_p(\M)$ and $(\mathcal{H}_1(\M))^*=\BMO(\M)$.
Their work triggered a rapid development of the noncommutative
martingale theory.

We will also work on the conditional version of Hardy
and BMO spaces developed in \cite{JuXu03}. Let $x=(x_n)_{n\geq1}$ be
a finite martingale in $L_2(\M)$. We set
$$s_c(x)=\big(\sum_{k\geq1}\mathcal{E}_{k-1}|dx_k|^2\big)^{1/2}\quad
\mbox{and}\quad s_r(x)=s_c(x^*).$$
Let $0<p<\infty$. Define $\mathsf{h}^c_p(\M)$ (resp. $\mathsf{h}^r_p(\M)$)
as the completion of all finite $L_{\infty}$-martingales under the
(quasi-)norm $\|x\|_{\mathsf{h}^c_p}=\|s_c(x)\|_p$ (resp.
$\|x\|_{\mathsf{h}^r_p}=\|s_r(x)\|_p$). Define $\mathsf{h}^d_p(\M)$
as the subspace of $\ell_p(L_p(\M))$ consisting of all martingale
difference sequences, where $\ell_p(L_p(\M))$ is the space of all
sequences $a=(a_n)_{n\geq1}$ in $L_p(\M)$ such that
$$\|a\|_{\ell_p(L_p(\M))}=\big(\sum_{n\geq1}\|a_n\|^p_p\big)^{1/p}<\infty$$
with the usual modification for $p=\infty$. The noncommutative conditional martingale Hardy spaces are defined
as follows: if $0<p<2$,
$$\mathsf{h}_p(\M)=\mathsf{h}^c_p(\M)+\mathsf{h}^r_p(\M)+\mathsf{h}^d_p(\M)$$
equipped with the (quasi-)norm
$$\|x\|_{\mathsf{h}_p}=\inf_{x=y+z+w}\{\|y\|_{\mathsf{h}^c_p}+\|z\|_{\mathsf{h}^r_p}+\|w\|_{\mathsf{h}^d_p}\}.$$
When $2\leq p<\infty$,
$$\mathsf{h}_p(\M)=\mathsf{h}^c_p(\M)\cap\mathsf{h}^r_p(\M)\cap\mathsf{h}^d_p(\M)$$
equipped with the norm
$$\|x\|_{\mathsf{h}_p}=\max\{\|x\|_{\mathsf{h}^c_p},\|x\|_{\mathsf{h}^r_p},\|x\|_{\mathsf{h}^d_p}\}.$$

The space $\mathsf{bmo}^c$ is defined as
$$\mathsf{bmo}^c(\M)=\{x\in L_1(\M): \|x\|_{\mathsf{bmo}^c}<\infty\}$$
where
$$\|x\|_{\mathsf{bmo}^c}=\max\left\{\|\mathcal{E}_1(x)\|_{\infty},\quad\sup_{n\geq1}
\|\mathcal{E}_n|x-x_{n}|^2\|^{1/2}_{\infty}\right\}.$$ Let
$$\mathsf{bmo}^r(\M)=\{x:x^*\in\mathsf{bmo}^c(\M)\}.$$ Let
$\mathsf{bmo}^d(\M)$ be the subspace of
$\ell_{\infty}(L_{\infty}(\M))$ consisting of all martingale
difference sequences.  Note that $\mathsf{bmo}^d(\M)=\mathsf{h}^d_\infty(\M)$. Define
$$\mathsf{bmo}(\M)=\mathsf{bmo}^c(\M)\cap\mathsf{bmo}^r(\M)\cap\mathsf{bmo}^d(\M)$$
equipped with the norm
$$\|x\|_{\mathsf{bmo}}=\max\{\|x\|_{\mathsf{bmo}^c},\|x\|_{\mathsf{bmo}^r},\|x\|_{\mathsf{bmo}^d}\}.$$
We refer to \cite{JuXu03}, \cite{JuXu08}, \cite{Ran02},
\cite{Ran07}, \cite{JuMe10}, \cite{Per09} for more information on
these spaces.

\addtocounter{theorem}{-1}
\renewcommand{\thetheorem}{\arabic{section}.\arabic{theorem}}

\section{John-Nirenberg inequality}

\subsection{A crude version}

\begin{definition} For $0<p<\infty$, we define
 \begin{enumerate}[\rm(i)]
 \item $$\mathsf{bmo}^c_p(\M)=\big\{x\in
L_1(\M):\|x\|_{\mathsf{bmo}^c_p}<\infty\big\}$$ with
$$\|x\|_{\mathsf{bmo}^c_p}=\max\big\{\|\mathcal{E}_1(x)\|_{\infty},\quad\sup_n\sup_{a\in
\M_n,\|a\|_p\leq1,} \|(x-x_{n})a\|_{\mathsf{h}^c_p}\big\};$$

\item $$\mathsf{bmo}^r_p(\M)=\{x:x^*\in\mathsf{bmo}^c_p(\M)\};$$

\item
$$\mathsf{bmo}_p(\M)=\mathsf{bmo}^c_p(\M)\cap\mathsf{bmo}^r_p(\M)\cap\mathsf{bmo}^d(\M)$$
equipped with the (quasi-)norm
$$\|x\|_{\mathsf{bmo}_p}=\max\{\|x\|_{\mathsf{bmo}^c_p},\|x\|_{\mathsf{bmo}_p^r},\|x\|_{\mathsf{bmo}^d}\}.$$
  \end{enumerate}
\end{definition}

\begin{remark}
When $p=2$, these are exactly the spaces
$\mathsf{bmo}^c(\M)$, $\mathsf{bmo}^r(\M)$ and $\mathsf{bmo}(\M)$.
\end{remark}

Below is our first version of the column (resp. row)  John-Nirenberg inequality.

 \begin{theorem}\label{c-j-n-b}
For all $0<p<\infty$, there exist two constants $\alpha_p$ and
$\beta_p$ such that
$$\alpha^{-1}_p\|x\|_{\mathsf{bmo}^c}\leq\|x\|_{\mathsf{bmo}^c_p}\leq\beta_p\|x\|_{\mathsf{bmo}^c},$$
with $\alpha_p$ and $\beta_p$ satisfying
 \begin{enumerate}[\rm(i)]
 \item  $\alpha_p=1$ for $2\leq p<\infty$;

\item   $\alpha_p\leq C^{1/p-1/2}$ for $0<p<2$;

\item  $\beta_p\leq cp$ for $2\leq p<\infty$;

\item  $\beta_p=1$ for $0<p<2.$
 \end{enumerate}
 The similar inequalities hold for
$\|\cdot\|_{\mathsf{bmo}^r_p}$ and $\|\cdot\|_{\mathsf{bmo}^r}$.
\end{theorem}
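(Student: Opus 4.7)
The proof rests on a single identity, valid for every $a\in\M_n$,
\[
s_c\bigl((x-x_n)a\bigr)^{2}=a^{\ast}Sa,\qquad S:=\sum_{m>n}\mathcal{E}_{m-1}|dx_m|^{2},\qquad \mathcal{E}_nS=\mathcal{E}_n|x-x_n|^{2}=:T,
\]
so that $\|(x-x_n)a\|_{\mathsf{h}^{c}_{p}}^{2}=\|a^{\ast}Sa\|_{p/2}$. The base case $p=2$ follows immediately from $\|a^{\ast}Sa\|_{1}=\tau(aa^{\ast}T)$ by duality between $L_{1}(\M_n)$ and $L_{\infty}(\M_n)$, optimum attained at $a=e/\tau(e)^{1/2}$ for spectral projections $e$ of $T$, yielding $\alpha_{2}=\beta_{2}=1$. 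Items (iv) and (i) are then the two sides of the trace Jensen inequality for $t\mapsto t^{p/2}$. For (iv), $p/2\leq 1$ makes this function concave, hence $\tau((a^{\ast}Sa)^{p/2})\leq\tau((\mathcal{E}_n(a^{\ast}Sa))^{p/2})=\tau((a^{\ast}Ta)^{p/2})$; combined with $a^{\ast}Ta\leq\|T\|_\infty a^{\ast}a$ and monotonicity of generalized $s$-numbers under the positive ordering, this gives $\|a^{\ast}Sa\|_{p/2}\leq\|T\|_\infty\|a\|_p^{2}$. For (i), $p/2\geq 1$ makes $t^{p/2}$ convex and flips the Jensen inequality; testing with $a=e/\tau(e)^{1/p}$ for $e=\mathds{1}_{[\lambda,\infty)}(T)$ and any $\lambda<\|T\|_\infty$ gives $\|(x-x_n)a\|_{\mathsf{h}^{c}_{p}}^{2}\geq\lambda$, and letting $\lambda\uparrow\|T\|_\infty$ produces $\alpha_{p}=1$.

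The main technical step is item (iii), $\beta_p\leq cp$ for $p\geq 2$. My plan is a dyadic induction on $p$, bootstrapping from $p=2$. Write $a^{\ast}Sa=a^{\ast}Ta+a^{\ast}(S-T)a$: the first summand is again bounded by $\|T\|_\infty\|a\|_p^{2}$, now using that for $p/2\geq 1$ the quantity $\|\cdot\|_{p/2}$ is an honest norm monotone on the positive cone. The second summand is the $L_{p/2}$-norm of an $\mathcal{E}_n$-mean-zero object built from the predictable increments $v_m:=\mathcal{E}_{m-1}|dx_m|^{2}-\mathcal{E}_n|dx_m|^{2}$; after conjugation by $a$ it is controlled by a noncommutative Rosenthal/Burkholder-type inequality whose constant grows linearly in $p$, which essentially reduces the $p$-level estimate to a $(p/2)$-level one and closes the induction at $\beta_p\leq cp$.

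Finally, (ii) $\alpha_p\leq C^{1/p-1/2}$ for $p<2$ cannot be obtained from the spectral projection test used in (i) because the resulting factor $\tau(e)^{1-2/p}$ diverges as $\tau(e)\to 0$. Instead I would use complex interpolation on the family of linear maps $a\mapsto(x-x_n)a$ viewed as $L_r(\M_n)\to\mathsf{h}^{c}_{r}(\M)$, whose operator norm is precisely $\|x\|_{\mathsf{bmo}^{c}_{r}}$: interpolating between $r=p$ (bound by $\|x\|_{\mathsf{bmo}^{c}_{p}}$) and a large $r=q\geq 2$ (bound by $cq\|x\|_{\mathsf{bmo}^{c}}$ via (iii)) controls the norm at $r=2$, which by the $p=2$ case equals $\|x\|_{\mathsf{bmo}^{c}}$; optimizing in $q$ produces the advertised exponent $C^{1/p-1/2}$. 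The row analogs $\mathsf{bmo}^{r}$ and $\mathsf{bmo}^{r}_{p}$ follow by replacing $x$ by $x^{\ast}$. The hardest step is unquestionably (iii): the Rosenthal-type inequality applied to the predictable fluctuation $a^{\ast}(S-T)a$ must yield a constant growing only linearly (not quadratically) in $p$, and the induction has to be arranged so that this linearity survives at each dyadic doubling of $p$.
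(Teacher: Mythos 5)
Your treatment of the easy constants is essentially sound: the identity $s_c((x-x_n)a)^2=a^*Sa$ with $\mathcal{E}_nS=\mathcal{E}_n|x-x_n|^2$ is correct, and your trace--Jensen argument for (iv) and the spectral-projection test for (i) are valid (the paper gets the same two items more pedestrianly, by H\"older factorizations $a=a_0a_1$), modulo citing the trace-Jensen/majorization inequality $\tau(f(\mathcal{E}_n y))\ge \tau(f(y))$ for concave $f$. Your scheme for (ii) is also the same as the paper's: interpolate the maps $a\mapsto(x-x_n)a$ between $L_p\to\mathsf{h}^c_p$ and $L_{p_1}\to\mathsf{h}^c_{p_1}$ with $p_1>2$, land at $r=2$, and absorb, which yields exactly $C^{1/p-1/2}$.

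The genuine gap is item (iii), which is the heart of the theorem (the linear constant $cp$ is what later produces the exponential integrability). You do not prove it; you only propose a ``dyadic induction'' splitting $a^*Sa=a^*Ta+a^*(S-T)a$ and invoking an unspecified Rosenthal/Burkholder-type inequality with linear constants, and you yourself flag that closing the induction with linear rather than polynomial growth is unresolved. Concretely: iterating a multiplicative step constant over $\log_2 p$ doublings gives $p^{\log_2 c}$, not $cp$, unless the step constant is essentially $2$, and the bound you would need at level $p/2$ for the predictable fluctuation $a^*(S-T)a$ --- namely control by $\|x\|_{\mathsf{bmo}^c}^2\|a\|_p^2$ --- is itself a John--Nirenberg-type statement, so the induction as described risks circularity; none of this is carried out. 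The paper's proof of (iii) is quite different and rests on a substantive external input: writing $b=B(\theta)$ via an analytic family between $L_2(\M_n)$ and $\M_n$, checking the boundary estimates $\|f(it)\|_{\mathsf{h}^c_2}\le\|x\|_{\mathsf{bmo}^c_2}$ and $\|f(1+it)\|_{\mathsf{bmo}^c_2}\le\|x\|_{\mathsf{bmo}^c_2}$ for $f(z)=(x-x_n)B(z)$, and then invoking the interpolation embedding $(\mathsf{h}^c_2,\mathsf{bmo}^c)_{\theta}\subset\mathsf{h}^c_p$ with constant $cp$ from Bekjan--Chen--Perrin--Yin. Without (iii) your item (ii) also collapses, since your interpolation at the large endpoint $r=q$ uses the bound $cq\|x\|_{\mathsf{bmo}^c}$. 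To complete your argument you must either prove the Rosenthal-based induction with the claimed linear constant or replace it by the interpolation embedding the paper uses.
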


\begin{proof} We only need to prove the column case, since the row case can be done by replacing $x$ with $x^*$.
First consider the case $2<p<\infty$. We will show the following
inequalities:
$$\|x\|_{\mathsf{bmo}^c_2}\leq\|x\|_{\mathsf{bmo}^c_p}\leq cp\|x\|_{\mathsf{bmo}^c_2}.$$
The left inequality is obtained directly by H\"{o}lder's inequality.
In fact, taking $a\in \M_n$ with $\|a\|_2\leq1$, there exists a
factorization $a=a_0a_1$ such that $\|a_0\|_p=\|a\|^{2/p}_2\leq1$
and $\|a_1\|_{{2p}/{(p-2)}}=\|a\|^{(p-2)/p}_2\leq1$, so
\begin{align*}
\|(x-x_n)a\|^2_{\mathsf{h}^c_2}&=\tau(a^*_1a^*_0s^2_c(x-x_n)a_0a_1)\\
&\leq\|a^*_1\|_{\frac{2p}{p-2}}\|a_0^*s^2_c(x-x_n)a_0\|_{\frac{p}{2}}\|a_1\|_{\frac{2p}{p-2}}\\
&\leq\|(x-x_n)a_0\|^2_{\mathsf h^c_p}.
\end{align*}
We invoke complex interpolation to prove the right inequality. Fix
$n$, let $b\in{L_p(\M_n)}$ with $\|b\|_p\leq1$ and
$S=\{z\in\mathbb{C}: 0\leq Rez\leq1\}$. Then by interpolation
between $L_p$ spaces $L_p=(L_2,L_{\infty})_{\theta}$, there exists
an operator-valued function $B$ which is continuous on $S$ and
analytic in the interior of $S$ such that $B(\theta)=b$ and
$$\sup_{t\in\mathbb{R}}\|B(it)\|_{2}\leq1,\qquad
\sup_{t\in\mathbb{R}}\|B(1+it)\|_{{\infty}}\leq1.$$ Define
$$f(z)=(x-x_n)B(z).$$ Then on the one hand, by the definition of $\mathsf{bmo}^c_2(\M)$, we have
$$\|f(it)\|_{\mathsf{h}^c_2}\leq\|x\|_{\mathsf{bmo}^c_2}.$$ On the other hand,
by a simple calculation, we have
$$\|f(1+it)\|_{\mathsf{bmo}^c_2}\leq\|x-x_n\|_{\mathsf{bmo}^c_2}\|B(1+it)\|_{\infty}\leq \|x\|_{\mathsf{bmo}^c_2}.$$
Therefore, by interpolation,
$$\|f(\theta)\|_{(\mathsf{h}^c_2,\mathsf{bmo}^c)_{\theta}}\leq\|x\|_{\sf{bmo}^c_2}=\|x\|_{\sf{bmo}^c}.$$
However by \cite{BCPY},
$$(\mathsf{h}^c_2,\mathsf{bmo}^c)_{\theta}\subset\mathsf{h}^c_p$$
with relevant constant majorized by $cp$. We then deduce that
\begin{align}
\|f(\theta)\|_{\mathsf{h}^c_p}\leq cp\|x\|_{\mathsf{bmo}^c},
\label{c-c-b-const}
\end{align}
hence the desired inequality holds.

For the case $0<p<2$. We show the following inequalities:
$$\|x\|_{\mathsf{bmo}^c_p}\leq\|x\|_{\mathsf{bmo}^c_2}\leq C^{1/p-1/2}\|x\|_{\mathsf{bmo}^c_p}.$$
Again, the left inequality is obtained by H\"{o}lder's inequality.
It remains to prove the right one. We choose $2<p_1<\infty$ and
$0<\theta<1$ such that ${1}/{2}={(1-\theta)}/{p}+{\theta}/{p_1}$.
Fix $n$, by the definition of $\mathsf{bmo}^c_{p}(\M)$, we can view
$x-x_n$ as a bounded operator from $L_p(\M_n)$ to
$\mathsf{h}^c_p(\M)$. Then we have the following two inequalities:
$$\|x-x_n\|_{L_p(\M_n)\rightarrow \mathsf{h}^c_p}\leq\|x\|_{\mathsf{bmo}^c_p},\qquad\|x-x_n\|_{L_{p_1}(\M_n)\rightarrow \mathsf{h}^c_{p_1}}\leq\|x\|_{\mathsf{bmo}^c_{p_1}}.$$
Then by interpolation, we get
$$\|x-x_n\|_{L_{2}(\M_n)\rightarrow (\mathsf{h}^c_p,\mathsf{h}^c_{p_1})_{\theta}}\leq \|x\|^{1-\theta}_{\mathsf{bmo}^c_p}\|x\|^{\theta}_{\mathsf{bmo}^c_{p_1}}.$$
Now by the trivial contractive inclusion
$(\mathsf{h}^c_p,\mathsf{h}^c_{p_1})_{\theta}\subset
\mathsf{h}^c_2$, and the right inequality in the case
$2<p_1<\infty$, we get
$$\|x-x_n\|_{L_{2}(\M_n)\rightarrow \mathsf{h}^c_2}\leq cp_1\|x\|^{1-\theta}_{\mathsf{bmo}^c_p}\|x\|^{\theta}_{\mathsf{bmo}^c_{2}}.$$
Therefore,
$$\|x\|_{\mathsf{bmo}^c_{2}}\leq (cp_1)^{\theta}\|x\|^{1-\theta}_{\mathsf{bmo}^c_p}\|x\|^{\theta}_{\mathsf{bmo}^c_{2}},$$
hence
$$\|x\|_{\mathsf{bmo}^c_{2}}\leq (cp_1)^{\frac{\theta}{1-\theta}}\|x\|_{\mathsf{bmo}^c_p}.$$
Noting that ${\theta}/({1-\theta})=({1/p-1/2})/({1/2-1/p_1})$, we
get the desired estimate by taking $C=(cp_1)^{1/(1/2-1/p_1)}$.
\end{proof}

\begin{remark} The constant in (\ref{c-c-b-const}) is optimal. This can be seen as follows.
By Lemma 4.3 in \cite{BCPY}, $\mathsf h^c_{p'}(\M)$ embeds into
$(\mathsf h^c_2(\M),\mathsf h^c_1(\M))_{\theta}$ with constant
independent of $p'$. So $((\mathsf h^c_2(\M))^*,(\mathsf
h^c_1(\M))^*)_{\theta}$ embeds into $(\mathsf h^c_{p'}(\M))^*$ with
constant independent of $p$ by duality. Finally, by the optimal
embedding $(\mathsf h^c_{p'}(\M))^*\subset \mathsf h^c_p(\M)$ with
constant $cp$ in \cite{JuXu03} and
$\mathsf{bmo}^c(\M)\subset(\mathsf h^c_1(\M))^*$ in \cite{Per09},
$(\mathsf h^c_2(\M),\mathsf{bmo}^c(\M))_{\theta}$ embeds into
$\mathsf h^c_{p}(\M)$ with optimal constant $cp$.
\end{remark}
It is natural to ask whether there is a result similar to Theorem
\ref{c-j-n-b} for $\BMO^c$ by replacing $\mathsf{h}^c_p$ and
$x-x_{n}$ in the definition of $\mathsf{bmo}^c_p$ by
$\mathcal{H}^c_p$ and $x-x_{n-1}$ respectively. Using the identity
$$\BMO^c(\M)\simeq\mathsf{bmo}^c(\M)\cap\mathsf{bmo}^d(\M)$$
proved in \cite{Per09}, we are reduced to deal with the diagonal
space $\mathsf{bmo}^d(\M)$. Surprisingly, the result is true only
for $2\leq p<\infty$ (see Remark \ref{counter example1}).

\begin{definition}

For $1\leq p<\infty$, we define
\begin{enumerate}[(i)]
\item $$\BMO^c_p(\M)=\left\{x\in
L_1(\M):\|x\|_{\BMO^c_p}<\infty\right\}$$ with
$$\|x\|_{\BMO^c_p}=\sup_n\sup_{a\in
\M_n,\|a\|_p\leq1} \|(x-x_{n-1})a\|_{\mathcal{H}^c_p};$$

\item $$\BMO^r_p(\M)=\{x:x^*\in\BMO^c_p(\M)\};$$

\item
$$\BMO_p(\M)=\BMO^c_p(\M)\cap\BMO^r_p(\M)$$
equipped with the norm
$$\|x\|_{\BMO_p}=\max\{\|x\|_{\BMO^c_p},\|x\|_{\BMO_p^r}\}.$$
\end{enumerate}

\end{definition}

\begin{remark}
For $p=2$, we recover the spaces $\BMO^c(\M)$, $\BMO^r(\M)$ and
$\BMO(\M)$.
\end{remark}

The following lemma will alow us to handle with the diagonal space
$\mathsf{bmo}^d(\M)$.

\begin{lemma}\label{lemma}
For $2\leq p<\infty ,$ we have
\[
cp^{-1}\|b\|_{{\infty}}\leq \sup_{a\in\M,\|a\|_p\leq
1}\|ba\|_{{\mathcal H}^c_p}\leq cp^{\frac 12}\|b\|_{{\infty}}.
\]
\end{lemma}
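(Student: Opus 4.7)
The plan is to prove the two sides of the inequality separately, both relying on sharp-constant versions of the noncommutative Burkholder--Gundy inequality of Junge and Xu.

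\textbf{Upper bound.} Write $\|ba\|_{\mathcal{H}^c_p} = \|S_c(ba)\|_p$. The one-sided column square function inequality for $p \ge 2$ states that $\|S_c(x)\|_p \le c\sqrt{p}\,\|x\|_p$, with the growth $\sqrt{p}$ sharp. Applied to $x = ba$ and combined with $\|ba\|_p \le \|b\|_\infty \|a\|_p \le \|b\|_\infty$, this yields $\|ba\|_{\mathcal{H}^c_p} \le c\sqrt{p}\,\|b\|_\infty$, which is the right-hand inequality.

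\textbf{Lower bound.} Here I would argue by duality. The supremum is exactly the operator norm of the multiplication map $T_b\colon L_p(\M) \to \mathcal{H}^c_p(\M)$, $T_b a = ba$, whose adjoint is $T_b^{\ast}\colon \mathcal{H}^c_{p'}(\M) \to L_{p'}(\M)$, $T_b^{\ast} y = b^{\ast} y$. By Pisier--Xu's duality $(\mathcal{H}^c_p(\M))^{\ast} \simeq \mathcal{H}^c_{p'}(\M)$ (with Junge--Xu's sharp equivalence constant of order $p$ for $p \ge 2$), one has $\|T_b\| \ge c p^{-1}\|T_b^{\ast}\|$. It then suffices to produce a $y$ with $\|y\|_{\mathcal{H}^c_{p'}} \le 1$ and $\|b^{\ast} y\|_{p'} \ge c\|b\|_\infty$. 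The key observation is that when $y \in \M_n$ with $\mathcal{E}_{n-1}(y) = 0$ (a single column martingale difference at time $n$), the column square function collapses to $|y|$, giving $\|y\|_{\mathcal{H}^c_{p'}} = \|y\|_{p'}$. Taking $y$ as the mean-zero part of a projection $q \in \M_n$ that approximates (via weak-$\ast$ density of $\cup_n \M_n$ in $\M$) a spectral projection of $bb^{\ast}$ supported near $\|b\|_\infty^2$ ensures that $\|b^{\ast} y\|_{p'}$ captures $\|b\|_\infty \|y\|_{p'}$ up to a small error.

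\textbf{Main obstacle.} Constructing this martingale difference $y$ rigorously is the delicate step: the extremal spectral projection of $bb^{\ast}$ lies in $\M$ but not a priori in any $\M_n$, so one approximates by a projection in $\M_n$ (via density) and then subtracts its $(n-1)$-conditional expectation. Controlling the approximation error, so that $b^{\ast} y$ still captures $\|b\|_\infty\|y\|_{p'}$, is the main source of loss; combined with the $p$-loss inherited from Pisier--Xu's duality, this yields the $cp^{-1}\|b\|_\infty$ bound asserted in the lemma.
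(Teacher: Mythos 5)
Your upper bound is exactly the paper's: $\|ba\|_{\mathcal{H}^c_p}\le cp^{1/2}\|ba\|_p\le cp^{1/2}\|b\|_\infty$, using the one-sided square-function estimate with constant $cp^{1/2}$. The lower bound is where your proposal has a genuine gap. Your plan requires an adapted single martingale difference $y=q-\mathcal{E}_{n-1}q$ with $q\in\mathcal{P}(\M_n)$ and $\|b^*y\|_{p'}\ge c\|b\|_\infty\|y\|_{p'}$, and you yourself flag this construction as unresolved; it is indeed the crux, and as sketched it does not go through. Weak-$*$ density of $\bigcup_n\M_n$ does not let you approximate a spectral projection of $bb^*$ by a projection of some $\M_n$ in a way strong enough to control $b^*y$ (one would rather use martingale convergence of $\mathcal{E}_n(e)$ and take spectral projections of $\mathcal{E}_n(e)$, a nontrivial noncommutative ``density point'' argument); and even granted a projection $q$ mostly dominated by the top spectral projection of $bb^*$, subtracting $\mathcal{E}_{n-1}q$ can cancel most of $q$ (e.g.\ when $q$ is nearly $\M_{n-1}$-measurable), and nothing in your sketch keeps the ratio $\|b^*y\|_{p'}/\|y\|_{p'}$ comparable to $\|b\|_\infty$. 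A further warning sign: the embedding $\mathcal{H}^c_{p'}\subset(\mathcal{H}^c_p)^*$ is contractive (orthogonality of martingale differences plus the isometric duality of the column spaces $L_p(\M;\ell_2^c)$), so if your construction worked as stated you would obtain a lower bound $c\|b\|_\infty$ with no factor $p^{-1}$ at all; the $p$-loss you attribute to the duality is not really located there, which indicates that the difficulty has been displaced into the unproved step rather than solved.

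The paper avoids all of this because the supremum in the lemma runs over all $a\in\M$, not over adapted elements, so no approximation by the filtration is needed. Normalizing $\|b\|_\infty=1$, it writes $\|b^*\|_\infty=\sup_{\|y\|_{2p}\le1}\|yb^*\|_{2p}=\sup_y\|b|y|^2b^*\|_p^{1/2}$, applies the reverse square-function inequality $\|x\|_p\le cp\|x\|_{\mathcal{H}^c_p}$, valid for selfadjoint $x$, to the positive element $b|y|^2b^*=ba$ with $a=|y|^2b^*$, $\|a\|_p\le1$, and gets $\|b\|_\infty\le cp^{1/2}\sup_{\|a\|_p\le1}\|ba\|^{1/2}_{\mathcal{H}^c_p}$, i.e.\ the claimed $cp^{-1}$ bound. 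If you want to retain a duality flavour you could still test against such non-adapted elements, but as written the lower half of your argument is a plan, not a proof.
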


\begin{proof} Note that $\|\cdot \|_{{\mathcal H}^c_p}\leq cp^{1/2}
\|\cdot \|_{p}$ (see \cite{Ran02}, Remark 5.4 as a reference for
the constant we use here), we have
\[
\sup_{a\in\M,\|a\|_p\leq 1}\|ba\|_{{\mathcal H}^c_p}\leq cp^{\frac
12}\sup_{a\in\M,\|a\|_p\leq 1}\|ba\|_{p}=cp^{\frac
12}\|b\|_{\infty}.
\]
For the first inequality, without loss of generality assume
$\|b\|_{\infty}=1.$ Note that for selfadjoint $x\in $
${\M},\|x\|_{p}\leq cp\|x\|_{{\mathcal H}_p^c}$ (see \cite{Ran02},
Remark 5.4). Then
\begin{align*}
\|b^{*}\|_{{\infty}} &=\sup_{y\in \M,\|y\|_{2p}\leq 1}\|yb^{*}\|_{{2p}} \\
&=\sup_{y\in\M,\|y\|_{2p}\leq 1}\|b|y|^2b^{*}\|_{p}^{\frac 12} \\
&\leq cp^{\frac 12}\sup_{y\in\M,\|y\|_{2p}\leq 1}\|b|y|^2b^{*}\|_{{\mathcal H}^c_p}^{\frac 12} \\
&\leq cp^{\frac 12}\sup_{a\in\M, \|a\|_p\leq 1}\|ba\|_{{\mathcal
H}_p^c}^{\frac 12}.
\end{align*}
And then $cp^{-1}\|b\|_{{\infty}}\leq \sup_{a\in\M,\|a\|_p\leq 1}\|ba\|_{{\mathcal H}%
^c_p}.$
\end{proof}

\begin{theorem}\label{c-j-n-B}
For all $2\leq p<\infty$, we have
$$\BMO^c_p(\M)=\BMO^c(\M)$$
with equivalent norms. More precisely,
$$cp^{-1}\|x\|_{{\BMO}^{c}}\leq\|x\|_{{\BMO}
_p^{c}}\leq cp\|x\|_{{\BMO}^{c}}. $$ Similarly,
$\BMO^r_p(\M)=\BMO^r(\M)$ with equivalent norms.
\end{theorem}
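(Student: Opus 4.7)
Plan: To prove Theorem~\ref{c-j-n-B}, I would combine Theorem~\ref{c-j-n-b} (the conditional version) with Lemma~\ref{lemma} (the diagonal bound), exploiting the Perrin identification $\BMO^c(\M)\simeq\mathsf{bmo}^c(\M)\cap\mathsf{bmo}^d(\M)$ from \cite{Per09} and the parallel Junge--Xu decomposition $\mathcal{H}^c_p\simeq\mathsf{h}^c_p\cap\mathsf{h}^d_p$ valid for $2\le p<\infty$. By symmetry only the column case needs treatment.

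The lower bound. First I would observe that $\|x\|_{\BMO^c_2}=\|x\|_{\BMO^c}$: the pointwise identity $S_c((x-x_{n-1})a)^2=a^*S_c(x-x_{n-1})^2 a$ for $a\in\M_n$, combined with the extreme-point description of $L_1(\M_n)_+$, identifies $\sup_{a\in\M_n,\|a\|_2\le 1}\|(x-x_{n-1})a\|_{\mathcal{H}^c_2}$ with $\|\E_n|x-x_{n-1}|^2\|_\infty^{1/2}$ (this is essentially the $p=2$ case of (\ref{motivation-1})). For general $p\ge 2$ and $a\in\M_n$ with $\|a\|_2\le 1$, the polar factorization $a=u|a|=u|a|^{2/p}\cdot|a|^{(p-2)/p}=:a_0a_1$ gives $a_0,a_1\in\M_n$ with $\|a_0\|_p\le 1$ and $\|a_1\|_{2p/(p-2)}\le 1$. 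Noncommutative H\"older then yields
\[
\|(x-x_{n-1})a\|_{\mathcal{H}^c_2}=\|S_c(x-x_{n-1})a_0a_1\|_2\le\|S_c(x-x_{n-1})a_0\|_p=\|(x-x_{n-1})a_0\|_{\mathcal{H}^c_p}\le\|x\|_{\BMO^c_p},
\]
so $\|x\|_{\BMO^c}\le\|x\|_{\BMO^c_p}$.

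The upper bound. I would split $(x-x_{n-1})a=dx_n\cdot a+(x-x_n)a$. The first summand is a single-level martingale difference, so $\|dx_n a\|_{\mathcal{H}^c_p}=\|dx_n a\|_p\le\|dx_n\|_\infty\|a\|_p\le\|x\|_{\BMO^c}$, using that $\E_n|x-x_{n-1}|^2=|dx_n|^2+\E_n|x-x_n|^2$ forces $\|dx_n\|_\infty\le\|x\|_{\BMO^c}$. For the tail $(x-x_n)a$, the inclusion $a\in\M_n\subset\M_{k-1}$ for $k>n$ lets $a$ pull out of the conditional expectations, giving $s_c((x-x_n)a)^2=a^*s_c(x-x_n)^2 a$; hence $\|(x-x_n)a\|_{\mathsf{h}^c_p}=\|s_c(x-x_n)a\|_p\le\|x\|_{\mathsf{bmo}^c_p}\le cp\,\|x\|_{\mathsf{bmo}^c}\le cp\,\|x\|_{\BMO^c}$ by Theorem~\ref{c-j-n-b} and Perrin. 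To upgrade from $\mathsf{h}^c_p$ to $\mathcal{H}^c_p$ I would invoke the Junge--Xu relation $\mathcal{H}^c_p\simeq\mathsf{h}^c_p\cap\mathsf{h}^d_p$ ($p\ge 2$), with the remaining $\mathsf{h}^d_p$-contribution controlled by Lemma~\ref{lemma} and the Perrin identity.

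The main obstacle is this last step: bounding $\|(x-x_n)a\|_{\mathsf{h}^d_p}^p=\sum_{k>n}\|dx_ka\|_p^p$ uniformly by a constant times $\|x\|_{\BMO^c}^p\|a\|_p^p$, since the naive per-term estimate $\|dx_ka\|_p\le\|dx_k\|_\infty\|a\|_p$ does not sum. A cleaner alternative route is direct complex interpolation of $T_x\colon a\mapsto(x-x_{n-1})a$ between the endpoints $L_2(\M_n)\to L_2(\M)$ (with norm $\|x\|_{\BMO^c}$, immediate from the lower-bound calculation) and $L_\infty(\M_n)\to\BMO^c(\M)$ (with norm $\lesssim\|x\|_{\BMO^c}$, requiring a case-split calculation of $\|\E_k|(x-x_{n-1})b-\E_{k-1}((x-x_{n-1})b)|^2\|_\infty$ for $b\in\M_n$ across the regimes $k<n$, $k=n$, $k>n$), and then exploiting the identification $(\mathcal{H}^c_2,\BMO^c)_\theta\simeq\mathcal{H}^c_p$ with constant $\lesssim p$ from \cite{BCPY} exactly as in the proof of Theorem~\ref{c-j-n-b}.
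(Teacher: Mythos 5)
Your overall strategy (reduce to Theorem~\ref{c-j-n-b} plus a separate treatment of the diagonal, or interpolate directly) is close to what the paper alludes to before its proof, but as written there are concrete gaps. First, the two structural identities you invoke for $a\in\M_n$ are false: $S_c((x-x_{n-1})a)^2\neq a^*S_c(x-x_{n-1})^2a$ in general, and $dx_na$ is \emph{not} a single-level martingale difference. The reason is the same in both cases: $a\in\M_n$ but $a\notin\M_{n-1}$, so $\E_{n-1}(dx_na)\neq0$ and right multiplication by $a$ smears the level-$n$ difference over the levels $k\le n$; only the conditioned square function of $(x-x_n)a$, or the differences with $k>n$, allow $a$ to be pulled out. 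Consequently the equality $\|S_c(x-x_{n-1})a_0\|_p=\|(x-x_{n-1})a_0\|_{\mathcal{H}^c_p}$ in your lower bound is unjustified (at $p=2$ your computation survives because only traces are involved), and the leftover diagonal term $dx_na_0$ is exactly what Lemma~\ref{lemma} exists to absorb --- at the cost of a factor $p$, which is why the theorem asserts only $cp^{-1}\|x\|_{\BMO^c}\le\|x\|_{\BMO^c_p}$ rather than the constant $1$ you claim. The paper's lower bound does precisely this: it splits $\E_n\sum_{k\ge n}|dx_k|^2$ into the tail (handled by duality and H\"older, as you do) plus $\|x_n-x_{n-1}\|_\infty^2$ (handled by Lemma~\ref{lemma}). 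Likewise your first summand in the upper bound should be estimated by $\|dx_na\|_{\mathcal{H}^c_p}\le cp^{1/2}\|dx_n\|_\infty\|a\|_p$, not by an equality.

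Second, your upper bound is not completed. You correctly note that the route through $\mathcal{H}^c_p\simeq\mathsf{h}^c_p\cap\mathsf{h}^d_p$ founders on the $\mathsf{h}^d_p$ term, but the fallback --- interpolating $T_x\colon a\mapsto(x-x_{n-1})a$ between $L_2(\M_n)\to\mathcal{H}^c_2$ and $L_\infty(\M_n)\to\BMO^c$ --- hinges on the inclusion $(\mathcal{H}^c_2,\BMO^c)_\theta\subset\mathcal{H}^c_p$ with constant $cp$, which you attribute to \cite{BCPY}; what that reference provides (and what the paper uses in Theorem~\ref{c-j-n-b}) is the conditioned statement $(\mathsf{h}^c_2,\mathsf{bmo}^c)_\theta\subset\mathsf{h}^c_p$, so the key ingredient of your alternative route is unproved as cited (your sketched endpoint estimate $\|(x-x_{n-1})b\|_{\BMO^c}\lesssim\|x\|_{\BMO^c}\|b\|_\infty$ is fine). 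The paper's actual proof avoids both routes: by Hahn--Banach and the duality $(\mathcal{H}^c_1)^*=\BMO^c$ it writes $dx_k=\E_kb_k-\E_{k-1}b_k$ with $\|(b_k)_k\|_{L_\infty(\M;\ell^c_2)}=\|x\|_{\BMO^c}$, and the noncommutative Stein inequality then gives $\|(dx_ka)_{k>n}\|_{L_p(\ell^c_2)}\le cp\,\|(b_ka)_k\|_{L_p(\ell^c_2)}\le cp\,\|x\|_{\BMO^c}\|a\|_p$; combined with Lemma~\ref{lemma} for the diagonal this yields $\|x\|_{\BMO^c_p}\le cp\|x\|_{\BMO^c}$ with no $\mathcal{H}$-level interpolation. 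Either adopt that argument or supply a proof (or a correct reference, e.g.\ Musat's interpolation results) for the non-conditioned interpolation inclusion your sketch requires.
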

Using the previous lemma and the identity
$\BMO^c(\M)\simeq\mathsf{bmo}^c(\M)\cap\mathsf{bmo}^d(\M)$, we can
easily deduce Theorem \ref{c-j-n-B} from Theorem \ref{c-j-n-b}. We
will however present a direct proof.
\begin{proof} We only prove the inequalities for the column case, the row
case can be dealt with similarly. By the previous lemma and
H\"{o}lder's inequality, we have
\begin{align*}
\|\mathcal{E}_n\sum_{k=n}^\infty
|dx_k|^2\|_{{\infty}}
&\leq \sup_{b\in\M^+_n,\|b\|_1\leq 1}\tau \left(\sum_{k=n+1}^\infty
|dx_k|^2b\right)+\|x_n-x_{n-1}\|^2_{{\infty}} \\
&\leq\sup_{b\in\M^+_n,\|b\|_1\leq 1}\tau \left(\sum_{k=n+1}^\infty
|(dx_k)b^{\frac 1p}|^2b^{
\frac{p-2}p}\right)\\
&\quad\quad\quad\quad\quad\quad+cp^2\sup_{a\in\M_n,\|a\|_p\leq 1}\|(x_n-x_{n-1})a\|^2_{{\mathcal H}^c_p} \\
&\leq\sup_{b\in\M^+_n,\|b\|_1\leq 1}\left\| \sum_{k=n+1}^\infty
|(dx_k)b^{\frac 1p}|^2\right\| _{{\frac p2}}\left\|
b^{\frac{p-2}p}\right\| _{{(\frac
p2)^{\prime }}}\\
&\quad\quad\quad\quad\quad\quad+cp^2\sup_{a\in\M_n,\|a\|_p\leq 1}\|(x_n-x_{n-1})a\|^2_{{\mathcal H}_p^c} \\
&\leq\sup_{b\in\M^+_n,\|b\|_1\leq 1}\left\| (x-x_n)b^{\frac
1p}\right\|^2 _{{\mathcal H}^c_p}\\
&\quad\quad\quad\quad\quad\quad+cp^2\sup_{a\in\M_n,\|a\|_p\leq
1}\|(x_n-x_{n-1})a\|^2_{{\mathcal H}^c_p}.
\end{align*}
Then by
$\|\mathcal{E}_nx\|_{\mathcal{H}^c_p}\leq\|x\|_{\mathcal{H}^c_p}$,
$$\|x\|_{{\BMO}_2^{c}}\leq cp\sup_{a\in\M_n,\|a\|_p\leq 1}\left\| (x-x_{n-1})a\right\|
_{{\mathcal H}^c_p}=cp\|x\|_{{\BMO}^c_p}.$$ Conversely, by the
previous lemma,
\begin{align}
\|x\|_{{\BMO}_p^{c}} &\leq\sup_n\sup_{a\in\M_n,\|a\|_p\leq1}\left\|
(x-x_n)a\right\| _{{\mathcal H}^c_p}\nonumber\\
&\quad\quad\quad\quad\quad\quad+\sup_n\sup_{a\in\M_n,\|a\|_p\leq1}\|(x_n-x_{n-1})a\|_{{\mathcal
H}^c_p}
\nonumber \\
&\leq\sup_n\sup_{a\in\M_n,\|a\|_p\leq1}\left\| (x-x_n)a\right\|
_{{\mathcal H}^c_p}+cp^{\frac 12}\sup_n\|x_n-x_{n-1}\|_{{\infty}}
\nonumber\\
&\leq \sup_n\sup_{a\in\M_n,\|a\|_p\leq1}\left\| (dx_ka)_{k=n+1}^\infty
\right\|_{L_p(\ell^c_2)}+cp^{\frac 12}\|x\|_{{\BMO}^c_2}.\label{a}
\end{align}
Note that, by the Hahn-Banach theorem and the duality between
${\mathcal H}^c_1({\M})$ and $\BMO^{c}({\M})$,  there exists a
sequence $(b_n)_{n=1}^\infty \in L_\infty ({\M};\ell_2^c)$ such that
\[
\left\| (b_n)_{n=1}^\infty \right\|
_{L_\infty(\ell^c_2)}=\|x\|_{{\BMO}^c},\quad dx_k=\mathcal
E_kb_k-\mathcal E_{k-1}b_k.
\]

Thus by the noncommutative Stein inequality (see \cite{Ran02} for
the constant used below) and H\"{o}lder's inequality,
\begin{align*}
&\sup_{a\in\M_n,\|a\|_p\leq1}\left\| (dx_ka)_{k=n+1}^\infty \right\|
_{L_p(\ell^c_2)}\\
&\quad\quad\quad\leq \sup_{a\in\M_n,\|a\|_p\leq1}\left\| (\mathcal
E_k(b_ka))_{k=n+1}^\infty \right\| _{L_p(\ell
^c_2)}\\
&\quad\quad\quad\quad\quad\quad\quad+\sup_{a\in\M_n,\|a\|_p\leq1}\left\|
(\mathcal E_kb_{k+1}a)_{k=n}^\infty \right\|
_{L_p(\ell^c_2)} \\
&\quad\quad\quad\leq cp\sup_{a\in\M_n,\|a\|_p\leq1}\left\|
(b_ka)_{k=n+1}^\infty \right\| _{L_p(\ell^c_2)}\\
&\quad\quad\quad\leq cp\left\| \sum_{k=1}^\infty |b_k|^2\right\|
_{\infty}^{\frac 12}=cp\|x\|_{{\BMO}_2^{c}}.
\end{align*}
Combining this with (\ref{a}) we finish the proof.
\end{proof}

\begin{remark}\label{counter example1}
It is a bit surprising that Theorem \ref{c-j-n-B} is actually wrong
for any $p<2$. Indeed, choose a filtration $\M_1$, $\M_2$,
$\M_3$,...,$\M_{n-1}$ and $y\in \M_{n-1}$ such that $\|y\|_p=1$ and
$\|y\|_{\mathcal{H}_p^c}=c_n>>1$. Let $\M_{n}=L_{\infty}(\Omega,
\M_{n-1})$ with $\Omega=\{0,1\}$ with $\mu\{1\}=\mu\{0\}=1/2$. We
certainly can view $\M_k$, $k<n$ as the space of constant functions
on $\Omega$, so $\M_k\subset \M_n$. Let $x=1$ on $\{0\}$ and $x=-1$
on $\{1\}$ then $x_{n-1}=0$. Let $a=y$ on $\{0\}$ and $a=-y$ on
$\{1\}$. Then $(x-x_{n-1})a=y$ whose $\mathcal{H}_p^c$ norm equals
$c_n$ and $\|a\|_{p}=1$, so $\|x\|_{\BMO^c_p}\geq c_n$. But
$\|x\|_{\BMO^c_2}=1$.
\end{remark}

In the rest of this subsection, we turn to Junge/Musat's type of
John-Nirenberg inequality. In \cite{JuMu07}, Junge and Musat
established the inequality for $2<p<\infty$ in the state case. Later
the second author of the present paper gave a simple proof for all
$1\leq p<\infty$ in the tracial setting (see \cite{Mei06}). The idea
of the proof of Theorem \ref{c-j-n-b} can be applied to obtain this
inequality for all $0<p<\infty$ (see Corollary \ref{c-jm-j-n-B}). We
start again with $\mathsf{bmo}(\M)$.

\begin{theorem}\label{c-jm-j-n-b}
For all $0<p<\infty$, we have
$$\alpha^{-1}_p\|x\|_{\mathsf{bmo}}\leq\mathsf b_p(x)\leq \beta_p\|x\|_{\mathsf{bmo}}$$
where
\begin{align*}
\mathsf b_p(x)=\max\{&\sup_{n}\|(dx_n)_n\|_{\infty},\
\sup_n\sup_{b\in\M_n,\|b\|_p\leq1} \|(x-x_{n})b\|_{p},\\
&\sup_n\sup_{b\in\M_n,\|b\|_p\leq1}
\|b(x-x_{n})\|_{p}\}.
\end{align*}
The constant $\alpha_p$ and $\beta_p$ have the same orders as those
in Theorem \ref{c-j-n-b}.
\end{theorem}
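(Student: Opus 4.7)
The plan is to imitate the proof of Theorem \ref{c-j-n-b}, replacing $\mathsf h^c_p$ throughout by $L_p$. The key starting observation is that for $b \in \M_n$, orthogonality of martingale increments combined with $b \in \M_{k-1}$ for all $k > n$ yields
\[
\|(x-x_n)b\|_2^2 = \sum_{k>n}\tau(b^* |dx_k|^2 b) = \tau\Bigl(bb^*\sum_{k>n}\E_{k-1}|dx_k|^2\Bigr) = \|(x-x_n)b\|_{\mathsf h^c_2}^2.
\]
Combined with the trivial bound $\|\E_1 x\|_\infty \leq \|dx_1\|_\infty$ and the symmetric row identity, this collapses $\mathsf b_2(x)$ to $\|x\|_{\mathsf{bmo}}$. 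Since the diagonal term $\sup_n\|dx_n\|_\infty = \|x\|_{\mathsf{bmo}^d}$ appears in both $\mathsf b_p(x)$ and $\|x\|_{\mathsf{bmo}}$, only the two multiplicative parts require analysis in what follows.

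For $2 < p < \infty$, the lower bound $\|x\|_{\mathsf{bmo}} \leq \mathsf b_p(x)$ follows from the $p = 2$ identity together with the Hölder factorization of Theorem \ref{c-j-n-b}: given $b \in L_2(\M_n)$ with $\|b\|_2 \leq 1$ and polar decomposition $b = u|b|$, setting $b_1 = u|b|^{2/p}$ and $b_0 = |b|^{(p-2)/p}$ gives $\|b_1\|_p \leq 1$, $\|b_0\|_{2p/(p-2)} \leq 1$, and hence $\|(x-x_n)b\|_2 \leq \|(x-x_n)b_1\|_p$. For the upper bound $\mathsf b_p(x) \leq cp\,\|x\|_{\mathsf{bmo}}$, apply complex interpolation $L_p = (L_2, L_\infty)_\theta$ with $\theta = 1 - 2/p$: for $b \in L_p(\M_n)$ with $\|b\|_p \leq 1$, choose an analytic function $B$ on the strip $S$ with $B(\theta) = b$, $\sup_t\|B(it)\|_2 \leq 1$, $\sup_t\|B(1+it)\|_\infty \leq 1$, and set $f(z) = (x-x_n)B(z)$. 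On the left edge, using $B(it)B(it)^* \in L_1(\M_n)$,
\[
\|f(it)\|_2^2 = \tau\bigl(B(it)B(it)^* \E_n|x-x_n|^2\bigr) \leq \|B(it)\|_2^2 \|x\|_{\mathsf{bmo}^c}^2 \leq \|x\|_{\mathsf{bmo}}^2.
\]
On the right edge, one needs the module property $\|(x-x_n)B\|_{\BMO} \leq \|B\|_\infty\,\|x\|_{\BMO}$ for $B \in \M_n$, verified by splitting the index $m$ appearing in the $\BMO$ supremum into $m > n$ (where $\E_m|(x-x_{m-1})B|^2 = B^* \E_m|x-x_{m-1}|^2 B$ factors directly) and $m \leq n$ (where $\E_m(B^* \E_n|x-x_n|^2 B) \leq \|B\|_\infty^2 \|\E_n|x-x_n|^2\|_\infty$ via the operator inequality $B^* A B \leq \|B\|_\infty^2 A$ for $A \geq 0$, with the symmetric inequality $yBB^*y^* \leq \|B\|_\infty^2 yy^*$ for the row component). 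Musat's interpolation theorem $(L_2(\M), \BMO(\M))_\theta \hookrightarrow L_p(\M)$ with constant $\leq cp$ then gives $\|(x-x_n)b\|_p = \|f(\theta)\|_p \leq cp\,\|x\|_{\BMO} \lesssim cp\,\|x\|_{\mathsf{bmo}}$; the row and diagonal components of $\mathsf b_p$ are handled symmetrically and trivially.

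For $0 < p < 2$, the upper bound $\mathsf b_p(x) \leq \|x\|_{\mathsf{bmo}}$ comes from the reverse Hölder factorization: write $b = b_1 b_0$ with $b_1 = u|b|^{p/2} \in L_2(\M_n)$ of norm $\|b\|_p^{p/2}$ and $b_0 = |b|^{1-p/2} \in L_{2p/(2-p)}(\M_n)$ of norm $\|b\|_p^{(2-p)/2}$, so Hölder combined with the $L_2$ bound yields $\|(x-x_n)b\|_p \leq \|b\|_p \|x\|_{\mathsf{bmo}^c} \leq \|x\|_{\mathsf{bmo}}$. For the lower bound, fix $n$ and interpolate the column operator $Tb = (x-x_n)b$: the definition gives $\|T\|_{L_p(\M_n) \to L_p(\M)} \leq \mathsf b_p(x)$, and the $p_1 > 2$ case gives $\|T\|_{L_{p_1} \to L_{p_1}} \leq cp_1\,\|x\|_{\mathsf{bmo}}$. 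Complex interpolation at $1/2 = (1-\theta)/p + \theta/p_1$ yields
\[
\|T\|_{L_2 \to L_2} \leq \mathsf b_p(x)^{1-\theta}\bigl(cp_1\|x\|_{\mathsf{bmo}}\bigr)^\theta.
\]
Combining this with the analogous row estimate, the bound $\|x\|_{\mathsf{bmo}^d} \leq \mathsf b_p(x)$, and the $p = 2$ identity $\mathsf b_2(x) = \|x\|_{\mathsf{bmo}}$ gives $\|x\|_{\mathsf{bmo}} \leq (cp_1)^{\theta/(1-\theta)} \mathsf b_p(x)$; since $\theta/(1-\theta) = (1/p - 1/2)/(1/2 - 1/p_1)$, fixing $p_1$ (e.g.\ $p_1 = 4$) produces $\alpha_p \leq C^{1/p-1/2}$ as claimed.

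The main obstacle is achieving the sharp constant $cp$ on the right edge of the strip: naive one-sided interpolation $(L_2,\BMO^c)_\theta$ would produce $\mathcal H^c_p$ rather than $L_p$, so one must either invoke the two-sided $(L_2,\BMO)_\theta \hookrightarrow L_p$ embedding — which is what forces the module property to be checked for both column and row components above — or alternatively reassemble $L_p$ out of the column, row, and diagonal $\mathsf h^?_p$ pieces via the noncommutative Burkholder inequality, the latter requiring additional care to control the $\mathsf h^d_p$ piece $\sum_{k>n}\|(dx_k)b\|_p^p$ and to avoid losing an extra $\sqrt{p}$ in the final constant.
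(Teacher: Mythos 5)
Your proposal is correct and follows essentially the same route as the paper: the $p=2$ collapse plus H\"older for the easy inequalities, complex interpolation of the module map $b\mapsto (x-x_n)b$ between an $L_2$ endpoint and an $L_\infty\to$ BMO-type endpoint combined with a constant-$cp$ embedding of the interpolation space into $L_p$, and the self-improvement trick at an auxiliary $p_1>2$ for $0<p<2$; the only (cosmetic) deviation is that you verify the module property for $\BMO$ and pass back through Perrin's $\BMO(\M)\simeq\mathsf{bmo}(\M)$, whereas the paper checks it directly for $\mathsf{bmo}^c$, $\mathsf{bmo}^r$, $\mathsf{bmo}^d$ and invokes the embedding $(L_2,\mathsf{bmo})_\theta\subset L_p$ with constant $cp$ from Bekjan--Chen--Perrin--Yin, which is also the safer citation for the sharp order of the constant than Musat's interpolation theorem. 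One small repair: the auxiliary inequality $B^*AB\le\|B\|_\infty^2 A$ for $A\ge 0$ is false in general (it fails already for a rank-one projection $A$ conjugated by a unitary); what your $m\le n$ case actually needs, and what is true, is $B^*AB\le\|A\|_\infty B^*B\le\|A\|_\infty\|B\|_\infty^2\,\mathds{1}$, after which applying $\mathcal{E}_m$ gives the stated bound.
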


\begin{proof}
We first treat the case $2\leq p<\infty$. For $p=2$, it is trivial.
So we can assume $2<p<\infty$. The inequality
$$\|x\|_{\mathsf{bmo}}\leq\mathsf b_p(x)$$
follows from H\"{o}lder's inequality. We will prove the reverse
inequality by interpolation. By a simple calculation, we have the
following estimates
$$\|(x-x_n)b\|_{\mathsf{bmo}^c}\leq\|x\|_{\mathsf{bmo}^c}\|b\|_{{\infty}},$$
$$\|(x-x_n)b\|_{\mathsf{bmo}^r}\leq\|x\|_{\mathsf{bmo}^r}\|b\|_{{\infty}},$$
$$\|(x-x_n)b\|_{\mathsf{bmo}^d}\leq\|x\|_{\mathsf{bmo}^d}\|b\|_{{\infty}}.$$
Then it follows that
$$\|(x-x_n)b\|_{\mathsf{bmo}}\leq\|x\|_{\mathsf{bmo}}\|b\|_{{\infty}}.$$
On the other hand, it is clear that
$$\|(x-x_n)b\|_{2}=\|(x-x_n)b\|_{\mathsf{h}^c_2}\leq\|x\|_{\mathsf{bmo}}\|b\|_{2}.$$
Then by the interpolation result of \cite{BCPY}, we have
\begin{align}
\|(x-x_n)b\|_{p}&\leq cp\|(x-x_n)b\|_{(L_2,\mathsf{bmo})_{\theta}} \label{c-b-const}\\
&\leq cp\|x\|_{\mathsf{bmo}}\|b\|_{p}.\nonumber
\end{align}
In the same way, we obtain
$$\|b(x-x_n)\|_{p}\leq cp\|x\|_{\mathsf{bmo}}\|b\|_{p}.$$
Thus we prove the assertion.

Now we turn to the case $0<p<2$, by H\"older's inequality, we obtain
the trivial part
$$\mathsf b_p(x)\leq \mathsf b_2(x)=\|x\|_{\mathsf{bmo}}.$$
Let us prove the inverse one,  let $2<p_1<\infty$ and $\theta$ be
such that
$$\frac{1}{2}=\frac{1-\theta}{p}+\frac{\theta}{p_1}.$$ We view $x-x_n$
and $(x-x_n)^*$ as two operators. By interpolation,
\begin{align*}
&\|(x-x_n)\|_{L_2(\M_n) \rightarrow
L_2(\M)}\\&\leq\|(x-x_n)\|^{1-\theta}_{L_p(\M_n)\rightarrow
L_p(\M)}\|(x-x_n)\|^{\theta}_{L_{p_1}(\M_n)\rightarrow
L_{p_1}(\M)}\\
\end{align*}
 and similarly for $(x-x_n)^*$.
By the estimate for $p_1>2$, we have
$$\mathsf b_2(x)\leq (cp_1)^{\theta}\mathsf b^{1-\theta}_p(x)\mathsf b^{\theta}_2(x).$$
Therefore, we obtain
$$\|x\|_{\mathsf{bmo}}\leq(cp_1)^{\frac{\theta}{1-\theta}}\mathsf b_p(x)=C^{{1/p}-{1/2}}\mathsf b_p(x),$$
with $C=(cp_1)^{1/(1/2-1/p_1)}$.
\end{proof}

\begin{remark}
The constant in \eqref{c-b-const} is optimal. This can be seen as
follows. By Lemma 4.3 in \cite{BCPY}, $\mathsf h^c_{p'}(\M)$ embeds
into $(\mathsf h^c_2(\M),\mathsf h^c_1(\M))_{\theta}$ with constant
independent of $p'$. So $\mathsf h_{p'}(\M)$ embeds into $(\mathsf
h_2(\M),\mathsf h_1(\M))_{\theta}$ with constant independent of
$p'$. Now by Theorem 4.1 in \cite{Ran07}, $L_{p'}(\M)$ embeds into
$\mathsf h_{p'}(\M)$, hence into $(\mathsf h_2(\M),\sf
h_1(\M))_{\theta}$ with optimal constant $c/(p'-1)$. Then by
duality, $((\mathsf h_2(\M))^*,(\mathsf h_1(\M))^*)_{\theta}$ embeds
into $(L_{p'}(\M))^*=L_p(\M)$ with best constant $cp$. At last, by
$\mathsf{bmo}(\M)\subset(\mathsf h_1(\M))^*$ in \cite{Per09},
$(\mathsf h_2(\M),\mathsf{bmo}(\M))_{\theta}$ embeds into
$L_{p}(\M)$ with optimal constant $cp$.
\end{remark}

\begin{remark}\label{cjnb-t-c-jmb}
We can directly compare the norms $\|\cdot\|_{\mathsf{bmo}_p}$ and
$\mathsf b_p(\cdot)$ directly for $1<p<\infty$ by using Theorem
\ref{c-j-n-b}.
 \end{remark}

Let us justify this remark. We first deal with the case
$2<p<\infty$. Fix $n$, for any $b\in \M_n$ with $\|b\|_{p}\leq1$, by
the noncommutative Burkholder inequality \cite{JuXu03}, we have
$$\|(x-x_n)b\|_{\mathsf{h}^c_p}\leq cp\|(x-x_n)b\|_{p},\quad\|b(x-x_n)\|_{\mathsf{h}^r_p}\leq cp\|b(x-x_n)\|_{p},$$
hence
$$\|(x-x_n)b\|_{\mathsf{h}^c_p},\;\|b(x-x_n)\|_{\mathsf{h}^r_p}\leq
cp\mathsf b_p(x)$$ Then by Theorem \ref{c-j-n-b},
\begin{align*}
\|x\|_{\mathsf{bmo}_p}\leq cp\mathsf b_p(x).
\end{align*}
Another direction can be done by the way in Theorem
\ref{c-jm-j-n-b},
$$\mathsf{b}_p(x)\leq cp\|x\|_{\sf{bmo}}\leq cp\|x\|_{\mathsf{bmo}_p}.$$
 For the case $1<p<2$. The trivial part
\begin{align*}
\mathsf b_p(x)\leq c\|x\|_{\mathsf{bmo}_p}
\end{align*}
follows from the noncommutative Burkholder inequality in
\cite{JuXu03}. Now let us prove the inverse one. Take $b\in \M_n$
with $\|b\|_2\leq1$. By H\"older's inequality, we have
\begin{align*}
&\|(x-x_n)b\|^2_{2}=\tau(b^{2/{p'}}(x-x_n)^*(x-x_n)b^{2/p})\\
&\leq\|b^{2/{p'}}(x-x_n)^*\|_{{p'}}\|(x-x_n)b^{2/p}\|_{p}
\end{align*}
and
\begin{align*}
&\|b(x-x_n)\|^2_{2}=\tau((x-x_n)^*b^{2/{p'}}b^{2/p}(x-x_n))\\
&\leq\|(x-x_n)^*b^{2/{p'}}\|_{{p'}}\|b^{2/p}(x-x_n)\|_{p}.
\end{align*}
So by the result in Theorem \ref{c-j-n-b} for $2<p'<\infty$, we have
\begin{align*}
&\|b(x-x_n)\|^2_{2},\|(x-x_n)b\|^2_{2}\\
&\leq\max\{\|b^{2/{p'}}(x-x_n)^*\|_{{p'}},\|(x-x_n)^*b^{2/{p'}}\|_{{p'}}\}\\
&\quad\cdot\max\{\|(x-x_n)b^{2/p}\|_{p},\|b^{2/p}(x-x_n)\|_{p}\}\\
&\leq c\|x\|_{\mathsf{bmo}_{p'}}\cdot \mathsf b_p(x)\leq
cp'\|x\|_{\mathsf{bmo}_{2}}\cdot \mathsf b_p(x)
\end{align*}
Then by the definition of $\mathsf{bmo}_2(\M)$, we finish the proof
by Theorem \ref{c-j-n-b}
\begin{align*}
&\|x\|_{\mathsf{bmo}_p}\leq \|x\|_{\mathsf{bmo}_2}\leq cp'\mathsf
b_p(x).
\end{align*}

The following corollary extends Junge/Musat's theorem to all $0<p<\infty$. It can be proved similarly as  Theorem \ref{c-j-n-b}. However, using the identity $\BMO(\M)\simeq\mathsf{bmo}(\M)$
proved in \cite{Per09}, we give a simpler proof.

\begin{corollary}\label{c-jm-j-n-B}
For $0<p<\infty$, we have
$$\alpha^{-1}_p\|x\|_{\BMO}\leq\mathcal{B}_p(x)\leq \beta_p\|x\|_{\BMO},$$
where
\begin{align*}
\mathcal{B}_p(x)=\max\{
&\sup_n\sup_{b\in\M_n,\|b\|_p\leq1} \|(x-x_{n-1})b\|_{p},\\
&\sup_n\sup_{b\in\M_n,\|b\|_p\leq1}
\|b(x-x_{n-1})\|_{p}\}.
\end{align*}
The constant $\alpha_p$ and $\beta_p$ have the same orders as those
in Theorem \ref{c-j-n-b}.
\end{corollary}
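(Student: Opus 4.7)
The plan is to exploit the identification $\BMO(\M)\simeq\mathsf{bmo}(\M)$ from \cite{Per09} and reduce Corollary \ref{c-jm-j-n-B} to Theorem \ref{c-jm-j-n-b}. The only bridge required is a comparison between $\mathcal{B}_p(x)$, which features the differences $x-x_{n-1}$, and the quantity $\mathsf{b}_p(x)$ of Theorem \ref{c-jm-j-n-b}, which features $x-x_n$ together with the diagonal term $\sup_n\|dx_n\|_\infty$. This comparison will rest on the elementary splitting $x-x_{n-1}=(x-x_n)+dx_n$ and on the inclusion $\M_n\subset\M_{n+1}$; since both moves incur only universal multiplicative constants, the orders of $\alpha_p$ and $\beta_p$ supplied by Theorem \ref{c-jm-j-n-b} will transfer intact.

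For $\mathsf{b}_p(x)\lesssim\mathcal{B}_p(x)$, I would fix $b\in\M_n$ with $\|b\|_p\le 1$ and use $\M_n\subset\M_{n+1}$ to view $(x-x_n,b)$ as an admissible pair at index $n+1$ in $\mathcal{B}_p$, which yields $\|(x-x_n)b\|_p\le\mathcal{B}_p(x)$ (and symmetrically on the row side). For the diagonal piece, I would invoke the standard characterization $\|a\|_\infty=\sup_{b\in\M_n,\,\|b\|_p\le 1}\|ab\|_p$ for $a\in\M_n$, applied to $a=dx_n$, combined with
$$dx_n\,b \;=\; \mathcal{E}_n\bigl((x-x_{n-1})\,b\bigr)\qquad(b\in\M_n),$$
which follows from $\mathcal{E}_{n-1}(x-x_{n-1})=0$ and $\mathcal{E}_n(xb)=x_nb$, together with the $L_p$-contractivity of $\mathcal{E}_n$; this chain gives $\|dx_n\|_\infty\le\mathcal{B}_p(x)$.

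The reverse inequality $\mathcal{B}_p(x)\le 2\,\mathsf{b}_p(x)$ is then an immediate triangle-plus-H\"older estimate: for $b\in\M_n$ with $\|b\|_p\le1$,
$$\|(x-x_{n-1})b\|_p\;\le\;\|(x-x_n)b\|_p+\|dx_n\|_\infty\|b\|_p\;\le\; 2\,\mathsf{b}_p(x),$$
and analogously for $\|b(x-x_{n-1})\|_p$. Chaining these two comparisons with Theorem \ref{c-jm-j-n-b} and with $\|x\|_{\BMO}\simeq\|x\|_{\mathsf{bmo}}$ from \cite{Per09} then delivers the desired two-sided bound, with $\alpha_p$ and $\beta_p$ of the claimed orders. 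The only step that deserves a small sanity check is the identity $dx_n\,b=\mathcal{E}_n((x-x_{n-1})b)$ and the application of the $\sup$-characterization of $\|\cdot\|_\infty$ restricted to $\M_n$; everything else is a routine index shift and triangle inequality.
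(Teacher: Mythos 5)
Your reduction to Theorem \ref{c-jm-j-n-b} via Perrin's identity $\BMO(\M)\simeq\mathsf{bmo}(\M)$, the index shift $\M_n\subset\M_{n+1}$, and the splitting $x-x_{n-1}=(x-x_n)+dx_n$ is exactly the paper's strategy for $2\le p<\infty$ (the paper uses the same triangle-inequality comparison with $\mathsf b_p$ and then quotes Theorem \ref{c-jm-j-n-b}). For $1\le p<\infty$ your extra ingredient, namely $\|dx_n\|_\infty\le\mathcal B_p(x)$ via $dx_n\,b=\mathcal{E}_n\bigl((x-x_{n-1})b\bigr)$ and the characterization $\|a\|_\infty=\sup_{b\in\M_n,\|b\|_p\le1}\|ab\|_p$, is correct and gives the full comparison $\mathsf b_p(x)\lesssim\mathcal B_p(x)$, so in that range your argument is sound.

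However, there is a genuine gap for $0<p<1$, which is part of the claimed range. The step ``$\|dx_n\,b\|_p=\|\mathcal{E}_n((x-x_{n-1})b)\|_p\le\|(x-x_{n-1})b\|_p$ by $L_p$-contractivity of $\mathcal{E}_n$'' fails there: conditional expectations are not bounded on $L_p$ for $0<p<1$, already in the commutative case (with the trivial $\sigma$-algebra, $f=n\mathds{1}_{[0,1/n]}$ has $\|f\|_p\to0$ while $\|\mathbb{E}f\|_p=1$). So your comparison $\mathsf b_p(x)\lesssim\mathcal B_p(x)$, and hence the lower bound $\|x\|_{\BMO}\le\alpha_p\,\mathcal B_p(x)$, is not established for $0<p<1$, and no easy substitute is available along this route (Hölder duality also breaks below $p=1$). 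The paper avoids this entirely: for $0<p<2$ it does not pass through $\mathsf b_p$ at all, but repeats the interpolation argument of Theorem \ref{c-jm-j-n-b}, viewing $x-x_{n-1}$ and $(x-x_{n-1})^*$ as operators $L_p(\M_n)\to L_p(\M)$ and $L_{p_1}(\M_n)\to L_{p_1}(\M)$ for some $p_1>2$, interpolating to $p=2$ where $\mathcal B_2(x)=\|x\|_{\BMO}$, which yields $\alpha_p\le C^{1/p-1/2}$ without ever applying $\mathcal{E}_n$ on $L_p$, $p<1$. Two minor further points: for $p<1$ the triangle inequality you use in $\mathcal B_p(x)\le2\,\mathsf b_p(x)$ is only a quasi-triangle inequality and costs a factor like $2^{1/p}$, whereas the upper bound for $0<p<2$ follows with $\beta_p=1$ directly from Hölder (factor $b=b_0b_1$ and dominate by $\mathcal B_2(x)=\|x\|_{\BMO}$); and your chain only gives $\alpha_p\le C$ rather than $\alpha_p=1$ for $p\ge2$, which is harmless for the stated ``same orders'' but worth noting since the direct Hölder argument gives the sharp constant.
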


\begin{proof}
For $2\leq p<\infty$, it is very easy to get
$$\mathcal{B}_p(x)\leq \mathrm b_p(x)\leq cp\|x\|_{\mathsf{bmo}}\leq cp\|x\|_{\mathcal{BMO}}$$
from the triangular inequality
$$\|(x-x_{n-1})b\|_p\leq\|(x-x_n)b\|_p+\|(x_n-x_{n-1})b\|_p,$$
with $b\in \M_n$ and $\|b\|_p\leq1$. And the rest of the proof is
the same to Theorem \ref{c-jm-j-n-b}.
\end{proof}

\begin{remark}\label{counter example2}
The following example 
shows that Junge/Musat's John-Nirenberg inequality does not hold for
$\mathsf{bmo}^c$ or $\BMO^c$. The example is the same as the one
given in Remark 3.20 of \cite{JuMu07}. Let $n$ be a positive integer
and consider the von Neumann algebra
$$\M=L_{\infty}(\mathbb{T})\bar{\otimes}M_{n},$$
where $M_n$ is the algebra of $n\times n$ matrices with normalized
trace. For $k\geq1$ let ${\mathcal F}_k$ be the $\sigma$-algebra
generated by dyadic intervals in $\mathbb{T}$ of length $2^{-k}$.
Denote by $\M_k$ the subalgebra $L_{\infty}(\mathbb{T},{\mathcal
F}_k)\bar{\otimes}M_{n}$ of $\M$ and let $\mathcal{E}_k=\mathbb
E_k\otimes id_{M_n}$ be the conditional expectation onto $\M_k$. Let
$r_k$ be the $k$-th Rademacher function on $\mathbb{T}$ and consider
$$x=\sum^n_{k=1}r_k\otimes e_{1k}.$$
Then $x$ is a martingale relative to the filtration
$(\mathcal{M}_k)_{k\geq1}$ and the martingale differences are given
by $dx_k=r_k\otimes e_{1k}$. A simple calculation shows that
$$\sup_m\|x-x_m\|_p=(n-1)^{\frac 12}n^{-\frac 1p},$$
while
$$\|x\|_{\mathsf{bmo}^c}=\sup_m\left\|\sum^n_{k=m+1}\mathcal{E}_m|d_kx|^2\right\|^{\frac 12}_{\infty}=1.$$
Let $p>2$. Then for any $c>0$, there exists $n\geq1$ such that
$(n-1)^{1/2}n^{-1/p}>c$. Hence
$$\sup_m\sup_{b\in \M_m,\|b\|p\leq1}\|(x-x_m)b\|_p\geq\sup_m\|x-x_m\|_p>>\|x\|_{\mathsf{bmo}^c}.$$
\end{remark}

\subsection{A fine version}

Now we can formulate the fine version of the column (resp. row)
John-Nirenberg inequality.

\begin{definition} For $0<p<\infty$, we define
 $$\mathsf{bmo}^c_{p,\rm{pr}}(\M)=\big\{x\in
 L_1(\M):\|x\|_{\mathsf{bmo}^c_{p,\rm{pr}}}<\infty\big\}$$
with
 $$\|x\|_{\mathsf{bmo}^c_{p,\rm{pr}}}=\max\big\{\|\mathcal{E}_1(x)\|_{\infty},\quad\sup_n\sup_{e\in \mathcal{P}(\M_n)}
\|(x-x_{n})\frac{e}{(\tau(e))^{{1}/{p}}}\|_{\mathsf{h}^c_{p}}\big\}.$$
Similarly,
$$\mathsf{bmo}^r_{p,\rm{pr}}(\M)=\{x:x^*\in\mathsf{bmo}^c_{p,\rm{pr}}(\M)\}\;\textrm{ with }\;
\|x\|_{\mathsf{bmo}_{p,\rm{pr}}^r}=\|x^*\|_{\mathsf{bmo}_{p,\rm{pr}}^c}.$$
Finally,
$$\mathsf{bmo}_{p,\rm{pr}}(\M)=\mathsf{bmo}^c_{p,\rm{pr}}(\M)\cap\mathsf{bmo}^r_{p,\rm{pr}}(\M)\cap\mathsf{bmo}^d(\M)$$
equipped with
$$\|x\|_{\mathsf{bmo}_{p,\rm{pr}}}=\max\{\|x\|_{\mathsf{bmo}^c_{p,\rm{pr}}},\|x\|_{\mathsf{bmo}_{p,\rm{pr}}^r},\|x\|_{\mathsf{bmo}^d}\}.$$

\end{definition}

The fine version of the column (resp. row) John-Nirenberg inequality
is stated as follows.
\begin{theorem}\label{f-j-n-b} For all $0<p<\infty$, we have
$$\alpha^{-1}_p\|x\|_{\mathsf{bmo}^c}\leq\|x\|_{\mathsf{bmo}^c_{p,\rm{pr}}}\leq\beta_p\|x\|_{\mathsf{bmo}^c}.$$
The constants $\alpha_p$ and $\beta_p$ have the same properties as
those in Theorem \ref{c-j-n-b}. The same inequalities hold for
$\|\cdot\|_{\mathsf{bmo}^r}$ and
$\|\cdot\|_{\mathsf{bmo}^r_{p,\rm{pr}}}$.
\end{theorem}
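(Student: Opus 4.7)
Proof plan. The upper bound is immediate: each element $e/\tau(e)^{1/p}$ with $e\in\mathcal{P}(\M_n)$ lies in the unit ball of $L_p(\M_n)$, so $\|x\|_{\mathsf{bmo}^c_{p,\rm{pr}}}\le\|x\|_{\mathsf{bmo}^c_p}$, and Theorem~\ref{c-j-n-b} supplies the required $\beta_p\|x\|_{\mathsf{bmo}^c}$. The substance of the proof is the lower bound, which I will handle in two regimes, split at $p=2$.

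For $2\le p<\infty$, set $S_n:=\sum_{k>n}\mathcal{E}_{k-1}|d_kx|^2$. The standard martingale-difference identity gives $s_c((x-x_n)e)^2=eS_ne$, and hence $\|(x-x_n)e\|_{\mathsf{h}^c_2}^2=\tau(eS_ne)$ and $\|(x-x_n)e\|_{\mathsf{h}^c_p}^2=\|eS_ne\|_{p/2}$. Inside the reduced algebra $e\M e$ equipped with the normalized trace $\tau_e=\tau(e)^{-1}\tau$, applying H\"{o}lder's inequality $\|\cdot\|_1^{(e\M e)}\le\|\cdot\|_{p/2}^{(e\M e)}$ (valid because $p/2\ge 1$) to $eS_ne$ rearranges to
\[
\Bigl\|(x-x_n)\frac{e}{\tau(e)^{1/2}}\Bigr\|_{\mathsf{h}^c_2}\le\Bigl\|(x-x_n)\frac{e}{\tau(e)^{1/p}}\Bigr\|_{\mathsf{h}^c_p}.
\]
Taking the supremum over $n$ and $e\in\mathcal{P}(\M_n)$ (the $\|\mathcal{E}_1x\|_\infty$ piece is common to both norms) yields $\|x\|_{\mathsf{bmo}^c}\le\|x\|_{\mathsf{bmo}^c_{p,\rm{pr}}}$, i.e.\ $\alpha_p=1$ in this range.

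For $0<p\le 2$ I will first establish the stronger $\|x\|_{\mathsf{bmo}^c_p}\le\|x\|_{\mathsf{bmo}^c_{p,\rm{pr}}}$ and then compose with Theorem~\ref{c-j-n-b}. Given $a\in\M_n$ with $\|a\|_p\le 1$, polar-decompose $a=u|a|$ and approximate $|a|$ by a simple spectral element $\sum_i\lambda_ie_i$ with pairwise orthogonal projections $e_i\in\M_n$ sitting under $u^*u$ and $\sum_i\lambda_i^p\tau(e_i)=\|a\|_p^p$. Setting $b_i:=S_n^{1/2}ue_i$, we have $S_n^{1/2}u|a|=\sum_i\lambda_ib_i$, and the orthogonality $e_ie_j=0$ transmits to the row-orthogonality $b_ib_j^*=S_n^{1/2}u\,e_ie_j\,u^*S_n^{1/2}=0$ for $i\ne j$. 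Consequently $\bigl|(\sum_i\lambda_ib_i)^*\bigr|^2=\sum_i\lambda_i^2 b_ib_i^*$, so
\[
\|(x-x_n)a\|_{\mathsf{h}^c_p}^p=\Bigl\|\sum_i\lambda_i^2 b_ib_i^*\Bigr\|_{p/2}^{p/2}\le\sum_i\lambda_i^p\|b_i\|_p^p,
\]
where the inequality is the $(p/2)$-quasinorm subadditivity of $L_{p/2}$, valid precisely because $p/2\le 1$. The projection $f_i:=ue_iu^*\in\mathcal{P}(\M_n)$ has $\tau(f_i)=\tau(e_i)$ and $\|b_i\|_p=\|(x-x_n)f_i\|_{\mathsf{h}^c_p}\le\tau(e_i)^{1/p}\|x\|_{\mathsf{bmo}^c_{p,\rm{pr}}}$, and summing gives $\|(x-x_n)a\|_{\mathsf{h}^c_p}\le\|a\|_p\|x\|_{\mathsf{bmo}^c_{p,\rm{pr}}}\le\|x\|_{\mathsf{bmo}^c_{p,\rm{pr}}}$. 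Taking the supremum over $a$ and invoking Theorem~\ref{c-j-n-b} yields $\alpha_p\le C^{1/p-1/2}$.

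The crux, and the reason for the split at $p=2$, is the passage from a general element of the $L_p$-unit ball to a projection: for $p\le 2$ the row-orthogonality of the $b_i$ collapses cleanly via the $(p/2)$-subadditivity of $L_{p/2}$, whereas for $p>2$ that mechanism fails (since $p/2>1$) and one is forced to invoke H\"{o}lder's inequality inside the reduced algebra $e\M e$ instead. The corresponding statement for $\mathsf{bmo}^r_{p,\rm{pr}}$ is obtained by the symmetric argument applied to $x^*$.
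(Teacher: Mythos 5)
Your proof is correct, and for the crucial lower bound it follows a genuinely different route from the paper's. The paper proves the inverse inequality only for $0<p\le 1$, by approximating $a\in\M_n$ with $\|a\|_p\le1$ by $p$-convex combinations $\sum_k\lambda_k e_k/\tau(e_k)^{1/p}$ of normalized projections with $\sum_k|\lambda_k|^p\le1$ and using the $p$-triangle inequality in $\mathsf{h}^c_p$; the range $1<p<\infty$ is then reduced to the case $p=1$ via H\"{o}lder ($\|x\|_{\mathsf{bmo}^c_{1,\mathrm{pr}}}\le\|x\|_{\mathsf{bmo}^c_{p,\mathrm{pr}}}$), with Theorem \ref{c-j-n-b} supplying the constants. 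You instead argue directly in each range. For $2\le p<\infty$, the identity $s_c((x-x_n)e)^2=eS_ne$ together with H\"{o}lder in $e\M e$ under the normalized trace gives $\alpha_p=1$; note that the paper's reduction to $p=1$, as written, only yields a constant of order $C^{1/2}$ in this range, so an argument like yours is what actually delivers the stated $\alpha_p=1$ (you do use implicitly that $\sup_{e\in\mathcal{P}(\M_n)}\tau(e\,\mathcal{E}_n|x-x_n|^2)/\tau(e)=\|\mathcal{E}_n|x-x_n|^2\|_\infty$, which deserves one line: test on spectral projections of $\mathcal{E}_n|x-x_n|^2$, which lie in $\M_n$). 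For $0<p\le2$, your polar-decomposition scheme ($a=u|a|$, $b_i=S_n^{1/2}ue_i$, row-orthogonality, $(p/2)$-subadditivity, and the transfer $\|b_i\|_p=\|(x-x_n)ue_iu^*\|_{\mathsf{h}^c_p}$) replaces the paper's approximation claim, and is in fact the more careful treatment: for non-normal $a$ the phase $u$ cannot be absorbed into scalar coefficients (for instance the normalized matrix unit $2^{1/p}e_{12}$ in $M_2$ does not lie in the closed $p$-convex hull of normalized projections), whereas passing to $f_i=ue_iu^*$ handles it exactly, keeps the coefficient sum equal to $\|a\|_p^p$, and covers $1<p\le2$ in the same stroke. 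Only cosmetic points remain: arrange $e_i\le u^*u$ by taking spectral projections of $|a|$ away from $0$, and justify the passage from simple approximants of $|a|$ to general $a$ by lower semicontinuity of $\|\cdot\|_p$ under convergence in measure.
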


\begin{proof}
We first consider the case $0<p\leq1$. By Theorem \ref{c-j-n-b}, the
trivial part
$$\|x\|_{\mathsf{bmo}^c_{p,\mathrm{pr}}}\leq\|x\|_{\mathsf{bmo}^c_{p}}\leq\|x\|_{\mathrm{bmo}^c}$$
follows from the fact that ${e}/{(\tau(e))^{1/p}}\in \M_n$ and its
$L_p$-norm equals 1. Now we turn to the proof of the inverse
inequality. Since any $a\in \M_n$ with $\|a\|_p\leq1$ can be
approximated by sums $\sum_k \lambda_k{e_k}/{(\tau(e_k))^{1/p}}$
with $e_k$'s in $\M_n$ and $\sum_k|\lambda_k|^p\leq1$. Thus we can
assume that $a$ itself is such a sum. Then
\begin{align*}
\|(x-x_n)a\|^p_{\mathsf
h^c_p}&=\|\sum_k\lambda_k(x-x_n)\frac{e_k}{(\tau(e_k))^{1/p}}\|^p_{\mathsf
h^c_p}\\
&\leq
\sum_k|\lambda_k|^p\|(x-x_n)\frac{e_k}{(\tau(e_k))^{1/p}}\|^p_{\mathsf
h^c_p}\\
&\leq
\sum_k|\lambda_k|^p\|x\|^p_{\mathsf{bmo}^c_{p,\mathrm{pr}}}\leq\|x\|^p_{\mathsf{bmo}^c_{p,\mathrm{pr}}}.
\end{align*}
Therefore by Theorem \ref{c-j-n-b},
$$\|x\|_{\mathsf{bmo}^c}\leq C^{1/p-1/2}\|x\|_{\mathsf{bmo}^c_p}\leq C^{1/p-1/2}\|x\|_{\mathsf{bmo}^c_{p,\rm{pr}}}.$$

Now let $1<p<\infty$. Again, because of the fact that
${e}/{(\tau(e))^{1/p}}\in \M_n$ and its $L_p$-norm equals 1, by
Theorem \ref{c-j-n-b},
\begin{align}\label{c-best}
\|x\|_{\mathsf{bmo}^c_{p,\mathrm{pr}}}\leq\|x\|_{\mathsf{bmo}^c_{p}}\leq
c_1p\|x\|_{\mathsf{bmo}^c}.
\end{align}
We exploit the result for $p=1$ to prove the inverse inequality. By
H\"{o}lder's inequality, we have
\begin{align*}
\|x\|_{\mathsf{bmo}^c_{1,\mathrm{pr}}}\leq\|x\|_{\mathsf{bmo}^c_{p,\mathrm{pr}}}.
\end{align*}
We end the proof by Theorem \ref{c-j-n-b} and the result for $p=1$,
\begin{align*}
\|x\|_{\mathsf{bmo}^c}\leq C^{1/p-1/2}\|x\|_{\mathsf{bmo}^c_{1}}\leq
C^{1/p-1/2}\|x\|_{\mathsf{bmo}^c_{1,\mathrm{pr}}}\leq
C^{1/p-1/2}\|x\|_{\mathsf{bmo}^c_{p,\mathrm{pr}}}.
\end{align*}
\end{proof}

Now we give the distributional form of the John-Nirenberg inequality
for $\mathsf{bmo}^c(\M)$ and $\mathsf{bmo}^r(\M)$.

\begin{theorem}\label{c-weak}
Let $x\in \mathsf{bmo}^c(\M)$. Then for all natural numbers $n\geq1$,
all $e\in \mathcal{P}(\M_n)$ and for all $\lambda>0$, we have
$$\frac{1}{\tau(e)}\tau(\mathds{1}_{(\lambda,\infty)}(s_c((x-x_n)e)))\leq 2\exp(-\frac{c\lambda}{\|x\|_{\mathsf{bmo}^c}}),$$
with $c$ an absolute constant. Here
$\mathds{1}_{(\lambda,\infty)}(a)$ denotes the spectral projection
of a positive operator $a$ corresponding to the interval
$(\lambda,\infty)$.
\end{theorem}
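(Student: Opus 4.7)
My plan is the standard one for passing from polynomial moment bounds $\|\cdot\|_p = O(p)$ to exponential tail bounds, now adapted to the tracial noncommutative setting via spectral projections.

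Fix $n \geq 1$, $e \in \mathcal{P}(\M_n)$, $\lambda > 0$, and abbreviate $M = \|x\|_{\mathsf{bmo}^c}$ and $Y = s_c((x-x_n)e)$, which is a positive operator. First I would invoke Theorem \ref{f-j-n-b} (using $\beta_p \leq cp$ for $2 \leq p < \infty$) to obtain, for every such $p$,
$$\|Y\|_p = \|(x-x_n)e\|_{\mathsf{h}^c_p} \leq cp\,(\tau(e))^{1/p} M.$$
This is the only analytic input; everything below is bookkeeping.

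Next, the tracial Chebyshev inequality applied to $Y \geq 0$ gives
$$\tau\bigl(\mathds{1}_{(\lambda,\infty)}(Y)\bigr) \leq \lambda^{-p}\,\tau(Y^p) = \lambda^{-p}\|Y\|_p^p \leq \Bigl(\frac{cpM}{\lambda}\Bigr)^p \tau(e),$$
so that
$$\frac{1}{\tau(e)}\,\tau\bigl(\mathds{1}_{(\lambda,\infty)}(Y)\bigr) \leq \Bigl(\frac{cpM}{\lambda}\Bigr)^p.$$

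Finally I would optimize the right-hand side in $p \in [2,\infty)$. If $\lambda \geq 2ceM$, the choice $p = \lambda/(ceM) \geq 2$ makes the base equal to $1/e$ and yields the bound $\exp(-\lambda/(ceM))$, which is of the claimed form. If instead $\lambda < 2ceM$, the conclusion is trivial because $2\exp(-c'\lambda/M) \geq 1$ for a sufficiently small absolute constant $c'$, while the left-hand side is at most $1$. Absorbing these two regimes into a single absolute constant $c$ and picking up the factor $2$ to cover the small-$\lambda$ case produces the stated inequality. The analogous row statement then follows by applying the same argument to $x^*$. I do not foresee a real obstacle: the linear growth $\beta_p \lesssim p$ supplied by Theorem \ref{f-j-n-b} is precisely what makes the optimization work, and the factor $(\tau(e))^{1/p}$ is tailored exactly to cancel the $1/\tau(e)$ normalization.
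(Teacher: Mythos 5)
Your proposal is correct and follows essentially the same route as the paper: apply the fine John--Nirenberg inequality (Theorem \ref{f-j-n-b}) to get $\|(x-x_n)e\|_{\mathsf{h}^c_p}\le cp\,(\tau(e))^{1/p}\|x\|_{\mathsf{bmo}^c}$, use tracial Chebyshev, and optimize with $p\sim\lambda/\|x\|_{\mathsf{bmo}^c}$, treating small $\lambda$ trivially. The only differences are cosmetic (the paper chooses $p=\lambda/(2c_1)$ to make the base $1/2$ rather than $1/e$), so no further comment is needed.
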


\begin{proof}
By homogeneity, we can assume $\|x\|_{\mathsf{bmo}^c}=1$. We first
deal with the case $\lambda\geq 2c_1$, where $c_1$ is the constant
in inequality (\ref{c-best}). Let $p=\lambda/{(2c_1)}\geq1$, by
Chebychev's inequality and Theorem \ref{f-j-n-b},
\begin{align*}
&\tau(\mathds{1}_{(\lambda,\infty)}(s_c((x-x_n)e)))\leq
\tau(e)\frac{\|(x-x_n)e\|^p_{\mathsf h^c_p}}{\lambda^p}\\
&\leq
\tau(e)(c_1p\lambda^{-1})^p=\tau(e)\exp(p\ln(c_1p\lambda^{-1}))=\tau(e)\exp(-\frac{\ln2}{2c_1}\lambda).
\end{align*}
When $0<\lambda<2c_1$,
\begin{align*}
\frac{1}{\tau(e)}\tau(\mathds{1}_{(\lambda,\infty)}(s_c((x-x_n)e)))\leq1<2\exp(-\frac{\ln2}{2c_1}\lambda).
\end{align*}
Therefore, we obtain the desired result by letting
$c={\ln2}/{(2c_1)}$.
\end{proof}

Based on the crude version of Junge/Musat's John-Nirenberg
inequality in Theorem \ref{c-jm-j-n-b} (resp. Corollary
\ref{c-j-n-B}) for $\sf{bmo}(\M)$ (resp. $\mathcal{BMO}(\M)$), the
argument in the proof of Theorem \ref{f-j-n-b} can be adapted to get
the fine version of Junge/Musat's John-Nirenberg inequality.

\begin{corollary}\label{f-jm-j-n-b}
For all $0<p<\infty$, we have
$$\alpha^{-1}_p\|x\|_{\mathsf{bmo}}\leq\mathcal{P}\mathsf{b}_p(x)\leq \beta_p\|x\|_{\mathsf{bmo}},$$ where
\begin{align*}
\mathcal{P}\mathsf{b}_p(x)=\max\{&\sup_{n}\|(dx_n)_n\|_{{\infty}},\quad
\sup_n\sup_{e\in\M_n} \|(x-x_{n})\frac{e}{(\tau(e))^{1/p}}\|_{p},\\
&\sup_n\sup_{e\in\M_n} \|\frac{e}{(\tau(e))^{1/p}}(x-x_{n})\|_{p}\}.
\end{align*}
The constants $\alpha_p$ and $\beta_p$ have the same orders as those
in Theorem \ref{c-j-n-b}.
\end{corollary}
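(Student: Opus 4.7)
I would copy the two-step scheme of the proof of Theorem~\ref{f-j-n-b}, replacing the conditional column Hardy norm $\|\cdot\|_{\mathsf h^c_p}$ by the plain $L_p$-norm and Theorem~\ref{c-j-n-b} by Theorem~\ref{c-jm-j-n-b}. The diagonal term $\sup_n\|(dx_n)_n\|_\infty$ is identical in $\mathsf b_p(x)$ and $\mathcal P\mathsf b_p(x)$, so the whole task reduces to controlling the column and row terms, and it splits naturally into the two ranges $0<p\le 1$ and $1<p<\infty$.

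\textbf{Case $0<p\le 1$.} For the trivial direction, $e/(\tau(e))^{1/p}$ lies in the unit ball of $L_p(\M_n)$, so $\mathcal P\mathsf b_p(x)\le\mathsf b_p(x)\le\|x\|_{\mathsf{bmo}}$ by Theorem~\ref{c-jm-j-n-b} (giving $\beta_p=1$). For the reverse, as in Theorem~\ref{f-j-n-b} I would use the density statement that any $a\in\M_n$ with $\|a\|_p\le 1$ is approximated by finite sums $\sum_k\lambda_k e_k/(\tau(e_k))^{1/p}$ with $e_k\in\mathcal P(\M_n)$ and $\sum_k|\lambda_k|^p\le 1$. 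The $p$-subadditivity of $\|\cdot\|_p^p$ for $p\le 1$ then gives
$$\|(x-x_n)a\|_p^p\le\sum_k|\lambda_k|^p\Bigl\|(x-x_n)\tfrac{e_k}{(\tau(e_k))^{1/p}}\Bigr\|_p^p\le\mathcal P\mathsf b_p(x)^p,$$
and symmetrically for the row term, so $\mathsf b_p(x)\le\mathcal P\mathsf b_p(x)$. Theorem~\ref{c-jm-j-n-b} yields $\|x\|_{\mathsf{bmo}}\le\alpha_p\mathcal P\mathsf b_p(x)$ with $\alpha_p\le C^{1/p-1/2}$.

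\textbf{Case $1<p<\infty$.} The upper bound is the same as above. For the lower bound I would use the Hölder factorization $e/\tau(e)=(e/(\tau(e))^{1/p})\cdot(e/(\tau(e))^{1/p'})$, noting that the second factor has unit $L_{p'}$-norm, to deduce $\|(x-x_n)e/\tau(e)\|_1\le\|(x-x_n)e/(\tau(e))^{1/p}\|_p$ and similarly for rows, whence $\mathcal P\mathsf b_1(x)\le\mathcal P\mathsf b_p(x)$. Combined with the previously established $p=1$ case this gives $\|x\|_{\mathsf{bmo}}\le C^{1/2}\mathcal P\mathsf b_p(x)$, handling the range $1<p<2$. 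When $p\ge 2$, the optimal $\alpha_p=1$ is recovered via the analogous factorization $e/(\tau(e))^{1/2}=(e/(\tau(e))^{1/p})(e/(\tau(e))^{1/2-1/p})$ with conjugate exponents $p$ and $2p/(p-2)$, giving $\mathcal P\mathsf b_2(x)\le\mathcal P\mathsf b_p(x)$ and hence $\|x\|_{\mathsf{bmo}}\le\mathcal P\mathsf b_2(x)\le\mathcal P\mathsf b_p(x)$ by Theorem~\ref{c-jm-j-n-b}.

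\textbf{Main obstacle.} The only genuinely non-bookkeeping step is the approximation of a general (complex, not necessarily selfadjoint) $a\in L_p(\M_n)$ in the unit ball by $\ell_p$-combinations of normalized projections for $0<p\le 1$. This requires the polar decomposition $a=u|a|$ in $\M_n$ combined with the spectral resolution of $|a|$, with the partial isometries and signs absorbed into the scalar coefficients $\lambda_k$ at worst at the cost of a universal factor that is swallowed by $\alpha_p$. This is the same density argument already used implicitly in the proof of Theorem~\ref{f-j-n-b}, so I would import it verbatim; everything else is Hölder-type bookkeeping.
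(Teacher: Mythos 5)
Your proposal follows exactly the route the paper intends: the paper gives no separate proof of Corollary \ref{f-jm-j-n-b}, saying only that the argument of Theorem \ref{f-j-n-b} adapts once Theorem \ref{c-jm-j-n-b} replaces Theorem \ref{c-j-n-b}, and that is precisely your two-case scheme (extreme-point/$p$-subadditivity for $0<p\le1$, H\"older reduction for $p>1$), so the approach is the same and correct in outline. Two steps as written should be repaired. First, in the density step the partial isometry of the polar decomposition cannot be ``absorbed into the scalar coefficients'': if $a=u|a|$ and $|a|\approx\sum_k c_k e_k$, the summands $c_k u e_k$ are not multiples of projections. The lossless fix is to put the polar decomposition on the correct side for each term --- write $a=|a^*|u$ for the column term and $b=u|b|$ for the row term, so that $\|(x-x_n)e_k u\|_p\le\|(x-x_n)e_k\|_p$ and $\|u e_k(x-x_n)\|_p\le\|e_k(x-x_n)\|_p$ by contractivity of multiplication by $u$ --- or equivalently to replace $e_k$ by the projection $ue_ku^*$, which has the same trace; no universal factor is lost, and the error in the spectral approximation is controlled since $a\in\M_n$ and $x-x_n\in L_p(\M)$ for a finite trace. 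Second, for $p\ge2$ the inequality $\|x\|_{\mathsf{bmo}}\le\mathcal{P}\mathsf{b}_2(x)$ is not Theorem \ref{c-jm-j-n-b} itself, which takes the supremum over the whole unit ball of $L_2(\M_n)$; you also need the observation used in the paper around \eqref{key}, namely that for the positive operators $\mathcal{E}_n|x-x_n|^2$ and $\mathcal{E}_n|(x-x_n)^*|^2$ the $L_\infty$-norm equals the supremum of $\tau(e\,\cdot\,e)/\tau(e)$ over projections $e\in\mathcal{P}(\M_n)$ (their spectral projections realize it), the diagonal terms of $\mathsf{b}_2$ and $\mathcal{P}\mathsf{b}_2$ being identical by definition. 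With these two repairs your argument is complete; note also that since the corollary only asserts the same \emph{orders} for $\alpha_p$, the cruder reduction to $p=1$ (constant $C^{1/2}$) already suffices for all $p>1$, exactly as in the paper's own proof of Theorem \ref{f-j-n-b}.
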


\begin{corollary}\label{f-jm-j-n-B}
For $0<p<\infty$, we have
$$\alpha^{-1}_p\|x\|_{\BMO}\leq\mathcal{PB}_p(x)\leq\beta_p\|x\|_{\BMO},$$
where
\begin{align*}
\mathcal{PB}_p(x)=\max\{
&\sup_n\sup_{e\in\M_n} \|(x-x_{n-1})\frac{e}{(\tau(e))^{1/p}}\|_{p},\\
&\sup_n\sup_{e\in \M_n}
\|\frac{e}{(\tau(e))^{1/p}}(x-x_{n-1})\|_{p}\}.
\end{align*}
The constant $\alpha_p$ and $\beta_p$ have the same orders as those
in Theorem \ref{c-j-n-b}.
\end{corollary}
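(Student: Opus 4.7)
The plan is to adapt the argument used in the proof of Theorem \ref{f-j-n-b} to the $\BMO$ setting, feeding in the crude Junge/Musat inequality Corollary \ref{c-jm-j-n-B} as the main input in place of Theorem \ref{c-j-n-b}. The upper bound $\mathcal{PB}_p(x)\le\beta_p\|x\|_{\BMO}$ is immediate: for any $e\in\mathcal{P}(\M_n)$, the element $e/\tau(e)^{1/p}$ lies in the unit ball of $L_p(\M_n)$, so $\mathcal{PB}_p(x)\le\mathcal{B}_p(x)$ and Corollary \ref{c-jm-j-n-B} then delivers the stated $\beta_p$.

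For the reverse inequality I would split into three regimes. When $p\ge 2$, I would use the extreme-point characterization of the positive unit ball of $L_1(\M_n)$, whose extreme points are $e/\tau(e)$ with $e\in\mathcal{P}(\M_n)$. Duality gives
\[
\|\E_n|x-x_{n-1}|^2\|_\infty=\sup_{e}\frac{\tau(e|x-x_{n-1}|^2)}{\tau(e)}=\sup_{e}\left\|(x-x_{n-1})\frac{e}{\tau(e)^{1/2}}\right\|_2^2,
\]
and H\"older's inequality applied to $(x-x_{n-1})e=(x-x_{n-1})e\cdot e$ with exponents $p$ and $2p/(p-2)$ yields
\[
\left\|(x-x_{n-1})\frac{e}{\tau(e)^{1/2}}\right\|_2\le\left\|(x-x_{n-1})\frac{e}{\tau(e)^{1/p}}\right\|_p.
\]
Combined with the symmetric row estimate this shows $\|x\|_{\BMO}\le\mathcal{PB}_p(x)$, i.e. $\alpha_p=1$ for $p\ge 2$.

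For $0<p\le 1$ I would use the atomic/extreme-point structure of the unit ball of $L_p(\M_n)$, exactly as in the proof of Theorem \ref{f-j-n-b}: any $a\in\M_n$ with $\|a\|_p\le 1$ can be written (up to an absolute factor absorbed into the constants) as a sum $\sum_k\lambda_k e_k/\tau(e_k)^{1/p}$ with $e_k\in\mathcal{P}(\M_n)$ and $\sum_k|\lambda_k|^p\le 1$. The $p$-subadditivity of $\|\cdot\|_p^p$ then gives
\[
\|(x-x_{n-1})a\|_p^p\le\sum_k|\lambda_k|^p\left\|(x-x_{n-1})\frac{e_k}{\tau(e_k)^{1/p}}\right\|_p^p\le\mathcal{PB}_p(x)^p,
\]
and symmetrically on the right, so $\mathcal{B}_p(x)\le\mathcal{PB}_p(x)$; invoking Corollary \ref{c-jm-j-n-B} then yields $\|x\|_{\BMO}\le\alpha_p\mathcal{PB}_p(x)$ with $\alpha_p\le C^{1/p-1/2}$. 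For the remaining range $1<p<2$, the elementary H\"older bound $\|(x-x_{n-1})e/\tau(e)\|_1\le\|(x-x_{n-1})e/\tau(e)^{1/p}\|_p$ (immediate from $e=e\cdot e$) produces the monotonicity $\mathcal{PB}_1(x)\le\mathcal{PB}_p(x)$, so we may compose with the $p=1$ conclusion above.

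I expect the main obstacle to be the projection-atomic approximation in the $0<p\le 1$ case: writing a general, possibly non-self-adjoint $a\in\M_n$ as $\sum_k\lambda_k e_k/\tau(e_k)^{1/p}$ requires combining polar decomposition with the spectral theorem on the positive parts of $a$, and one must verify that the resulting absolute factor can be absorbed without degrading the orders of $\alpha_p$ and $\beta_p$. Once this step is granted, the whole proof reduces to a clean combination of H\"older's inequality with the crude Junge/Musat inequality already established in Corollary \ref{c-jm-j-n-B}.
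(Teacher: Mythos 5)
Your proposal is correct and follows essentially the paper's own route: the paper proves Corollary \ref{f-jm-j-n-B} precisely by adapting the proof of Theorem \ref{f-j-n-b} (projection decomposition with $\sum_k|\lambda_k|^p\le 1$ for $0<p\le 1$, H\"older monotonicity to reduce $1<p<2$ to $p=1$) with the crude inequality of Corollary \ref{c-jm-j-n-B} as input, which is exactly what you do. Your direct extreme-point/H\"older argument for $p\ge2$ (yielding $\alpha_p=1$) and the polar-decomposition reduction to positive $a$ that you flag are the same devices the paper relies on elsewhere (the identity \eqref{key} and the approximation step taken for granted in the proof of Theorem \ref{f-j-n-b}), so no constant is degraded.
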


Again, based on Corollary \ref{f-jm-j-n-B}, by  arguments similar to
the proof of Thoerem \ref{c-weak}, we obtain the exponential
integrability form of the John-Nirenberg inequality for $\BMO(\M)$.

\begin{theorem}\label{weak-B}
Let $x\in \BMO(\M)$. Then for all natural numbers $n\geq1$, all $e\in
\mathcal{P}(\M_n)$ and for all $\lambda>0$, we have
$$\frac{1}{\tau(e)}\tau(\mathds{1}_{(\lambda,\infty)}(|(x-x_{n-1})e|)+\mathds{1}_{(\lambda,\infty)}(|e(x-x_{n-1})|))\leq 4\exp(-\frac{c\lambda}{\|x\|_{\BMO}})$$
with $c$ an absolute constant.
\end{theorem}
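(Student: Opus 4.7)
The plan is to mirror the argument of Theorem \ref{c-weak}, now using the two-sided estimate furnished by Corollary \ref{f-jm-j-n-B} in place of its column analogue. By homogeneity I may assume $\|x\|_{\BMO}=1$. Let $c_1$ denote the implicit constant such that $\mathcal{PB}_p(x)\leq c_1 p$ for every $p\geq 2$; this translates into
$$\|(x-x_{n-1})e\|_p \leq c_1 p\,\tau(e)^{1/p}\quad\text{and}\quad \|e(x-x_{n-1})\|_p \leq c_1 p\,\tau(e)^{1/p}$$
for every $n\geq 1$, every $e\in\mathcal{P}(\M_n)$, and every $p\geq 2$.

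The main step is Chebyshev's inequality applied separately to $|(x-x_{n-1})e|$ and $|e(x-x_{n-1})|$, and summing. For any $p\geq 2$ it yields
$$\tau\bigl(\mathds{1}_{(\lambda,\infty)}(|(x-x_{n-1})e|)+\mathds{1}_{(\lambda,\infty)}(|e(x-x_{n-1})|)\bigr)\leq 2\Bigl(\frac{c_1 p}{\lambda}\Bigr)^{\!p}\tau(e).$$
For $\lambda\geq 4c_1$ I would optimize by taking $p=\lambda/(2c_1)\geq 2$, so that $(c_1 p/\lambda)^{p}=2^{-p}=\exp(-\lambda\ln 2/(2c_1))$, yielding the desired exponential decay on this range.

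For the complementary range $0<\lambda<4c_1$ I would use the trivial estimate. Since $|(x-x_{n-1})e|^2=e(x-x_{n-1})^{*}(x-x_{n-1})e$ is supported on a subprojection of $e$, the first spectral projection is dominated by $e$; for the second, $|a|$ and $|a^{*}|$ are equimeasurable, so
$$\tau\bigl(\mathds{1}_{(\lambda,\infty)}(|e(x-x_{n-1})|)\bigr)=\tau\bigl(\mathds{1}_{(\lambda,\infty)}(|(x-x_{n-1})^{*}e|)\bigr)\leq \tau(e)$$
by the same support argument applied to $(x-x_{n-1})^{*}e$. Hence the left-hand side of the desired inequality, divided by $\tau(e)$, is at most $2$.

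Finally, setting $c=\ln 2/(4c_1)$ one checks that $4\exp(-c\lambda)\geq 2$ whenever $\lambda\leq 4c_1$, and that $4\exp(-c\lambda)\geq 2\exp(-\lambda\ln 2/(2c_1))$ whenever $\lambda\geq 4c_1$; so this single $c$ unifies the two regimes into the announced bound. The only step that requires real thought is the matching of constants across the trivial and the Chebyshev ranges; the rest is a direct transcription of the argument of Theorem \ref{c-weak}, now carried out simultaneously on the left and on the right.
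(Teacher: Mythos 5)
Your proposal is correct and is essentially the argument the paper intends: it invokes Corollary \ref{f-jm-j-n-B}, applies Chebyshev's inequality with the choice $p\simeq\lambda/c_1$ for large $\lambda$, and uses the support/equimeasurability bound $\tau(\mathds{1}_{(\lambda,\infty)}(|(x-x_{n-1})e|))\le\tau(e)$ (and likewise for the row term) in the small-$\lambda$ regime, exactly as in the proof of Theorem \ref{c-weak}. The constant matching you carry out is the only point the paper leaves implicit, and your choice $c=\ln 2/(4c_1)$ handles it correctly.
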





\section{atomic decomposition}

\subsection{A crude version of atoms} According to the crude version of the noncommutative John-Nirenberg inequality, we introduce the following

\begin{definition}\label{defi-c-atom}
For $1<q\leq\infty$, $a\in L_1(\M)$ is said to be a $(1,q,c)$-atom
with respect to $(\M_n)_{n\geq1}$, if there exist $n\geq1$ and a
factorization $a=yb$ such that
 \begin{enumerate}[(i)]
 \item $\E_n(y)=0$;
  \item $b\in L_{q'}(\M_n)$ and $\|b\|_{q'}\leq1$;
  \item  $\|y\|_{\mathsf{h}^c_q}\leq1$ for $1<q<\infty$;
$\|y\|_{\mathsf{bmo}^c}\leq1$ for $q=\infty$.
 \end{enumerate}
 Similarly, we define the notion of a $(1,q,r)$-atom with
$a=yb$ replaced by $a=by$.
\end{definition}

\begin{lemma}\label{h1at-to-h1}
Let $1<q\leq\infty$. If $a$ is a $(1,q,c)$-atom, then
$$\|a\|_{\mathsf{h}^c_1}\leq1.$$
The analogous inequality holds for $(1,q,r)$-atoms.
\end{lemma}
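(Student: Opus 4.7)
The plan is to reduce $s_c(a)$ to a simple expression in $s_c(y)$ and $b$, then apply H\"older's inequality in the finite case and an operator Kadison-Schwarz argument in the case $q=\infty$.

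First I would unpack the martingale differences of $a=yb$. Since $\E_n(y)=0$ and the filtration is increasing, $y_k=\E_k\E_n(y)=0$ for every $k\leq n$, so $dy_k=0$ on this range; equivalently $a_k=0$ and $da_k=0$ for $k\leq n$. For $k>n$ we have $b\in\M_n\subset\M_{k-1}\subset\M_k$, so $b$ can be pulled out of $\E_k$ and $\E_{k-1}$, giving $a_k=\E_k(yb)=y_k b$ and $da_k=(dy_k)b$. Consequently
\beq
s_c(a)^2=\sum_{k>n}\E_{k-1}(b^*|dy_k|^2b)=b^*\Bigl(\sum_{k>n}\E_{k-1}|dy_k|^2\Bigr)b=b^*s_c(y)^2b,
\eeq
so $\|s_c(a)\|_1=\|\,|s_c(y)b|\,\|_1=\|s_c(y)b\|_1$.

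For $1<q<\infty$ I would conclude immediately by H\"older's inequality:
\beqn
\|s_c(a)\|_1=\|s_c(y)b\|_1\leq\|s_c(y)\|_q\|b\|_{q'}=\|y\|_{\mathsf h^c_q}\|b\|_{q'}\leq 1.
\eeqn

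For $q=\infty$ a more delicate argument is needed because $s_c(y)$ need not be bounded; the $\mathsf{bmo}^c$ hypothesis only controls $\E_n|y|^2$, which equals $\E_n s_c(y)^2$ thanks to $y_n=0$ (cross terms in $|y|^2=|y-y_n|^2$ vanish under $\E_n$ by martingale orthogonality). Here I would invoke the operator Kadison-Schwarz inequality $\E_n(X^{1/2})\leq(\E_n X)^{1/2}$, valid for any $X\geq 0$. Applied to $X=b^*s_c(y)^2b$, together with $b\in\M_n$, it yields
\beqn
\E_n(|s_c(y)b|)\leq\bigl(b^*\E_n s_c(y)^2\,b\bigr)^{1/2}=\bigl(b^*\E_n|y|^2\,b\bigr)^{1/2}\leq\|y\|_{\mathsf{bmo}^c}|b|\leq|b|,
\eeqn
where the penultimate step uses $\E_n|y|^2\leq\|y\|_{\mathsf{bmo}^c}^2\mathbf 1$ together with the operator monotonicity of $t\mapsto t^{1/2}$. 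Taking the trace and using that $\E_n$ is trace-preserving,
\beqn
\|s_c(a)\|_1=\tau(|s_c(y)b|)=\tau\bigl(\E_n|s_c(y)b|\bigr)\leq\tau(|b|)=\|b\|_1\leq 1.
\eeqn

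The row version follows by passing to adjoints: $s_r(a)=s_c(a^*)$, and for a $(1,q,r)$-atom $a=by$ one has $a^*=y^*b^*$, where $y^*$ satisfies the column conditions with the same norms, so the column bound applies. The main conceptual hurdle is the $q=\infty$ case, where H\"older is not available; the Kadison-Schwarz trick combined with the identity $\E_n s_c(y)^2=\E_n|y|^2$ (which crucially uses $y_n=0$) is what makes that case go through.
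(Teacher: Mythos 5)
Your proof is correct and follows essentially the same route as the paper: the identity $s_c^2(a)=b^*s_c^2(y)b$ plus H\"older's inequality for $1<q<\infty$, and for $q=\infty$ the trace-preserving property of $\mathcal{E}_n$ combined with the operator Jensen/Kadison--Schwarz inequality for $t\mapsto t^{1/2}$, the module property $\mathcal{E}_n(b^*\cdot b)=b^*\mathcal{E}_n(\cdot)b$, and the bound $\mathcal{E}_n s_c^2(y)=\mathcal{E}_n|y|^2\leq\|y\|_{\mathsf{bmo}^c}^2$. Your write-up is in fact a cleaner rendering of the paper's (somewhat terse) $q=\infty$ computation, and the reduction of the row case to the column case by taking adjoints matches the paper as well.
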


\begin{proof}
We first deal with the case $1<q<\infty$. By definition, there
exists an $n$ such that the $(1,q,c)$-atom $a$ admits a
factorization $a=yb$ as in Definition \ref{defi-c-atom}. Then
$$s^2_c(a)=b^*\sum_{k>n}\mathcal{E}_{k-1}|dy_k|^2b=b^*s^2_c(y)b.$$
Thus by H\"{o}lder's inequality,
$$\|a\|_{\mathsf h^c_1}=\|s_c(a)\|_1\leq\|s_c(y)\|_q\|b\|_{q'}\leq1.$$
 For the case $q=\infty$, the calculation is a bit different,
\begin{align*}
\|a\|_{\mathsf{h}^c_1}&=\left\|b^*s^2_c(y)b\right\|^{1/2}_{{1/2}}
=\tau(\mathcal{E}_n(b^*s^2_c(y)b)^{1/2})\\
&\leq\tau((\mathcal{E}_n(b^*s_c(y)b))^{1/2})
\leq\|\mathcal{E}_n(s_c(y))\|_{\infty}\|b\|_1\\
& \leq\left\|y\right\|_{\mathsf{bmo}^c}\|b\|_{1}\leq1.
\end{align*}
We have used the trace preserving property of conditional
expectations in the fourth equality and the operator Jensen
inequality in the first inequality. For the second inequality, we
have used the property that
$\mathcal{E}_n\cdot\mathcal{E}_{k-1}=\mathcal{E}_n$ for all $k>n$
and H\"{o}lder's inequality.
\end{proof}

\begin{definition}

We define $\mathsf{h}^{c}_{1,\mathrm{at}_q}(\M)$ as the Banach space
of all $x\in L_1(\M)$ which admit a decomposition
$x=\sum_k\lambda_ka_k$, where for each $k$, $a_k$ a $(1,q,c)$-atom or
an element in the unit ball of $L_1(\M_1)$, and
$\lambda_k\in\mathbb{C}$ satisfying $\sum_k |\lambda_k|<\infty$. We
equip this space with the norm
$$\|x\|_{\mathsf{h}^{c}_{1,\mathrm{at}_q}}=\inf\sum_k|\lambda_k|,$$ where the infimum is taken over
all decompositions of $x$ described above. Similarly, we define
$\mathsf{h}^{r}_{1,\mathrm{at}_q}(\M)$.
\end{definition}

Now, by Lemma \ref{h1at-to-h1}, we have the obvious inclusion
$\mathsf{h}^c_{1,\mathrm{at}_q}(\M)\subset \mathsf{h}^c_1(\M)$. In
fact, the two spaces coincide thanks to the following theorem.

\begin{theorem}\label{crude h1 isometry}
For all $1<q\le\infty$, we have
$$\mathsf{h}^c_1(\M)=\mathsf{h}^{c}_{1,\mathrm{at}_q}(\M)$$
with equivalent norms. Similarly,
$\mathsf{h}^r_1(\M)=\mathsf{h}^{r}_{1,\mathrm{at}_q}(\M)$ with
equivalent norms.
\end{theorem}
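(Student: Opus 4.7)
Plan. The easy direction $\mathsf h^c_{1,\mathrm{at}_q}(\M) \hookrightarrow \mathsf h^c_1(\M)$ follows from Lemma \ref{h1at-to-h1} applied to each atom, together with the observation that for $a$ in the unit ball of $L_1(\M_1)$ one has $s_c(a)=|a|$ (the martingale being constant from time one on) and hence $\|a\|_{\mathsf h^c_1}\le \|a\|_1\le 1$; summing over any admissible decomposition yields $\|x\|_{\mathsf h^c_1}\le \|x\|_{\mathsf h^c_{1,\mathrm{at}_q}}$.

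For the converse inequality, my plan is to argue by duality, bootstrapping off the two tools already established in the paper: the Perrin identification $(\mathsf h^c_1(\M))^*=\mathsf{bmo}^c(\M)$ and the John--Nirenberg equivalence $\mathsf{bmo}^c(\M)\simeq\mathsf{bmo}^c_{q'}(\M)$ of Theorem \ref{c-j-n-b}. Given $\phi\in(\mathsf h^c_{1,\mathrm{at}_q}(\M))^*$ with $\|\phi\|\le 1$, I first represent $\phi$ as $\phi(\cdot)=\tau(y\,\cdot)$ for some $y\in L_1(\M)$, using the density of a suitable subspace of bounded $L_2$-martingales inside $\mathsf h^c_{1,\mathrm{at}_q}(\M)$. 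The heart of the argument is the bound $\|y\|_{\mathsf{bmo}^c}\le C$. Evaluating $\phi$ on an atom $a=zb$ with $\E_n z=0$, $\|z\|_{\mathsf h^c_q}\le 1$ (resp.\ $\|z\|_{\mathsf{bmo}^c}\le 1$ when $q=\infty$) and $b\in L_{q'}(\M_n)$, $\|b\|_{q'}\le 1$, a direct computation using cyclicity of $\tau$ together with $\E_n(zb)=\E_n(z)b=0$ yields
\[
\tau(ya)=\tau(yzb)=\tau\bigl((y-y_n)zb\bigr).
\]
Taking the supremum over admissible $z$ and invoking the Junge--Xu duality $(\mathsf h^c_q(\M))^*=\mathsf h^c_{q'}(\M)$ (resp.\ the Perrin duality for $q=\infty$), restricted to $\ker\E_n$, translates the bound $\sup_z|\tau((y-y_n)zb)|\le 1$ into control of the $\mathsf h^c_{q'}$ norm of $b(y-y_n)$, and then into $\|y\|_{\mathsf{bmo}^c_{q'}}\lesssim 1$ after passing to the supremum in $b$ and $n$. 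The first half of $\|y\|_{\mathsf{bmo}^c_{q'}}$, namely $\|\E_1 y\|_\infty\le 1$, is recovered by testing $\phi$ on the unit ball of $L_1(\M_1)$. Theorem \ref{c-j-n-b} then upgrades this to $\|y\|_{\mathsf{bmo}^c}\le C$.

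Having produced $y\in\mathsf{bmo}^c(\M)$ with controlled norm representing $\phi$, the Perrin duality identifies $\phi$ as a bounded functional on $\mathsf h^c_1(\M)$ of norm at most $C$; hence $|\phi(x)|\le C\|x\|_{\mathsf h^c_1}$ for every $x\in\mathsf h^c_{1,\mathrm{at}_q}(\M)$. Taking the supremum over norm-one $\phi$ yields $\|x\|_{\mathsf h^c_{1,\mathrm{at}_q}}\le C\|x\|_{\mathsf h^c_1}$, completing the equivalence of norms. The row version $\mathsf h^r_1(\M)=\mathsf h^r_{1,\mathrm{at}_q}(\M)$ is then obtained by passing to adjoints.

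The main obstacle I anticipate is the clean duality identification: transporting $\tau((y-y_n)zb)$, via cyclicity of the trace, to a pairing of $b(y-y_n)\in\ker\E_n$ against $z\in\ker\E_n$, and verifying that the Junge--Xu duality restricted to this annihilator returns precisely the $\mathsf h^c_{q'}$ norm of $b(y-y_n)$. The left/right multiplication asymmetry native to the noncommutative setting has to be tracked carefully, and the endpoint $q=\infty$ requires the Perrin duality in place of Junge--Xu, though the overall scheme is identical.
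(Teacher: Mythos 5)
For $1<q<\infty$ your plan is essentially the paper's own proof: the easy inclusion via Lemma \ref{h1at-to-h1}, the representation of a functional through the dense copy of $L_2(\M)$ (for $q\le 2$) or $L_q(\M)$ (for $q>2$) from Lemma \ref{embedding}, the identification $(\mathsf h^c_{1,\mathrm{at}_q})^*=\mathsf{bmo}^c_{q'}$ by testing on atoms, the upgrade to $\mathsf{bmo}^c$ via Theorem \ref{c-j-n-b}, and finally the duality $(\mathsf h^c_1)^*=\mathsf{bmo}^c$ together with density. One bookkeeping remark: with the functional written as $\tau(y^*\,\cdot)$ and the sesquilinear pairing in the Junge--Xu duality, testing on atoms $a=zb$ gives $\tau(y^*zb)=\tau\bigl(((y-y_n)b^*)^*z\bigr)$, so what you control is $\|(y-y_n)b^*\|_{\mathsf h^c_{q'}}$, i.e.\ right multiplication, which is exactly what the definition of $\mathsf{bmo}^c_{q'}$ requires; your intermediate object $b(y-y_n)$ is on the wrong side, but this is the fixable asymmetry you yourself flag.

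The genuine gap is the endpoint $q=\infty$, where your claim that ``the overall scheme is identical'' with Perrin duality replacing Junge--Xu breaks down at the very first step. For $q=\infty$ the dense subspace supplied by Lemma \ref{embedding} is $L_\infty(\M)$ and the relevant exponent is $q'=1$: a bounded functional on $\mathsf h^c_{1,\mathrm{at}_\infty}(\M)$ restricts to a bounded functional on $L_\infty(\M)$, and since $(L_\infty(\M))^*\neq L_1(\M)$ there is no Riesz representation producing an element $y\in L_1(\M)$ with $\phi=\tau(y^*\,\cdot)$; without such a $y$ the whole ``test on atoms, bound $\|y\|_{\mathsf{bmo}^c_{1}}$, apply John--Nirenberg'' chain cannot even start. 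The paper points out precisely this obstruction (``due to the lack of Riesz representation'') and handles $q=\infty$ by a different mechanism: it first proves the fine atomic decomposition of Theorem \ref{fine h1 isometry} at $q=\infty$ by exhibiting $\mathsf h^c_{1,\mathrm{at}_{\infty,\mathrm{pr}}}(\M)$ as a quotient of the predual space $L^{\mathcal P}_1(\mathsf{bmo}^c)$, using the w*-closedness of the image of $\mathsf{bmo}^c_{1,\mathrm{pr}}(\M)$ (Krein--Smulian) and the bipolar theorem, and then deduces the crude case $q=\infty$ from the fact that fine atoms are crude atoms. You would need either to reproduce an argument of this predual type, or to find another substitute for the missing representation step; as written, your proof covers only $1<q<\infty$.
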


We prove this theorem by duality. We require the following lemmas.

\begin{lemma}\label{embedding}
$(\rm{i})$ For all $1<q\leq2$, $L_2(\M)$ densely and continuously
embeds into $\mathsf h^c_{1,\mathrm{at}_q}(\M)$.

$(\rm{ii})$ For all $2<q\leq\infty$, $L_q(\M)$ densely and
continuously embeds into $\mathsf h^c_{1,\mathrm{at}_q}(\M)$.
\end{lemma}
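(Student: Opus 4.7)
The plan, for both (i) and (ii), is to decompose $x$ in the appropriate $L_p(\M)$ (with $p=2$ in (i) and $p=q$ in (ii)) as a single $(1,q,c)$-atom plus the unconditioned part $\E_1(x) \in L_1(\M_1)$, and then to handle density by approximating individual atoms. The pivotal observation for the continuous embedding is that, since $\tau$ is a normalized trace, the unit $\un$ of $\M$ satisfies $\|\un\|_{q'} = 1$, so $\un$ lies in the unit ball of $L_{q'}(\M_1)$ for every $1 < q \leq \infty$. Setting $y := x - \E_1(x)$, we have $\E_1(y) = 0$, and the factorization $y = y \cdot \un$ (with $n = 1$, $b = \un$) exhibits $y/N(y)$ as a $(1,q,c)$-atom, where $N(y) := \|y\|_{\mathsf h^c_q}$ for $1 < q < \infty$ and $N(y) := \|y\|_{\mathsf{bmo}^c}$ for $q = \infty$. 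Hence
$$\|x\|_{\mathsf h^c_{1,\mathrm{at}_q}} \leq \|\E_1(x)\|_1 + N(y).$$

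It remains to bound $N(y)$ by $\|x\|_p$. In case (i), the contractive embedding $L_2 \hookrightarrow L_q$ for probability traces, combined with the $L_2$-orthogonality $\|y\|_2 = \|s_c(y)\|_2$ (valid because $\E_1(y) = 0$), yields $N(y) = \|s_c(y)\|_q \leq \|s_c(y)\|_2 = \|y\|_2 \leq 2\|x\|_2$. In case (ii), the noncommutative Burkholder inequality of \cite{JuXu03} gives $N(y) \leq c q \|y\|_q \leq 2cq \|x\|_q$ for $2 < q < \infty$, while the $q = \infty$ case is immediate from the definition of $\mathsf{bmo}^c$, namely $N(y) \leq 2\|x\|_\infty$. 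Since also $\|\E_1(x)\|_1 \leq \|x\|_p$, this establishes the continuous inclusion in each of the claimed ranges.

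For density, every $x \in \mathsf h^c_{1,\mathrm{at}_q}(\M)$ is by definition an absolutely convergent sum of $(1,q,c)$-atoms and unit-ball elements of $L_1(\M_1)$, so it suffices to approximate a single atom $a = yb$ in $\mathsf h^c_{1,\mathrm{at}_q}$-norm by an element of $L_p(\M)$. Approximating $b \in L_{q'}(\M_n)$ by a bounded $b' \in \M_n$ costs at most $\|b-b'\|_{q'}$ in atomic norm, since $y(b-b')/\|b-b'\|_{q'}$ is again a $(1,q,c)$-atom; for $1 < q < \infty$, the space $\mathsf h^c_q$ is by construction the closure of finite $L_\infty$-martingales, so one may further replace $y$ by a finite martingale $y^{(N)}$, giving $y^{(N)} b' \in L_\infty(\M) \subset L_p(\M)$. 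The main obstacle is the endpoint $q = \infty$, where a general $y$ in $\mathsf{bmo}^c$ is not approximable in $\mathsf{bmo}^c$-norm by finite $L_\infty$-martingales; here we would bypass the issue by first reducing $b$ to a ``finite'' piece $be$ for a projection $e \in \M_n$ with $\tau(e)$ small, so that $ybe$ is effectively supported on a finite-dimensional block on which the truncation of $y$ is unproblematic.
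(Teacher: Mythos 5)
Your proof of the continuous embedding is essentially the paper's own argument: the same decomposition $x=\E_1(x)+(x-\E_1(x))\cdot\mathds{1}$, viewing the normalized $\E_1(x)$ as an element of the unit ball of $L_1(\M_1)$ and the normalized $x-\E_1(x)$ as a single $(1,q,c)$-atom with $n=1$ and $b=\mathds{1}$, together with the same three estimates: $\|s_c(y)\|_q\le\|s_c(y)\|_2=\|y\|_2$ for $1<q\le2$, the noncommutative Burkholder inequality for $2<q<\infty$, and $\|\cdot\|_{\mathsf{bmo}^c}\lesssim\|\cdot\|_{\infty}$ for $q=\infty$. So that half is correct and identical in substance (the paper normalizes $\E_1(x)$ in $L_2$ or $L_q$ rather than $L_1$, which changes nothing).

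On density the paper merely writes that it is trivial, so you supply more detail than the source. Your argument for $1<q<\infty$ is the right routine one (approximate $b$ in $L_{q'}$ by $b'\in\M_n$ and $y$ in $\mathsf{h}^c_q$ by a finite $L_\infty$-martingale; one should also replace the approximant $y^{(N)}$ by $y^{(N)}-\E_n(y^{(N)})$ so that the error terms are again multiples of atoms, harmless since $s_c(\E_n z)\le s_c(z)$). Your treatment of $q=\infty$, however, is not a proof. Truncating $b$ is fine if done in the right direction (one keeps the piece where $|b|$ is bounded and discards $b\,\mathds{1}_{(\lambda,\infty)}(|b|)$, whose \emph{support}, not the retained one, has small trace), but this does not touch the real obstruction: you are still left with $yb'$ where $y\in\mathsf{bmo}^c$ is unbounded, a general von Neumann algebra has no ``finite-dimensional block,'' the truncations $y^{(N)}$ do not converge to $y$ in $\mathsf{bmo}^c$, and the $\mathrm{at}_\infty$-atomic norm of $(y-y^{(N)})b'$ is not controlled by any norm in which bounded elements are dense (controlling it via $\mathsf{h}^c_1$ or via $2$-atoms would be circular, as that is the content of the theorem this lemma feeds). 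So, for the statement as written, there is a genuine gap at the endpoint $q=\infty$. In fairness, the paper does not prove this point either, and it is never used: the Riesz-representation step in the duality lemma invokes the dense embedding only for $1<q<\infty$, while the $q=\infty$ atomic decomposition is handled by the separate bipolar/predual argument, which needs only the continuous inclusion $L_\infty(\M)\subset\mathsf{h}^c_{1,\mathrm{at}_\infty}(\M)$ together with the density of finite $L_\infty$-martingales in $\mathsf{h}^c_1(\M)$.
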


\begin{proof}
$(\rm{i})$. For any $x\in L_2(\M)$, we decompose it as a linear
combination of two atoms:
$$x=\|x-\mathcal{E}_1(x)\|_2\frac{x-\mathcal{E}_1(x)}{\|x-\mathcal{E}_1(x)\|_2}+\|\mathcal{E}_1
(x)\|_2\frac{\mathcal{E}_1(x)}{\|\mathcal{E}_1(x)\|_2}.$$
 Indeed, on the
one hand, ${\mathcal{E}_1(x)}/{\|\mathcal{E}_1(x)\|_2}\in
L_2(\M_1)\subset L_1(\M_1)$ and
$$\|\frac{\mathcal{E}_1(x)}{\|\mathcal{E}_1(x)\|_2}\|_1
=\frac{\|\mathcal{E}_1(x)\|_1}{\|\mathcal{E}_1(x)\|_2}\leq1.$$ On
the other hand,
$$\frac{x-\mathcal{E}_1(x)}{\|x-\mathcal{E}_1(x)\|_2}
=\frac{x-\mathcal{E}_1(x)}{\|x-\mathcal{E}_1(x)\|_2}\cdot\mathds{1}\doteq
y\cdot b.$$ Clearly, $\mathcal{E}_1(y)=0$, $\|b\|_{q'}\leq1$ and
$$\|y\|_{\mathsf h^c_q}=\|\frac{x-\mathcal{E}_1(x)}{\|x-\mathcal{E}_1(x)\|_2}\|_{\mathsf h^c_q}
\leq\|\frac{x-\mathcal{E}_1(x)}{\|x-\mathcal{E}_1(x)\|_2}\|_{\mathsf
h^c_2}\leq1.$$ Thus $x$ is a sum of two atoms and
$$\|x\|_{\mathsf h^c_{1,\mathrm{at}_q}}\leq\|x-\mathcal{E}_1(x)\|_2+\|\mathcal{E}_1
(x)\|_2\leq\sqrt{2}\|x\|_2.$$ The density is trivial.

$(\rm{ii})$. This case is similar to the previous one. We first deal
with the case $2<q<\infty$. Given $x\in L_q(\M)$, we write again:
$$x=c_q\|x-\mathcal{E}_1(x)\|_q\frac{x-\mathcal{E}_1(x)}{c_q\|x-\mathcal{E}_1(x)\|_q}+\|\mathcal{E}_1
(x)\|_q\frac{\mathcal{E}_1(x)}{\|\mathcal{E}_1(x)\|_q},$$ where
$c_q$ is fixed below. Indeed,
${\mathcal{E}_1(x)}/{\|\mathcal{E}_1(x)\|_q}\in L_q(\M_1)\subset
L_1(\M_1)$ and
$$\|\frac{\mathcal{E}_1(x)}{\|\mathcal{E}_1(x)\|_q}\|_1
=\frac{\|\mathcal{E}_1(x)\|_1}{\|\mathcal{E}_1(x)\|_q}\leq1.$$ On
the other hand,
$$\frac{x-\mathcal{E}_1(x)}{c_q\|x-\mathcal{E}_1(x)\|_q}=\frac{x-\mathcal{E}_1(x)}{c_q\|x-\mathcal{E}_1(x)\|_q}\cdot\mathds{1}\doteq y\cdot b,$$
$$\mathcal{E}_1(\frac{x-\mathcal{E}_1(x)}{c_q\|x-\mathcal{E}_1(x)\|_q})=0,\quad\|b\|_{q'}\leq1$$
and the noncommutative Burkholder inequality in \cite{JuXu03} yields
$$\|y\|_{\mathsf h^c_q}=\|\frac{x-\mathcal{E}_1(x)}{c_q\|x-\mathcal{E}_1(x)\|_q}\|_{\mathsf h^c_q}
\leq
c_q\|\frac{x-\mathcal{E}_1(x)}{c_q\|x-\mathcal{E}_1(x)\|_q}\|_{q}\leq
1.$$ Therefore,
$$\|x\|_{\mathsf h^c_{1,\mathrm{at}_q}}\leq c_q\|x-\mathcal{E}_1(x)\|_q+\|\mathcal{E}_1
(x)\|_q\leq (2c_q+1)\|x\|_q.$$
 The case $q=\infty$ is proved in the
same way just by replacing the noncommutative Burkholder inequality by
the trivial fact that
$\|\cdot\|_{\mathsf{bmo}^c}\leq\|\cdot\|_{\infty}$. The density is
trivial.
\end{proof}

\begin{lemma}\label{duality}
Let $1<q<\infty$. Then
$$(\mathsf{h}^c_{1,\mathrm{at}_{q}}(\M))^*=\mathsf{bmo}^c_{q'}(\M)$$
with equivalent norms. More precisely,
 \begin{enumerate}[\rm(i)]
 \item Every $x\in\mathsf{bmo}^c_{q'}(\M)$ defines a bounded
linear functional on $\mathsf{h}^c_{1,\mathrm{at}_{q}}(\M)$ by
 \begin{align}\label{c-d-b}
\varphi_x(a)=\tau(x^*a), \forall a\in(1,q,c)\textrm{-atoms}.
\end{align}

\item Conversely, each $\varphi\in
(\mathsf{h}^c_{1,\mathrm{at}_{q}}(\M))^*$ is given as (\ref{c-d-b})
by some $x\in\mathsf{bmo}^c_{q'}(\M)$.
 \end{enumerate}
 Similarly,
$(\mathsf{h}^r_{1,\mathrm{at}_{q}}(\M))^*=\mathsf{bmo}^r_{q'}(\M)$
with equivalent norms.
\end{lemma}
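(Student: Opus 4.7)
The plan is to prove the two implications by duality, using the Junge--Xu duality $(\mathsf h^c_q(\M))^*=\mathsf h^c_{q'}(\M)$ from \cite{JuXu03} as the main tool, together with the dense embeddings of $L_2$ or $L_q$ into $\mathsf h^c_{1,\mathrm{at}_q}(\M)$ furnished by Lemma \ref{embedding}.

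For direction (i), I would fix a $(1,q,c)$-atom $a=yb$ relative to level $n$ and massage $\tau(x^*a)$ using cyclicity of the trace:
\[
\tau(x^*a)=\tau(bx^*y)=\tau(b(x-x_n)^*y)=\overline{\tau(y^*(x-x_n)b^*)}.
\]
The middle equality is obtained by observing that $bx_n^*\in\M_n$ and $\E_n(y)=0$, so $\tau(bx_n^*y)=\tau(\E_n(bx_n^*y))=\tau(bx_n^*\E_n(y))=0$. Both $y$ and $(x-x_n)b^*$ are martingales with vanishing conditional expectation at level $n$, so the Junge--Xu duality yields
\[
|\tau(x^*a)|\le C\,\|y\|_{\mathsf h^c_q}\,\|(x-x_n)b^*\|_{\mathsf h^c_{q'}}\le C\,\|x\|_{\mathsf{bmo}^c_{q'}},
\]
where the last step uses $b^*\in\M_n$ and $\|b^*\|_{q'}=\|b\|_{q'}\le 1$. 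For $a$ in the unit ball of $L_1(\M_1)$, a direct calculation gives $|\tau(x^*a)|=|\tau(\E_1(x)^*a)|\le\|\E_1(x)\|_\infty\le\|x\|_{\mathsf{bmo}^c_{q'}}$. Extending to $\ell^1$-combinations of atoms, $\varphi_x$ is well-defined and bounded of norm at most $C\|x\|_{\mathsf{bmo}^c_{q'}}$.

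For direction (ii), I would start with $\varphi\in(\mathsf h^c_{1,\mathrm{at}_q}(\M))^*$. Lemma \ref{embedding} gives a dense continuous embedding of $L_2(\M)$ (when $1<q\le 2$) or $L_q(\M)$ (when $2<q<\infty$) into $\mathsf h^c_{1,\mathrm{at}_q}(\M)$, so restricting $\varphi$ and applying standard $L_p$-duality produces an element $x\in L_1(\M)$ with $\varphi(z)=\tau(x^*z)$ on this dense subspace. Testing against the unit ball of $L_1(\M_1)$ yields $\|\E_1(x)\|_\infty\le\|\varphi\|$. For the second piece of $\|x\|_{\mathsf{bmo}^c_{q'}}$, I fix $n$ and $a\in\M_n$ with $\|a\|_{q'}\le 1$ and apply the Junge--Xu duality in reverse:
\[
\|(x-x_n)a\|_{\mathsf h^c_{q'}}\le C\sup|\tau(y^*(x-x_n)a)|,
\]
where the supremum runs over martingales $y$ with $\|y\|_{\mathsf h^c_q}\le 1$; we may assume $\E_n(y)=0$ since $\tau(y_n^*(x-x_n)a)=0$ by the same $\E_n$-argument as above. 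Mirroring the algebra of step (i) in reverse, $\tau(y^*(x-x_n)a)=\tau(y^*xa)=\overline{\tau(x^*(ya^*))}$, and crucially $ya^*$ is itself a $(1,q,c)$-atom (with the role of $b$ played by $a^*$). Hence $|\tau(y^*(x-x_n)a)|=|\varphi(ya^*)|\le\|\varphi\|$, and taking suprema gives $\|x\|_{\mathsf{bmo}^c_{q'}}\le C\|\varphi\|$.

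The main delicate point is ensuring that $\varphi(\cdot)=\tau(x^*\cdot)$ actually applies to the test atoms $ya^*$ in (ii), since a priori the identity holds only on the dense subspace $L_2$ or $L_q$. I would handle this by approximating the $y$ in $\mathsf h^c_q$-norm by finite $L_\infty$-martingales, which are dense in $\mathsf h^c_q$ by definition; the corresponding atoms then lie in $L_\infty\cdot\M_n\subset L_2\cap L_q$, and passing to the limit preserves the bound while the atomic norm of the approximating atoms stays controlled. The row case $(\mathsf h^r_{1,\mathrm{at}_q}(\M))^*=\mathsf{bmo}^r_{q'}(\M)$ follows by applying the column argument to $x^*$.
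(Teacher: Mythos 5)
Your proposal is correct and takes essentially the same route as the paper: part (i) rests on the same trace manipulation reducing $\tau(x^*a)$ to the $\mathsf{h}^c_q$--$\mathsf{h}^c_{q'}$ pairing of $y$ against $(x-x_n)b^*$ via the Junge--Xu duality, and part (ii) uses the Riesz representation on the dense subspace from Lemma \ref{embedding} followed by the same computation in reverse, exploiting that $(y-y_n)a^*$ is itself a $(1,q,c)$-atom. Your additional care with the $\|\mathcal{E}_1(x)\|_\infty$ term and with approximating the test elements by finite $L_\infty$-martingales merely makes explicit details the paper leaves implicit.
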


\begin{proof}
$\rm(i)$ Let $x\in \mathsf{bmo}^c_{q'}$, and $a=yb$ where $a$ is a
$(1,q,c)$-atom as in Definition \ref{defi-c-atom}. Then
\begin{align*}
|\tau(x^*a)|&=|\tau(\mathcal{E}_{n}(x^*y)b)|\\&=|\tau(\mathcal{E}_{n}((x^*-x^*_n)y)b)|
=|\tau(((x-x_n)b^*)^*y)|.
\end{align*}
Thus, by the duality identity $\mathsf h^c_q(\M)=(\mathsf h^c_{q'}(\M))^*$ (see
\cite{JuXu03} for the relevant constants),
\begin{align*}
|\tau(x^*a)|\leq \left\|(x-x_n)b^*\right\|_{\mathsf
h^c_{q'}}\|y\|_{\mathsf{h}^c_q} &\leq \|x\|_{\mathsf{bmo}^c_{q'}}.
\end{align*}

$\rm(ii)$. Let $\varphi$ be any linear functional on
$\mathsf{h}^c_{1,\mathrm{at}_q}(\M)$. When $1<q\leq2$, by Lemma
\ref{embedding}
we can find $x\in L_2(\M)$ such that
$$\varphi(y)=\tau(x^*y),\qquad\forall y\in L_2(\M),$$
and
$$\|\varphi\|=\sup_{y\in L_2,\|y\|_{\mathsf{h}^c_{1,\mathrm{at}_q}}\leq1}|\tau(x^*y)|.$$
When $2<q<\infty$, by the same Lemma \ref{embedding}, we get the
same representation of $\varphi$ with an  $x\in L_{q'}(\M)$. Then
fix $n$ and take any $b\in \M_n$ with $\|b\|_{q'}\leq1$. Again, by
the duality $\mathsf h^c_q(\M)=(\mathsf h^c_{q'}(\M))^*$, we do the
following calculation:
\begin{align*}
\|(x-x_n)b\|_{\mathsf{h}^c_{q'}}&=\sup_{\|y\|_{(\mathsf{h}^c_{q'})^*}\leq 1}|\tau(b^*(x^*-x^*_n)y)|\\
&\leq\sup_{\|y\|_{\mathsf{h}^c_q}\leq cq}|\tau(b^*(x^*-x^*_n)y)|\\
&=\sup_{\|y\|_{\mathsf{h}^c_q}\leq cq}|\tau((x^*-x^*_n)(y-y_n)b^*)|\\
&=\sup_{\|y\|_{\mathsf{h}^c_q}\leq cq}|\tau(x^*((y-y_n)b^*))|\\
&\leq cq\|\varphi\|
\end{align*}
Here, we have used the fact that $\tau(x-x_n)=\tau(y-y_n)=0$ in the
second and third equality respectively. The second inequality is
due to the fact that $(y-y_n)b^*$ is a $(1,q,c)$-atom.
\end{proof}

Now we are at a position to prove Theorem \ref{crude h1 isometry}.

\begin{proof}
We consider here only the case $1<q<\infty$ and postpone the case $q=\infty$ to the end of the proof of Theorem \ref{fine h1
isometry} below. We only need to show the inclusion
$$\mathsf{h}^c_{1}(\M)\subset \mathsf{h}^c_{1,\mathrm{at}_q}(\M).$$
Take $x\in \mathsf h^c_{1,\mathrm{at}_q}(\M)$, by Theorem
\ref{c-j-n-b} and Lemma \ref{duality}, we can conduct the
following calculation,
\begin{align*}
\|x\|_{\mathsf{h}^c_{1,\mathrm{at}_q}}&=\sup_{\|y\|_{(\mathsf{h}^c_{1,\mathrm{at}_q})^*}\leq1}|\tau(x^*y)|\\
&\leq\sup_{\|y\|_{\mathsf{bmo}^c_{q'}}\leq cq}|\tau(x^*y)|\\
&\leq\sup_{\|y\|_{\mathsf{bmo}^c}\leq cq}|\tau(x^*y)|
\leq cq\|x\|_{\mathsf{h}^c_{1}}.
\end{align*}
Then we end the proof with the density of $\mathsf
h^c_{1,\mathrm{at}_q}(\M)$ in $\mathsf h^c_1(\M)$.
\end{proof}

\begin{definition}
We define
$$\mathsf{h}_{1,\mathrm{at}_{q}}(\M)=\mathsf{h}^c_{1,\mathrm{at}_{q}}(\M)+\mathsf{h}^r_{1,\mathrm{at}_{q}}(\M)+\mathsf{h}^d_1(\M)$$
equipped with the sum norm
$$\|x\|_{\mathsf{h}_{1,\mathrm{at}_{q}}}=\inf_{x=x_c+x_r+x_d}\{\|x_c\|_{\mathsf{h}^c_{1,\mathrm{at}_{q}}}
+\|x_r\|_{\mathsf{h}^r_{1,\mathrm{at}_{q}}}+\|x_d\|_{\mathsf{h}^d_1}\}.$$
\end{definition}

Then by Theorem \ref{crude h1 isometry}, we obtain the atomic
decomposition of $\mathsf{h}_1(\M)$.

\begin{corollary}
We have
$$\mathsf{h}_1(\M)=\mathsf{h}_{1,\mathrm{at}_{q}}(\M)$$
with equivalent norms.
\end{corollary}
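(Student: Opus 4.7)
The plan is to deduce the corollary directly from Theorem \ref{crude h1 isometry} by unpacking the definitions of the sum norms on $\mathsf{h}_1(\M)$ and $\mathsf{h}_{1,\mathrm{at}_{q}}(\M)$. Since the diagonal summand $\mathsf{h}^d_1(\M)$ is the same on both sides, only the column and row pieces need to be compared, and for these Theorem \ref{crude h1 isometry} already gives
$$\mathsf{h}^c_1(\M)=\mathsf{h}^c_{1,\mathrm{at}_q}(\M),\qquad \mathsf{h}^r_1(\M)=\mathsf{h}^r_{1,\mathrm{at}_q}(\M)$$
with equivalent norms.

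More concretely, I will fix $x\in \mathsf{h}_1(\M)$ and an arbitrary decomposition $x=y+z+w$ with $y\in\mathsf{h}^c_1(\M)$, $z\in\mathsf{h}^r_1(\M)$, $w\in\mathsf{h}^d_1(\M)$. By Theorem \ref{crude h1 isometry} there exists a constant $C_q$ (depending only on $q$) such that $\|y\|_{\mathsf{h}^c_{1,\mathrm{at}_q}}\le C_q\|y\|_{\mathsf{h}^c_1}$ and $\|z\|_{\mathsf{h}^r_{1,\mathrm{at}_q}}\le C_q\|z\|_{\mathsf{h}^r_1}$. Using the same triple $(y,z,w)$ as an admissible decomposition for the $\mathsf{h}_{1,\mathrm{at}_q}$ norm gives
$$\|x\|_{\mathsf{h}_{1,\mathrm{at}_q}}\le C_q\bigl(\|y\|_{\mathsf{h}^c_1}+\|z\|_{\mathsf{h}^r_1}+\|w\|_{\mathsf{h}^d_1}\bigr),$$
and taking the infimum over all such decompositions yields $\|x\|_{\mathsf{h}_{1,\mathrm{at}_q}}\le C_q\|x\|_{\mathsf{h}_1}$. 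The reverse inequality is handled in the same way: starting from a decomposition $x=y+z+w$ attaining the $\mathsf{h}_{1,\mathrm{at}_q}$ norm (up to $\varepsilon$), Lemma \ref{h1at-to-h1} gives $\|y\|_{\mathsf{h}^c_1}\le \|y\|_{\mathsf{h}^c_{1,\mathrm{at}_q}}$ and similarly for $z$, so $\|x\|_{\mathsf{h}_1}\le\|x\|_{\mathsf{h}_{1,\mathrm{at}_q}}$.

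There is essentially no obstacle here: the corollary is the $\mathrm{h}_1$-level packaging of Theorem \ref{crude h1 isometry} together with the elementary fact that equivalences of each summand transfer to an equivalence of the sum norm. The only thing one should verify carefully is that the diagonal component $w$ can be chosen the same in both decompositions, which is immediate because $\mathsf{h}^d_1(\M)$ appears identically in both definitions.
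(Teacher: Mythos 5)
Your argument is correct and is essentially the paper's route: the corollary is stated there as an immediate consequence of Theorem \ref{crude h1 isometry}, since the column and row summands are identified with their atomic counterparts with equivalent norms and the diagonal summand is literally the same, so the equivalence passes to the sum norms exactly as you spell out. Your use of Lemma \ref{h1at-to-h1} for the easy inclusion matches the paper's implicit reasoning as well.
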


Combined with Davis' decomposition presented in \cite{Per09}, the
above theorem yields
$\mathcal{H}_1(\M)=\mathsf{h}_{1,\mathrm{at}_{q}}(\M)$ with
equivalent norms. In other words, we obtain an atomic decomposition
for $\mathcal{H}_1(\M)$ too.

\subsection{A fine version of atoms}

\begin{definition}\label{defi-f-atom}
For $1<q\leq\infty$, $a\in L_1(\M)$ is said to be a
$(1,q,c)_{\mathrm{pr}}$-atom with respect to $(\M_n)_{n\geq1}$, if
there exist $n\geq1$ and a projection $e\in\mathcal{P}(\M_n)$ such
that
 \begin{enumerate}[(i)]
 \item $\E_n(a)=0$;

\item$r(a)\leq e$;

\item $\|a\|_{\mathsf h^c_q}\leq(\tau(e))^{-\frac{1}{q'}}$
for $1<q<\infty$; $\|a\|_{\mathsf{bmo}^c}\leq{(\tau(e))}^{-1}$ for
$q=\infty$.
  \end{enumerate}
Similarly, we define $(1,q,r)_{\mathrm{pr}}$-atoms with
$r(a)$ replaced by $l(a)$.
\end{definition}

\begin{remark}\label{f-is-c}
A $(1,q,c)_{\mathrm{pr}}$-atom $a$ is necessarily a $(1,q,c)$-atom.
Indeed, we can factorize $a$ as $a=yb$ with
$y=a(\tau(e))^{{1}/{q'}}$ and $b=e(\tau(e))^{-1/{q'}}$.
\end{remark}

\begin{definition}
We define $\mathsf{h}^{c}_{1,\mathrm{at}_{q,\mathrm{pr}}}(\M)$ to be
the Banach space of all $x\in L_1(\M)$ which admit a decomposition
$x=\sum_k\lambda_ka_k$, where for each $k$, $a_k$ is a
$(1,q,c)_{\mathrm{pr}}$-atom or an element in the unit ball of
$L_1(\M_1)$, and $\lambda_k\in\mathbb{C}$ satisfying $\sum_k
|\lambda_k|<\infty$. We equip this space with the norm
$$\|x\|_{\mathsf{h}^{c}_{1,\mathrm{at}_{q,\mathrm{pr}}}}=\inf\sum_k|\lambda_k|,$$ where the infimum is taken over
all decompositions of $x$ described above. Similarly, we define
$\mathsf{h}^{r}_{1,\mathrm{at}_{q,\mathrm{pr}}}(\M)$.
\end{definition}

Now, by Remark \ref{f-is-c} and Lemma \ref{crude h1 isometry}, we
have the obvious inclusion
$\mathsf{h}^c_{1,\mathrm{at}_{q,\mathrm{pr}}}(\M)\subset
\mathsf{h}^c_{1}(\M)$. In fact, the two spaces coincide thanks to
the following theorem.

\begin{theorem}\label{fine h1 isometry}
For all $1<q\leq\infty$, we have
$$\mathsf{h}^c_1(\M)=\mathsf{h}^{c}_{1,\mathrm{at}_{q,\mathrm{pr}}}(\M)$$
with equivalent norms. Similarly,
$\mathsf{h}^r_1(\M)=\mathsf{h}^{r}_{1,\mathrm{at}_{q,\mathrm{pr}}}(\M)$
with equivalent norms.
\end{theorem}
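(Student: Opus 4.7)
The plan is to prove the theorem by duality, following the strategy of Theorem \ref{crude h1 isometry} but substituting the fine John--Nirenberg inequality (Theorem \ref{f-j-n-b}) at the crucial step. The inclusion $\mathsf h^c_{1,\mathrm{at}_{q,\mathrm{pr}}}(\M)\subset\mathsf h^c_1(\M)$ is already at hand from Remark \ref{f-is-c} and Lemma \ref{h1at-to-h1}, so the task is to show the norms are equivalent on a dense subspace common to both. As in Lemma \ref{embedding}, $L_q(\M)$ embeds continuously and densely into $\mathsf h^c_{1,\mathrm{at}_{q,\mathrm{pr}}}(\M)$ via the splitting $x=\mathcal{E}_1(x)+(x-\mathcal{E}_1(x))$ and the atom associated to $e=\mathds{1}$ (the noncommutative Burkholder inequality controlling the $\mathsf h^c_q$ norm). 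To close the circle I would establish a duality statement $(\mathsf h^c_{1,\mathrm{at}_{q,\mathrm{pr}}}(\M))^*\simeq\mathsf{bmo}^c_{q',\mathrm{pr}}(\M)\simeq\mathsf{bmo}^c(\M)$ parallel to Lemma \ref{duality}, and conclude via Pisier--Xu/Perrin duality $\mathsf h^c_1(\M)^*\simeq\mathsf{bmo}^c(\M)$.

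For the upper half of this duality, a $(1,q,c)_{\mathrm{pr}}$-atom $a$ with projection $e\in\mathcal{P}(\M_n)$ satisfies, using $\mathcal{E}_n(a)=0$ and $ae=a$,
\begin{equation*}
|\tau(x^*a)|=|\tau(((x-x_n)e)^*a)|\le\|(x-x_n)e\|_{\mathsf h^c_{q'}}\|a\|_{\mathsf h^c_q}\le\|(x-x_n)e/\tau(e)^{1/q'}\|_{\mathsf h^c_{q'}}\le\|x\|_{\mathsf{bmo}^c_{q',\mathrm{pr}}},
\end{equation*}
which Theorem \ref{f-j-n-b} bounds by $C\|x\|_{\mathsf{bmo}^c}$; the middle inequality is the duality $\mathsf h^c_q(\M)^*=\mathsf h^c_{q'}(\M)$ from \cite{JuXu03}.

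The lower half is the heart of the argument. Given $\varphi\in(\mathsf h^c_{1,\mathrm{at}_{q,\mathrm{pr}}}(\M))^*$, density represents $\varphi(\cdot)=\tau(x^*\cdot)$ for $x$ in the appropriate $L_p$-space. To bound $\|x\|_{\mathsf{bmo}^c_{q',\mathrm{pr}}}$, fix $n$ and $e\in\mathcal{P}(\M_n)$, and to any $w\in L_q(\M)$ with $\|w\|_{\mathsf h^c_q}\le\tau(e)^{-1/q'}$ associate
\begin{equation*}
a:=we-\mathcal{E}_n(w)e.
\end{equation*}
Using $e\in\M_n$, one checks $\mathcal{E}_n(a)=0$ and $ae=a$, while a direct computation of the martingale differences gives $da_k=(dw_k)e$ for $k>n$ and $da_k=0$ otherwise, whence $s_c(a)^2\le e\,s_c(w)^2\,e$ and $\|a\|_{\mathsf h^c_q}\le\|w\|_{\mathsf h^c_q}\le\tau(e)^{-1/q'}$. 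Thus $a$ is a $(1,q,c)_{\mathrm{pr}}$-atom, so $|\tau(w^*(x-x_n)e)|=|\varphi(a)|\le\|\varphi\|$. Taking the supremum over admissible $w$ via $\mathsf h^c_q(\M)^*=\mathsf h^c_{q'}(\M)$ yields $\|(x-x_n)e/\tau(e)^{1/q'}\|_{\mathsf h^c_{q'}}\le C\|\varphi\|$; the companion term $\|\mathcal{E}_1(x)\|_\infty$ is controlled by testing $\varphi$ against the unit ball of $L_1(\M_1)$, which sits inside $\mathsf h^c_{1,\mathrm{at}_{q,\mathrm{pr}}}(\M)$ by definition. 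Theorem \ref{f-j-n-b} then promotes the estimate to $\|x\|_{\mathsf{bmo}^c}\le C\|\varphi\|$.

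The main obstacle is verifying that the constructed $a=we-\mathcal{E}_n(w)e$ really is an atom with the precise normalization $\|a\|_{\mathsf h^c_q}\le\tau(e)^{-1/q'}$; once the difference-sequence identity $da_k=(dw_k)e$ is in hand, the duality chain closes. For the case $q=\infty$ (which also completes the postponed $q=\infty$ case of Theorem \ref{crude h1 isometry}), the same construction works after replacing the atom condition $\|a\|_{\mathsf h^c_q}\le\tau(e)^{-1/q'}$ by $\|a\|_{\mathsf{bmo}^c}\le\tau(e)^{-1}$; the analogous bound $\|we-\mathcal{E}_n(w)e\|_{\mathsf{bmo}^c}\le\|w\|_{\mathsf{bmo}^c}$ follows from the identity $\mathcal{E}_k|a-a_k|^2=e\,\mathcal{E}_k|w-w_k|^2\,e$ for $k\ge n$ together with the contractivity of conditional expectations for $k<n$, and one invokes Lemma \ref{h1at-to-h1} in place of H\"older's inequality on the upper side.
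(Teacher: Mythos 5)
For $1<q<\infty$ your argument is sound and is essentially the paper's own route: the paper proves this range by establishing $(\mathsf{h}^c_{1,\mathrm{at}_{q,\mathrm{pr}}}(\M))^*=\mathsf{bmo}^c_{q',\mathrm{pr}}(\M)$ ``in the same way as'' Lemma \ref{duality} and leaves the details to the reader, and your construction $a=(w-\mathcal{E}_n(w))e$, the verification $da_k=(dw_k)e$ and $s_c(a)^2\le e\,s_c(w)^2\,e$, together with the appeal to Theorem \ref{f-j-n-b} and the embedding of $L_2$ (resp.\ $L_{q}$) via atoms with $e=\mathds{1}$, correctly supply those details.

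The genuine gap is the case $q=\infty$. Your step ``density represents $\varphi(\cdot)=\tau(x^*\cdot)$ for $x$ in the appropriate $L_p$-space'' has no analogue there: the splitting trick of Lemma \ref{embedding} only embeds $L_\infty(\M)$ densely and continuously into $\mathsf{h}^c_{1,\mathrm{at}_{\infty,\mathrm{pr}}}(\M)$ (it rests on $\|\cdot\|_{\mathsf{bmo}^c}\le\|\cdot\|_\infty$; no $L_p$ with $p<\infty$ will do), so a bounded functional $\varphi$ on the atomic space restricts only to a functional on $(L_\infty(\M),\|\cdot\|_\infty)$, and $(L_\infty(\M))^*$ is strictly larger than $L_1(\M)$. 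Hence there is in general no element $x$ with $\varphi=\tau(x^*\cdot)$, and your atoms built from $w$ with $\|w\|_{\mathsf{bmo}^c}\le\tau(e)^{-1}$ have nothing to pair against. This is exactly what the paper calls ``the lack of Riesz representation,'' and it is why the paper proves $q=\infty$ by a completely different argument: it embeds $\mathsf{bmo}^c_{1,\mathrm{pr}}(\M)$ via $\pi(y)=((y-y_{n_e})e)_e$ into the dual of $X=L^{\mathcal{P}}_1(\mathsf{bmo}^c)$, shows $Y=\pi(\mathsf{bmo}^c_{1,\mathrm{pr}}(\M))$ is w*-closed by the Krein--Smulian theorem (this is where Theorem \ref{f-j-n-b} is used, to know that $\mathsf{bmo}^c_{1,\mathrm{pr}}(\M)$ is a dual space with w*-compact ball), and then invokes the bipolar theorem to get $Y\simeq(X/Y_{\perp})^*$, computing the quotient norm of $X/Y_{\perp}$ to be exactly $\|\cdot\|_{\mathsf{h}^c_{1,\mathrm{at}_{\infty,\mathrm{pr}}}}$. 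Note also that the hard direction at $q=\infty$ is $\mathsf{h}^c_1(\M)\subset\mathsf{h}^c_{1,\mathrm{at}_{\infty,\mathrm{pr}}}(\M)$, i.e.\ producing decompositions into the \emph{smaller} class of $\infty$-atoms, so it cannot be recovered from the finite-$q$ cases; without an argument of the above kind your $q=\infty$ case --- and with it the postponed $q=\infty$ case of Theorem \ref{crude h1 isometry}, which the paper deduces from it --- remains unproved.
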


Again, we prove this theorem for $1<q<\infty$ by showing
$(\mathsf{h}^c_{1,\mathrm{at}_{q,\mathrm{pr}}}(\M))^*=\mathsf{bmo}^c_{q',\mathrm{pr}}(\M)$.
The latter duality equality is proved in the same way as Theorem
\ref{duality}. We leave the details to the reader. However by the
argument in Theorem 4.6, we can not prove the theorem in the case
$q=\infty$, due to the lack of Riesz representation. Here we provide
another way to do it, which seems new, even in the commutative case.

Let $\mathcal{P}$ be the set of projections of $\M$. Given
$e\in\mathcal{P}$ let
 $$n_e=\min\{k\;:\;e\in\mathcal{P}(\mathcal{M}_k)\}.$$
Note that $n_e=\infty$ if the set on the right hand side is empty. This case is of no interest in the discussion below. For a family $(g_e)_{e\in\mathcal{P}}\subset \mathsf{bmo}^c(\M)$ define
 $$\|(g_e)_e\|_{L^{\mathcal{P}}_1(\mathsf{bmo}^c)}=\sum_{e\in\mathcal{P}}\tau(e)\|g_e\|_{\mathsf{bmo}^c}.$$
We will consider the Banach space:
 $$
L^{\mathcal{P}}_1(\mathsf{bmo}^c)=\{(g_e)_e\;:\; g_ee=g_e,\;
\mathcal{E}_{n_e}g_e=0,\;
\|(g_e)_e\|_{L^{\mathcal{P}}_1(\mathsf{bmo}^c)}<\infty\}.$$ We will
also need the following space consisting of families in
$\mathsf{h}^c_1(\M)$:
 $$L^{\mathcal{P}}_\infty(\mathsf{h}^c_1)=\{(f_e)_e\;: \;f_ee=f_e,\;
 \mathcal{E}_{n_e}f_e=0,\; \|(f_e)_e\|_{L^{\mathcal{P}}_{\infty}(\mathsf{h}^c_1)}<\infty\},$$
where
 $$\|(f_e)_e\|_{L^{\mathcal{P}}_{\infty}(\mathsf{h}^c_1)}=\sup_{e\in\mathcal{P}}\frac{1}{\tau(e)}\,\|f_e\|_{\mathsf{h}_1^c}.$$
For convenience, we denote $L^{\mathcal{P}}_1(\mathsf{bmo}^c)$ by
$X$ and $L^{\mathcal{P}}_{\infty}(\mathsf{h}^c_1)$ by $Z$. We embed
$\mathsf{bmo}^c_{1,\mathrm{pr}}(\M)$ isomorphically into $Z$ via the
following  map
 $$\pi(y)=((y-y_{n_e})e)_e.$$
Set $Y=\pi(\mathsf{bmo}^c_{1,\mathrm{pr}}(\M))$.

\begin{lemma}
With the notation above we have
 \begin{enumerate}[\rm(i)]
 \item $Z$ is a subspace of $X^*$ with equivalent norms, so is $Y$.
 \item $Y$ is w*-closed in $X^*$.
 \end{enumerate}
\end{lemma}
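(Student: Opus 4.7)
For part (i), the plan is to introduce the natural bilinear pairing
\beqn
\la (f_e)_e,(g_e)_e\ra \;=\;\sum_{e\in\mathcal{P}}\tau(f_e^*g_e),
\eeqn
between $Z$ and $X$, and identify $Z$ with a subspace of $X^*$ via this pairing. Given $(f_e)_e\in Z$ and $(g_e)_e\in X$, the conditions $f_ee=f_e$, $g_ee=g_e$, $\mathcal{E}_{n_e}f_e=\mathcal{E}_{n_e}g_e=0$ ensure that componentwise the Fefferman duality $\mathsf h^c_1(\M)^*=\mathsf{bmo}^c(\M)$ (used above from \cite{Per09}) applies, so $|\tau(f_e^*g_e)|\le c\|f_e\|_{\mathsf h^c_1}\|g_e\|_{\mathsf{bmo}^c}$. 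Applying $\ell_\infty$--$\ell_1$ H\"older to the sum then gives $|\la (f_e),(g_e)\ra|\le c\|(f_e)\|_Z\|(g_e)\|_X$, so $(f_e)\in X^*$ with $\|(f_e)\|_{X^*}\lesssim\|(f_e)\|_Z$. For the converse, fix $e_0\in\mathcal{P}$, pick $h\in\mathsf{bmo}^c(\M)$ with $he_0=h$, $\mathcal{E}_{n_{e_0}}h=0$, $\|h\|_{\mathsf{bmo}^c}\le1$ and $\tau(f_{e_0}^*h)\ge\|f_{e_0}\|_{\mathsf h^c_1}-\e$, and test against the singleton family $g_{e_0}=h/\tau(e_0)$, $g_e=0$ otherwise; this has $\|(g_e)\|_X\le1$, so $\|f_{e_0}\|_{\mathsf h^c_1}/\tau(e_0)\le\|(f_e)\|_{X^*}+\e/\tau(e_0)$, whence $\|(f_e)\|_Z\lesssim\|(f_e)\|_{X^*}$. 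This proves $Z\subset X^*$ with equivalent norms. For $Y$, observe that $\pi(y)_e=(y-y_{n_e})e$ satisfies $\pi(y)_ee=\pi(y)_e$ and $\mathcal{E}_{n_e}\pi(y)_e=0$; by the definition of $\|y\|_{\mathsf{bmo}^c_{1,\mathrm{pr}}}$, $\|\pi(y)_e\|_{\mathsf h^c_1}/\tau(e)\le\|y\|_{\mathsf{bmo}^c_{1,\mathrm{pr}}}$, so $Y\subset Z$. For the reverse inequality one notes that $\sup_e\|\pi(y)_e\|_{\mathsf h^c_1}/\tau(e)$ realizes the second half of the $\mathsf{bmo}^c_{1,\mathrm{pr}}$ norm, the $\mathcal{E}_1y$ contribution being invariant under $\pi$ and absorbed by considering $\pi$ modulo $\M_1$.

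For part (ii), the plan is to invoke the Krein--\v Smulian theorem: a convex subspace of a dual Banach space is w*-closed iff its intersection with every closed ball is w*-closed. Fix $r>0$ and take a net $(\pi(y_\alpha))_\alpha\subset Y$ with $\|\pi(y_\alpha)\|_{X^*}\le r$, converging to some $F\in X^*$ in $\sigma(X^*,X)$. By (i), $\|\pi(y_\alpha)\|_Z\lesssim r$, and since $\pi(y+m)=\pi(y)$ for any $m\in\M_1$, we may normalize so that $\mathcal{E}_1y_\alpha=0$; then $\|y_\alpha\|_{\mathsf{bmo}^c}\lesssim r$. Because $\mathsf{bmo}^c(\M)\subset(\mathsf h^c_1(\M))^*$ by \cite{Per09}, the Banach--Alaoglu theorem supplies a subnet with $y_\alpha\to y$ in $\sigma(\mathsf{bmo}^c,\mathsf h^c_1)$.

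It then remains to identify $F=\pi(y)$. First test $F$ against a finitely supported family $(g_e)$ with support in $\{e_1,\dots,e_N\}$. Using $g_{e_i}e_i=g_{e_i}$ and $\mathcal{E}_{n_{e_i}}g_{e_i}=0$, the pairing simplifies to
\beqn
\la\pi(y_\alpha),(g_e)\ra=\sum_{i=1}^N\tau\bigl((y_\alpha^*-\mathcal{E}_{n_{e_i}}(y_\alpha^*))g_{e_i}\bigr).
\eeqn
Each conditional expectation $\mathcal{E}_{n_{e_i}}$ is normal, hence w*-continuous from $\mathsf{bmo}^c(\M)$ to $\mathsf{bmo}^c(\M)$; combined with $y_\alpha\to y$ in the w*-topology and the fact that $g_{e_i}\in\mathsf h^c_1(\M)$, each summand converges, giving $\la\pi(y_\alpha),(g_e)\ra\to\la\pi(y),(g_e)\ra$. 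Since this matches the given limit $F((g_e))$, the functionals $F$ and $\pi(y)$ agree on every finitely supported family in $X$. The main technical obstacle is to extend this identification to all of $X$: one uses that finite-support families are norm-dense in $X$ (because $X$ is an $\ell_1$-type direct sum over $\mathcal{P}$ and tails can be truncated at arbitrarily small cost) together with the uniform $X^*$-bound on $\pi(y_\alpha)$ and $F$ to pass from dense convergence to equality throughout $X$, so that $F=\pi(y)\in Y$.
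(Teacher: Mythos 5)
Your argument is correct and follows essentially the same route as the paper: the componentwise $\mathsf h^c_1$--$\mathsf{bmo}^c$ duality plus $\ell_1$--$\ell_\infty$ H\"older for the upper bound, singleton test families for the lower bound, and Krein--\v Smulian together with w*-compactness of the (bmo) ball, via the John--Nirenberg equivalence of Theorem \ref{f-j-n-b}, for (ii). The only cosmetic difference is in identifying the limit functional: the paper pairs $\pi(y^\alpha)$ directly with $\sum_e g_e$, which is absolutely summable in $\mathsf h^c_1$ by Lemma \ref{h1at-to-h1}, whereas you test on finitely supported families and then use their density in $X$ together with the uniform bound; both work, and note that your appeal to w*-continuity of $\mathcal{E}_{n_{e_i}}$ is not even needed since $\tau(\mathcal{E}_{n_{e_i}}(y_\alpha^*)g_{e_i})=\tau(y_\alpha^*\,\mathcal{E}_{n_{e_i}}g_{e_i})=0$.
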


\begin{proof}
(i). Let $(f_e)_e\in Z$, for any $(g_e)_e\in X$, we have
\begin{align*}
|\langle (f_e)_e,(g_e)_e\rangle|&=|\sum_e\tau((f_e)^*g_e)|\\
&\leq\sqrt{2}\sum_e\|f_e\|_{\mathsf{h}^c_1}\|g_e\|_{\mathsf{bmo}^c}\\
&\leq\sqrt{2}\sup_e\frac{1}{\tau(e)}\,\|f_e\|_{\mathsf{h}_1^c}\cdot\sum_e\tau(e)\,\|g_e\|_{\mathsf{bmo}^c}\\
&=\sqrt{2}\|(f_e)_e\|_{Z}\|(g_e)_e\|_{X}.
\end{align*}
Thus we get $\|(f_e)_e\|_{X^*}\leq\sqrt{2}\|(f_e)_e\|_{Z}.$

We turn to the proof of the inverse inequality. For any $(f_e)_e\in
Z$, fix $e_0\in\mathcal{P}$, we have
\begin{align*}
\frac{1}{\tau({e_0})}\,\|f_{e_0}\|_{\mathsf
h^c_1}&=\sup_{\|g\|_{\mathsf{bmo}^c}\leq1}\frac{1}{\tau({e_0})}\,\big|\tau((f_{e_0})^*g)\big|\\
&=\sup_{\|g\|_{\mathsf{bmo}^c}\leq1}\frac{1}{\tau({e_0})}\,\big|\tau((f_{e_0})^*(g-g_{n_{e_0}}){e_0})\big|\\
&\leq\sup_{\|(g-g_{n_{e_0}}){e_0}\|_{\mathsf{bmo}^c}\leq1}\frac{1}{\tau({e_0})}\,\big|\tau((f_{e_0})^*(g-g_{n_{e_0}}){e_0})\big|.
\end{align*}
Then we define $(g_e)_e$ as
$g_e={(g-g_{n_{e_0}}{e_0})}/{\tau({e_0})}$ if $e={e_0}$, otherwise
$g_e=0$. Thus
\begin{align*}
\frac{1}{\tau({e_0})}\,\|f_{e_0}\|_{\mathsf h^c_1}\leq\|(f_e)_e\|_{X^*}\|(g_e)_e\|_X\leq\|(f_e)_e\|_{X^*},
\end{align*}
which implies $\|(f_e)_e\|_{Z}\leq\|(f_e)_e\|_{X^*}$.

(ii). Since $Y$ is a subspace of $X^*$, by Krein and Smulian's
theorem, we only need to prove that for all $t>0$, $Y\cap B_t(X^*)$
is w*-closed in $X^*$, where $B_t(X^*)$ is the closed ball of $X^*$ centered at the origin and with radius $t$.
 Take a net $(y^{\alpha})_{\alpha}\subset
\mathsf{bmo}^c_{1,\mathrm{pr}}(\M)$ such that
$\pi((y^{\alpha})_{\alpha})\subset Y\cap B_t(X^*)$. Hence
$(y^{\alpha})_{\alpha}$ are bounded in
$\mathsf{bmo}^c_{1,\mathrm{pr}}(\M)$. Suppose that,
\begin{align}\label{w*cvinX*}
\langle \pi(y^{\alpha}),(g_e)_e\rangle\rightarrow
\langle\xi,(g_e)_e\rangle,\quad\quad \forall (g_e)_e\in X,
\end{align}
for some $\xi\in B_t(X^*)$.
 We will show that $\xi\in Y$, which will complete
 the proof. We need two facts. The first one is that
 $\mathsf{bmo}^c_{1,\mathrm{pr}}(\M)$ is a dual space by Theorem \ref{f-j-n-b}, so its unit ball is w*-compact.
 Therefore, the bounded net $(y^{\alpha})_{\alpha}$
 in $\mathsf{bmo}^c_{1,\mathrm{pr}}(\M)$ admits a $w^*$-cluster point $y$. Without loss of generality, we assume that $(y^{\alpha})_{\alpha}$ converges to $y$ in the $w^*$-topology:
\begin{align}\label{w*cvinbmoc1pr}
\langle y^{\alpha},x\rangle\rightarrow \langle
y,x\rangle,\quad\quad \forall x\in \mathsf{h}^c_1(\M).
\end{align}
The second fact is that for any $(g_e)_e\in X$, the sum $\sum_eg_e$
is absolutely summable in $\mathsf{h}^c_1(\M)$. Indeed, by Lemma \ref{h1at-to-h1}
\begin{align*}
 \sum_{e}\|g_e\|_{\mathsf{h}^c_1}\leq\sum_{e}\tau(e)\|g_e\|_{\mathsf{bmo}^c}=\|(g_e)_e\|_{X}.
\end{align*}
Therefore, for any $(g_e)_e\in X$, we have
\begin{align*}
\langle
\pi(y^{\alpha}),(g_e)_e\rangle&=\sum_e\tau(((y^{\alpha}_e-y^{\alpha}_{n_e})e)^*g_e)\\
&=\tau((y^{\alpha})^*\sum_eg_e)
\end{align*}
Combining \ref{w*cvinX*} and \ref{w*cvinbmoc1pr},
we deduce that $\xi=\pi(y)\in Y$, as desired.
\end{proof}

We can now prove Theorem \ref{fine h1
isometry} in the case of $q=\infty$.

\begin{proof}
 Let  $Y_{\perp}$ be the preannihilator of
$Y$ in $X^*$:
 $$Y_{\perp}=\{(g_e)_e\in X\;:\; \langle\pi(y),(g_e)_e\rangle=0,\forall
y\in\mathsf{bmo}^c_{1,\mathrm{pr}}(\M)\}.$$
Then by the bipolar theorem
 $$Y\simeq (X/Y_{\perp})^*.$$
Using the second fact in the proof of the previous lemma, we get
\begin{align*}
Y_{\perp}&=\{(g_e)_e\in X\;:\; \tau(y^*\sum_eg_e)=0,\forall
y\in\mathsf{bmo}^c_{1,\mathrm{pr}}(\M)\}\\
&=\{(g_e)_e\in X\;:\; \sum_eg_e=0\; \mathrm{in}
\;\mathsf{h}^c_1(\M)\}.
\end{align*}
Then for $(g_e)_e\in X/{Y_{\perp}}$, let
 $$g=\sum_{e\in\mathcal P}g_e.$$
Then
\begin{align*}
\|(g_e)_e\|_{X/Y_{\perp}}
&=\inf\{\sum_e\tau(e)\,\|(g'_e)_e\|_{\mathsf{bmo}^c}\,:\, g=\sum_eg'_e,\; (g'_e)_e\in X\}\\
&=\inf\{\sum_e|\lambda_e|\,:\,g=\sum_e\lambda_ea_e,\,(\lambda_ea_e)_e\in X,\, \|a_e\|_{\mathsf{bmo}^c}\le\frac{1}{\tau(e)}\}\\
&=\|g\|_{\mathsf{h}^c_{1,\mathrm{at}_{\infty,\mathrm{pr}}}}.
\end{align*}
Consequently, for any
$x\in\mathsf{h}^c_{1,\mathrm{at}_{\infty,\mathrm{pr}}}(\M)$ and any
decomposition $x=\sum_e\lambda_ea_e$,
\begin{align*}
\|x\|_{\mathsf{h}^c_{1,\mathrm{at}_{\infty,\mathrm{pr}}}}&=\|(\lambda_ea_e)_e\|_{X/Y_{\perp}}\\
&=\|(\lambda_ea_e)_e\|_{Y^*}\\
&=\sup_{y\in\mathsf{bmo}^c_{1,\mathrm{pr}},\|\pi(y)\|_{Y}\leq1}|\langle(\lambda_ea_e),\pi(y)\rangle|\\
&\leq\sup_{\|y\|_{\mathsf{bmo}^c}\leq
c}|\tau((\sum_e\lambda_ea_e)^*y)|\leq c\|x\|_{\mathsf{h}^c_1}.
\end{align*}
Therefore, combined with Lemma \ref{h1at-to-h1} and Remark
\ref{f-is-c}, the density of
$\mathsf{h}^c_{1,\mathrm{at}_{\infty,\mathrm{pr}}}(\M)$ in
$\mathsf{h}^c_1(\M)$ (due to Lemma \ref{embedding}) yields the desired duality identity
$\mathsf{h}^c_{1,\mathrm{at}_{\infty,\mathrm{pr}}}(\M)=\mathsf{h}^c_1(\M)$.
\end{proof}

Let us return back to the unsettled case $q=\infty$ in the proof of Theorem \ref{crude h1 isometry}.  Since a fine atom  is necessarily a crude
atom, we get
$\mathsf{h}^c_1(\M)\subset\mathsf{h}^c_{1,\mathrm{at}_{\infty}}(\M)$,
hence $\mathsf{h}^c_1(\M)=\mathsf{h}^c_{1,\mathrm{at}_{\infty}}(\M)$
with equivalent norms due to Lemma \ref{h1at-to-h1}. Thus Theorem \ref{crude h1 isometry}
is completely proved.


\begin{definition}
We define
$$\mathsf{h}_{1,\mathrm{at}_{q,\mathrm{pr}}}(\M)=\mathsf{h}^c_{1,\mathrm{at}_{q,\mathrm{pr}}}(\M)+\mathsf{h}^r_{1,\mathrm{at}_{q,\mathrm{pr}}}(\M)+\mathsf{h}^d_1(\M)$$
equipped with the sum norm
$$\|x\|_{\mathsf{h}_{1,\mathrm{at}_{q,\mathrm{pr}}}}=\inf_{x=x_c+x_r+x_d}\{\|x_c\|_{\mathsf{h}^c_{1,\mathrm{at}_{q,\mathrm{pr}}}}+\|x_r\|_{\mathsf{h}^r_{1,\mathrm{at}_{q,\mathrm{pr}}}}+\|x_d\|_{\mathsf{h}^d_1}\}.$$
\end{definition}

Then by Theorem \ref{fine h1 isometry} and Perrin's noncommutative
Davis decomposition (see \cite{Per09}), we get the atomic
decomposition of $\mathsf{h}_1(\M)$ and $\mathcal{H}_1(\M)$.

\begin{corollary}
We have
$$\mathcal{H}_1(\M)=\mathsf{h}_1(\M)=\mathsf{h}_{1,\mathrm{at}_{q,\mathrm{pr}}}(\M),$$ for any $1<q\leq\infty$,
with equivalent norms.
\end{corollary}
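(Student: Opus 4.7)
The plan is to deduce the corollary directly from two ingredients already in hand: Perrin's noncommutative Davis decomposition (which gives the first equality) and Theorem \ref{fine h1 isometry} (which gives the second equality, applied columnwise and rowwise).

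For the identity $\mathcal{H}_1(\M)=\mathsf{h}_1(\M)$, I would simply invoke the Davis-type decomposition from \cite{Per09}, exactly as the corollary indicates. So the substantive content reduces to proving $\mathsf{h}_1(\M)=\mathsf{h}_{1,\mathrm{at}_{q,\mathrm{pr}}}(\M)$ with equivalent norms.

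For this, the key observation is that both spaces are defined as sums of three pieces that match up termwise. By definition,
$$\mathsf{h}_1(\M)=\mathsf{h}^c_1(\M)+\mathsf{h}^r_1(\M)+\mathsf{h}^d_1(\M),\qquad \mathsf{h}_{1,\mathrm{at}_{q,\mathrm{pr}}}(\M)=\mathsf{h}^c_{1,\mathrm{at}_{q,\mathrm{pr}}}(\M)+\mathsf{h}^r_{1,\mathrm{at}_{q,\mathrm{pr}}}(\M)+\mathsf{h}^d_1(\M),$$
and the diagonal summand $\mathsf{h}^d_1(\M)$ is identical on both sides. Thus it suffices to apply Theorem \ref{fine h1 isometry} to the column and row summands: given $x\in\mathsf{h}_1(\M)$ with decomposition $x=x_c+x_r+x_d$, use Theorem \ref{fine h1 isometry} to further decompose $x_c$ into $(1,q,c)_{\mathrm{pr}}$-atoms (plus the $L_1(\M_1)$ piece) and $x_r$ into $(1,q,r)_{\mathrm{pr}}$-atoms, with control $\|x_c\|_{\mathsf{h}^c_{1,\mathrm{at}_{q,\mathrm{pr}}}}\simeq\|x_c\|_{\mathsf{h}^c_1}$ and similarly for $x_r$. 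Taking the infimum over all such decompositions of $x$ yields $\|x\|_{\mathsf{h}_{1,\mathrm{at}_{q,\mathrm{pr}}}}\lesssim\|x\|_{\mathsf{h}_1}$. Conversely, any element of $\mathsf{h}_{1,\mathrm{at}_{q,\mathrm{pr}}}(\M)$ is a sum of an $\mathsf{h}^c_{1,\mathrm{at}_{q,\mathrm{pr}}}$-piece, an $\mathsf{h}^r_{1,\mathrm{at}_{q,\mathrm{pr}}}$-piece and an $\mathsf{h}^d_1$-piece; by Theorem \ref{fine h1 isometry} again (in the easy direction, which follows from Lemma \ref{h1at-to-h1} and Remark \ref{f-is-c}), these lie respectively in $\mathsf{h}^c_1(\M)$, $\mathsf{h}^r_1(\M)$, $\mathsf{h}^d_1(\M)$ with comparable norms.

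There is no real obstacle here: the entire content is bundled into Theorem \ref{fine h1 isometry} (whose proof occupies the bulk of this section and relies on the fine John--Nirenberg inequality together with the w*-closedness argument for the case $q=\infty$) and into the cited Davis decomposition. The only thing to be mildly careful about is tracking that the infimum over sum-decompositions on one side matches the infimum on the other, which is a routine triangle-inequality calculation in the sum space.
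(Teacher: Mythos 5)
Your proposal is correct and follows essentially the same route as the paper: the identity $\mathcal{H}_1(\M)=\mathsf{h}_1(\M)$ comes from Perrin's Davis decomposition in \cite{Per09}, and the equality $\mathsf{h}_1(\M)=\mathsf{h}_{1,\mathrm{at}_{q,\mathrm{pr}}}(\M)$ follows by applying Theorem \ref{fine h1 isometry} termwise to the column and row summands, the diagonal part being common to both sum spaces. The paper leaves exactly this routine summand-by-summand bookkeeping implicit, so nothing further is needed.
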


However, using Corollary \ref{f-jm-j-n-b}, we can obtain another
kind of atomic decomposition for $\mathsf{h}_1(\M)$ or
$\mathcal{H}_1(\M)$, which is exactly the noncommutative analogue of
the classical case.
\begin{definition}\label{defi-atom}
For $1<q\leq\infty$, $a\in L_1(\M)$ is said to be a $(1,q)$-atom
with respect to $(\M_n)_{n\geq1}$, if there exist $n\geq1$ and a
projection $e\in\mathcal{P}(\M_n)$ such that
\begin{enumerate}[\rm(i)]
\item $\E_n(a)=0$;
\item $r(a)\leq e$ or $l(a)\leq e$;
\item $\|a\|_{q}\leq(\tau(e))^{-\frac{1}{q'}}$.
\end{enumerate}
\end{definition}

\begin{definition}
We define $\mathsf{h}^{\mathrm{at}}_{1,q}(\M)$ as the Banach space
of all $x\in L_1(\M)$ which admit a decomposition
$x=y+\sum_k\lambda_ka_k$, where for each $k$, $a_k$ is a $(1,q)$-atom or
an element in the unit ball of $L_1(\M_1)$, $\lambda_k\in\mathbb{C}$
satisfying $\sum_k |\lambda_k|<\infty$, and where the martingale differences of $y$ satisfy
$\sum_{j\geq1}\|dy_j\|_1<\infty$. We equip this space with the norm
$$\|x\|_{\mathsf{h}^{\mathrm{at}}_{1,q}}=\inf\big\{\sum_j\|dy_j\|_1+\sum_k|\lambda_k|\big\},$$
where the infimum is taken over all decompositions of $x$ as
above.
\end{definition}

\begin{lemma}\label{hat-to-h1}
If $a$ is a $(1,q)$-atom, then
$$\|a\|_{\mathsf{h}_1}\leq \frac{cq}{q-1}.$$
\end{lemma}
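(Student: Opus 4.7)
The plan is to establish the estimate by duality. Since $(\mathsf{h}_1(\M))^* = \mathsf{bmo}(\M)$ with equivalent norms (cf.\ \cite{Per09}), it is enough to show that for every $y\in\mathsf{bmo}(\M)$,
$$|\tau(y^*a)|\lesssim\frac{q}{q-1}\,\|y\|_{\mathsf{bmo}}.$$

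First I would treat the case $r(a)\leq e$. The vanishing condition $\E_n(a)=0$ gives $\tau(y^*a)=\tau((y-y_n)^*a)$; the support relation $ae=a$ combined with cyclicity of the trace rewrites this as $\tau(e(y-y_n)^*a)$. H\"older's inequality then yields
$$|\tau(y^*a)|\leq\|e(y-y_n)^*\|_{q'}\|a\|_q=\|(y-y_n)e\|_{q'}\|a\|_q.$$
Applying the fine John-Nirenberg inequality for $\mathsf{bmo}$ (Corollary \ref{f-jm-j-n-b}) with $p=q'$ gives
$$\|(y-y_n)e\|_{q'}\leq\beta_{q'}\,\tau(e)^{1/q'}\|y\|_{\mathsf{bmo}},$$
and the atom bound $\|a\|_q\leq\tau(e)^{-1/q'}$ cancels the $\tau(e)$ factors, producing $|\tau(y^*a)|\leq\beta_{q'}\|y\|_{\mathsf{bmo}}$. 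The case $l(a)\leq e$ is handled symmetrically: one uses $ea=a$ to move the projection to the opposite side and invokes the other branch of $\mathcal{P}\mathsf{b}_{q'}$, namely $\|e(y-y_n)\|_{q'}\leq\beta_{q'}\,\tau(e)^{1/q'}\|y\|_{\mathsf{bmo}}$, for the same conclusion.

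It remains to control $\beta_{q'}$ uniformly. From Theorem \ref{c-j-n-b} one has $\beta_{q'}=1$ for $q'<2$ (equivalently $q>2$) and $\beta_{q'}\leq cq'=cq/(q-1)$ for $q'\geq2$ (equivalently $1<q\leq2$). In either regime $\beta_{q'}\lesssim q/(q-1)$, which combined with the norm-equivalence constant from $(\mathsf{h}_1)^*=\mathsf{bmo}$ yields the desired $\|a\|_{\mathsf{h}_1}\leq cq/(q-1)$.

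I do not foresee a serious obstacle; the fine JN inequality was essentially designed to dualize against precisely this kind of projection-supported object. The conceptual point is that, although a $(1,q)$-atom controls only the plain $L_q$-norm (in contrast to $(1,q,c)_{\mathrm{pr}}$-atoms, whose stronger $\mathsf{h}_q^c$-condition gives the cleaner estimate of Lemma \ref{h1at-to-h1}), H\"older against the projection $e$ is enough, because on the $y$-side the fine JN inequality furnishes exactly the matching conditional $L_{q'}$-control. The only mildly delicate bookkeeping is splitting the estimate on $\beta_{q'}$ into the two regimes $q\leq 2$ and $q>2$; these merge into the single uniform bound $q/(q-1)$.
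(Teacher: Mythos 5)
Your proposal is correct and follows essentially the same route as the paper: duality with $\mathsf{bmo}(\M)$, insertion of the projection via $\E_n(a)=0$ and $ae=a$, H\"older, and the fine John--Nirenberg inequality (Corollary \ref{f-jm-j-n-b}) at exponent $q'$ to get the constant $cq'=cq/(q-1)$. The paper simply writes out only the $r(a)\leq e$ case and does not spell out the two regimes for $\beta_{q'}$, but these are exactly the details you supply.
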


\begin{proof}

Without loss of generality, suppose $a$ is a $(1,q)$-atom with
$r(a)\leq e$. We apply Corollary 3.18 and the duality
$(\mathsf{h}_1(\M))^*=\mathsf{bmo}(\M)$.
\begin{align*}
\|a\|_{\mathsf{h}_1}&\leq c\sup_{\|x\|_{\mathsf{bmo}}\leq1}\tau (x^*a)\\
&=c\sup_{\|x\|_{\mathsf{bmo}}\leq1}\tau ((x-x_n)^*a)\\
&=c\sup_{\|x\|_{\mathsf{bmo}}\leq1}\tau (((x-x_n)e)^*a)\\
&\leq c\|a\|_q\|(x-x_n)e\|_{q'}\leq cq'.
\end{align*}
\end{proof}

\begin{theorem}\label{h1 isometry}
For all $1<q\leq\infty$, we have
$$\mathcal{H}_1(\M)=\mathsf{h}_1(\M)=\mathsf{h}^{\mathrm{at}}_{1,q}(\M)$$
with equivalent norms.
\end{theorem}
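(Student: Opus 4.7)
The plan is to reduce Theorem \ref{h1 isometry} to the duality identity $(\mathsf{h}^{\mathrm{at}}_{1,q}(\M))^{*} = \mathsf{bmo}(\M)$, exploiting the projection-based description of $\mathsf{bmo}(\M)$ provided by Corollary \ref{f-jm-j-n-b}. Once this is established, the equality $\mathsf{h}^{\mathrm{at}}_{1,q}(\M) = \mathsf{h}_1(\M)$ will follow from a standard bipolar/open-mapping argument, and Perrin's noncommutative Davis decomposition will then supply $\mathcal{H}_1(\M) = \mathsf{h}_1(\M)$.

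For the continuous embedding $\mathsf{h}^{\mathrm{at}}_{1,q}(\M) \hookrightarrow \mathsf{h}_1(\M)$, I would invoke Lemma \ref{hat-to-h1} on each $(1,q)$-atom together with the trivial inclusion $\mathsf{h}^d_1(\M) \subset \mathsf{h}_1(\M)$ for the non-atomic part $y$, noting that $\|y\|_{\mathsf{h}^d_1} = \sum_j \|dy_j\|_1$. Density is obtained by adapting Lemma \ref{embedding}: for $x \in L_q(\M)$, decompose $x = \mathcal{E}_1(x) + \|x-\mathcal{E}_1(x)\|_q \cdot (x-\mathcal{E}_1(x))/\|x-\mathcal{E}_1(x)\|_q$, where the first term lies (up to scaling) in the unit ball of $L_1(\M_1)$ and the second is a $(1,q)$-atom with $n=1$ and $e = 1_{\M}$; such $L_q$ elements are dense in $\mathsf{h}_1(\M)$.

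The heart of the argument is the dual identification. Given $x \in \mathsf{bmo}(\M)$ and a $(1,q)$-atom $a$ with $r(a) \leq e \in \mathcal{P}(\M_n)$, the relations $\E_n(a)=0$ and $ae=a$ combined with H\"older's inequality yield
$$|\tau(x^{*} a)| = |\tau(((x-x_n)e)^{*} a)| \leq \|(x-x_n)e\|_{q'}\,\|a\|_q \leq \|(x-x_n)e\|_{q'}\,\tau(e)^{-1/q'} \leq \mathcal{P}\mathsf{b}_{q'}(x),$$
with the symmetric bound in the case $l(a) \leq e$; the non-atomic part contributes $|\tau(x^{*} y)| \leq \|x\|_{\mathsf{bmo}^d}\sum_j\|dy_j\|_1$. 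Corollary \ref{f-jm-j-n-b} gives $\mathcal{P}\mathsf{b}_{q'}(x) \leq \beta_{q'}\|x\|_{\mathsf{bmo}}$, so $\varphi_x$ is bounded on $\mathsf{h}^{\mathrm{at}}_{1,q}(\M)$ with norm controlled by $\|x\|_{\mathsf{bmo}}$. Conversely, any $\varphi \in (\mathsf{h}^{\mathrm{at}}_{1,q}(\M))^{*}$ restricts to $L_q(\M)$ and hence admits a Riesz representative $x \in L_{q'}(\M)$ (for $1<q<\infty$); taking suprema over atoms with fixed $(n,e)$ recovers the projection terms in $\mathcal{P}\mathsf{b}_{q'}(x)$, and testing against single-block martingales $y$ with $dy_n \in L_1(\M_n)$, $\E_{n-1}(dy_n)=0$, recovers $\|x\|_{\mathsf{bmo}^d}$; applying Corollary \ref{f-jm-j-n-b} once more yields $\|x\|_{\mathsf{bmo}} \leq \alpha_{q'}\mathcal{P}\mathsf{b}_{q'}(x) \leq c\|\varphi\|$.

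The principal obstacle is the endpoint $q=\infty$, where the Riesz representation used above is unavailable. This case should be handled by mimicking the Krein--Smulian argument from the proof of Theorem \ref{fine h1 isometry}, embedding $\mathsf{h}^{\mathrm{at}}_{1,\infty}(\M)$ into a family-indexed $L^1$-space and exploiting the w$^{*}$-compactness of bounded sets in $\mathsf{bmo}(\M)$. With the duality in hand, the open mapping theorem applied to the adjoint of the continuous, dense inclusion $\mathsf{h}^{\mathrm{at}}_{1,q}(\M) \hookrightarrow \mathsf{h}_1(\M)$ gives the desired equivalence for every $1 < q \leq \infty$.
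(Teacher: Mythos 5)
Your proposal follows essentially the same route as the paper: the easy inclusion via Lemma \ref{hat-to-h1}, the identification of the dual of $\mathsf{h}^{\mathrm{at}}_{1,q}(\M)$ with $\mathsf{bmo}(\M)$ through the projection form of the John--Nirenberg inequality (Corollary \ref{f-jm-j-n-b}) together with Riesz representation for $1<q<\infty$ (as in Theorem \ref{crude h1 isometry} and Lemma \ref{duality}), the Krein--Smulian predual argument for $q=\infty$ (as in Theorem \ref{fine h1 isometry}, with the spaces $L^{\mathcal{P}}_1(L_\infty)$ and $L^{\mathcal{P}}_\infty(L_1)$), and Perrin's Davis decomposition for $\mathcal{H}_1(\M)=\mathsf{h}_1(\M)$. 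This matches the paper's (sketched) proof, so the proposal is correct and not a genuinely different argument.
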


By Lemma \ref{hat-to-h1}, Corollary \ref{f-jm-j-n-b} and using
arguments similar to those in the proof of Theorem \ref{crude h1
isometry}, we can prove the theorem for the case $1<q<\infty$. For
the case $q=\infty$, we use the argument in Theorem \ref{fine h1
isometry}. Instead of $L^{\mathcal{P}}_1(\mathsf{bmo}^c)$ and
$L^{\mathcal{P}}_\infty(\mathsf{h}^c_1)$, we consider the following
two spaces:
\begin{align*}
 L^{\mathcal{P}}_1(L_{\infty})&=\big\{(g_e)_e\;: \; g_ee=g_e\,
\mathrm{or}\, eg_e=g_e,
\mathcal{E}_{n_e}g_e=0,\, \|(g_e)_e\|_{L^{\mathcal{P}}_1(L_{\infty})}<\infty\big\},\\
L^{\mathcal{P}}_\infty(L_1)&=\big\{(f_e)_e\;: \; f_ee=f_e\,
\mathrm{or}\, ef_e=f_e,\, \mathcal{E}_{n_e}f_e=0,\,  \|(f_e)_e\|_{L^{\mathcal{P}}_{\infty}(L_1)}<\infty\big\},
\end{align*}
where
\begin{align*}
\|(g_e)_e\|_{L^{\mathcal{P}}_1(L_{\infty})}&=\sum_e\tau(e)\,\|g_e\|_{{\infty}},\\
 \|(f_e)_e\|_{L^{\mathcal{P}}_{\infty}(L_1)}&=\max\big\{\sup_e\frac{1}{\tau(e)}\,\|f_ee\|_{1},\;
\sup_e\frac{1}{\tau(e)}\,\|ef_e\|_{1}\big\}.
 \end{align*}
 Then by Lemma
\ref{hat-to-h1} and Corollary \ref{f-jm-j-n-b}, we get the announced
results. We leave the details to the reader.

\begin{remark} The part of this paper on the crude versions of the John-Nirenberg inequalities and atomic decomposition can be easily extended to the type III case  with minor modifications.
\end{remark}

\section{An open question of Junge and Musat}

It is an open question asked in \cite{JuMu07} (on page 136) that given
$2<p<\infty$, whether there exists a constant $c_p$ such that
\begin{equation}
\sup_k\|\mathcal{E}_k|x-\mathcal{E}_{k-1}x|^p\|_\infty ^{\frac
1p}\leq c_p\|x\|_{\BMO}? \label{jm}
\end{equation}

It is easy to see that the answer is negative for matrix-valued
functions with irregular filtration. In the following, we show that
the answer is negative even for matrix-valued dyadic martingales.
Recall that Remark \ref{counter example2} already shows that the answer is negative if one considers the column norm $\|\cdot\|_{\BMO^c}$ alone on the right hand side.

Let $\M$ and $\M_k$ be as in Remark \ref{counter example2}.   We
consider this special case and show that the best constant
$c_p(n)\;$such that (\ref{jm}) holds is bigger than $c(\log
(n+1))^{1/p}$ for all $p\geq 3.$  Let $b$ be an $M_n$-valued
function on ${\Bbb {T}}$. We need the so-called ``sweep'' function
of $b$
\[
S(b)=\sum_{k=1}^\infty |db_k|^2.
\]
Note that it is just the square of the usual square function.
Matrix-valued sweep functions have been studied in \cite{BlPo08},
\cite{GPTV}, \cite{Mei06} etc. It is proved in \cite{Mei06} that the
best constant $c_n$ such that
\begin{equation}
\|S(b)\|_{\BMO^c}\leq c_n\|b\|_\infty ^2  \label{swm}
\end{equation}
is $c(\log (n+1))^2$. A similar result had been proved previously by
Blasco and Pott (see \cite{BlPo08}) by considering
$\|b\|_{\BMO^c}^2$ on the right side of (\ref{swm}).

\begin{lemma}\label{negativeanswer1}
Assume $\|f\|_{\BMO^c}\leq
c(n)\sup_k\|\mathcal{E}_k|f-\mathcal{E}_{k-1}f\||_\infty $ for any
selfadjoint $f$. Then $c(n)\geq c(\log (n+1))^2.$
\end{lemma}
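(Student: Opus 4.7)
My plan is to test the assumed inequality on the sweep function $f = S(b) = \sum_k |db_k|^2$ of a carefully chosen $M_n$-valued dyadic martingale $b$ on $\mathbb{T}$, exploiting the sharpness of Mei's sweep inequality \cite{Mei06}. Since $f=S(b)$ is selfadjoint (in fact positive), the hypothesis applies. By \cite{Mei06}, the best constant $c_n$ in $\|S(b)\|_{\BMO^c}\le c_n\|b\|_\infty^2$ is of order $(\log(n+1))^2$; in particular, there exists an $M_n$-valued dyadic martingale $b$ with $\|b\|_\infty\le 1$ and
$$\|S(b)\|_{\BMO^c}\ \ge\ c\,(\log(n+1))^2.$$
Substituting $f=S(b)$ into the hypothesized inequality gives
$$c\,(\log(n+1))^2\ \le\ \|S(b)\|_{\BMO^c}\ \le\ c(n)\,\sup_k\|\mathcal{E}_k|S(b)-\mathcal{E}_{k-1}S(b)|\|_{\infty},$$
so the claim $c(n)\ge c'(\log(n+1))^2$ reduces to proving that the $L^1$-type oscillation of $S(b)$ is bounded by a universal multiple of $\|b\|_\infty^2$.

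For this key estimate, set $T_k=\sum_{j\ge k}|db_j|^2\ge 0$, so that $S(b)-\mathcal{E}_{k-1}S(b)=T_k-\mathcal{E}_{k-1}T_k$. The orthogonality of the martingale differences of $b$ gives the identity
$$\mathcal{E}_k T_k\ =\ |db_k|^2+\mathcal{E}_k|b-b_k|^2\ =\ \mathcal{E}_k|b-b_{k-1}|^2,$$
whence $\|\mathcal{E}_k T_k\|_\infty\le 4\|b\|_\infty^2$ and $\|\mathcal{E}_{k-1}T_k\|_\infty\le 4\|b\|_\infty^2$ operatorwise. Decomposing $T_k=T_k^{(1)}+T_k^{(2)}$ with $T_k^{(1)}=\mathcal{E}_kT_k\in\M_k$ and $T_k^{(2)}=T_k-\mathcal{E}_kT_k$ (which is a mean-zero increment under $\mathcal{E}_k$), writing $X:=T_k-\mathcal{E}_{k-1}T_k=(T_k^{(1)}-\mathcal{E}_{k-1}T_k^{(1)})+T_k^{(2)}$, and using Kadison--Schwarz together with the orthogonality $\mathcal{E}_k(T_k^{(2)}\cdot A)=0=\mathcal{E}_k(A\cdot T_k^{(2)})$ for $A\in\M_k$, the cross terms vanish and
$$\mathcal{E}_k|X|\ \le\ \bigl(\mathcal{E}_k X^{2}\bigr)^{1/2}\ =\ \bigl((T_k^{(1)}-\mathcal{E}_{k-1}T_k^{(1)})^{2}+\mathcal{E}_k(T_k^{(2)})^{2}\bigr)^{1/2}.$$
The first term is manifestly bounded by $(8\|b\|_\infty^2)^2$; the second is the conditional variance $\mathcal{E}_k T_k^{2}-(\mathcal{E}_k T_k)^{2}$, which, after further unwinding using $T_k^{2}\le\|T_k^{(1)}\|_\infty\,T_k$ inside $\mathcal{E}_k$ together with $\|T_k^{(1)}\|_\infty\le 8\|b\|_\infty^2$, is also majorized by a constant multiple of $\|b\|_\infty^4$. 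Combining yields the desired uniform bound $\sup_k\|\mathcal{E}_k|S(b)-\mathcal{E}_{k-1}S(b)|\|_\infty\le C\|b\|_\infty^2$.

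The main obstacle is precisely this last estimate: although $\|S(b)\|_\infty$ (and hence $\|T_k-\mathcal{E}_{k-1}T_k\|_\infty$) can be as large as the sweep norm itself, one must exploit that the conditional expectation $\mathcal{E}_k$ kills the martingale-increment part $T_k^{(2)}$ to first order and that the ``diagonal'' part $T_k^{(1)}=\mathcal{E}_k|b-b_{k-1}|^2$ is pointwise bounded by $\|b\|_\infty^2$. Once the estimate is in place, comparing the two bounds on $\|S(b)\|_{\BMO^c}$ forces $c(n)\ge c(\log(n+1))^2$, completing the proof.
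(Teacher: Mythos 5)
Your overall strategy is the paper's: test the assumed inequality on $f=S(b)$, invoke the sharpness of the sweep estimate from \cite{Mei06}, and reduce everything to the key bound $\sup_k\|\mathcal{E}_k|S(b)-\mathcal{E}_{k-1}S(b)|\|_\infty\le C\|b\|_\infty^2$. That key bound is indeed true, but your proof of it breaks down, and in a way that cannot be repaired along your route. First, the inequality $T_k^2\le\|T_k^{(1)}\|_\infty T_k=\|\mathcal{E}_kT_k\|_\infty T_k$ is false for operators: for a positive $T$ one only has $T^2\le\|T\|_\infty T$, and $\|T_k\|_\infty$ (which is comparable to $\|S(b)\|_\infty$) is \emph{not} controlled by $\|b\|_\infty^2$ dimension-freely — for the extremal $b$ it is of order $(\log(n+1))^2\|b\|_\infty^2$. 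Already a commutative example ($T=N\mathds{1}_E$ with $\mathbb{P}(E)=1/N$ on an atom of $\mathcal{F}_k$) kills the claimed pointwise inequality.

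Second, and more fundamentally, the Kadison--Schwarz reduction to second conditional moments is self-defeating. Writing $X=S(b)-\mathcal{E}_{k-1}S(b)$ (selfadjoint), one has by definition
\begin{equation*}
\sup_k\big\|\mathcal{E}_kX^2\big\|_\infty=\sup_k\big\|\mathcal{E}_k|S(b)-\mathcal{E}_{k-1}S(b)|^2\big\|_\infty=\|S(b)\|_{\BMO^c}^2 ,
\end{equation*}
so your intermediate claim $\mathcal{E}_kX^2=A^2+\mathcal{E}_k(T_k^{(2)})^2\le C\|b\|_\infty^4$ is literally the statement $\|S(b)\|_{\BMO^c}\le\sqrt{C}\,\|b\|_\infty^2$ with a dimension-free constant — the negation of the sharpness result you are invoking, and of the very lower bound $\|S(b)\|_{\BMO^c}\gtrsim(\log(n+1))^2\|b\|_\infty^2$ your argument needs. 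In other words, passing from $\mathcal{E}_k|X|$ to $(\mathcal{E}_kX^2)^{1/2}$ throws away exactly the gain between first and second conditional moments, which is the whole point here (and of Section 5 of the paper). The paper stays at the level of first moments: for $x=\sum_{k\ge m}|db_k|^2\ge0$ and $y=\mathcal{E}_{m-1}x\ge0$ it uses convexity of $t\mapsto t^2$ plus the L\"owner--Heinz inequality to get $|x-y|\le\sqrt{2}\,(x+\|y\|_\infty\mathds{1})$, and then only needs $\|\mathcal{E}_m x\|_\infty=\|\mathcal{E}_m|b-b_{m-1}|^2\|_\infty\le\|b\|_{\BMO^c}^2$ and $\|y\|_\infty\le\|b\|_{\BMO^c}^2$, yielding $\|S(b)\|_{\BMO^c}\le4c(n)\|b\|_{\BMO^c}^2\le Cc(n)\|b\|_\infty^2$. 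You need a first-moment argument of this kind (circumventing the failure of the operator triangle inequality $|x-y|\le x+y$), not a second-moment one.
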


\begin{proof}
Under the assumption, we have
\begin{align*}
\|S(b)\|_{\BMO^c} &\leq c(n)\sup_m\|\mathcal{E}_m|S(b)-\mathcal{E}_{m-1}S(b)|dt\|_\infty \\
&=c(n)\sup_m\Big\|\mathcal{E}_m\big| \sum_{k=1}^\infty
|db_k|^2-\mathcal{E}_{m-1}\sum_{k=1}^\infty
|db_k|^2\big| \Big\|_\infty \\
&=c(n)\sup_m\Big\|\mathcal{E}_m\big| \sum_{k=m}^\infty
|db_k|^2-\mathcal{E}_{m-1}\sum_{k=m}^\infty |db_k|^2\big|
\Big\|_\infty.
\end{align*}
Let $x=\sum_{k=m}^\infty |db_k|^2$ and
$y=\mathcal{E}_{m-1}\sum_{k=m}^\infty |db_k|^2$. By the convexity of
$|\cdot|^2$, we get
$$\big|\frac{x-y}{2}\big|^2\leq\frac{|x|^2+|y|^2}{2}\leq\frac{|x|^2+\|y\|_{\infty}^2\mathds{1}}{2}
\leq\frac{(|x|+\|y\|_{\infty}\mathds{1})^2}{2}.$$ Then by
L\"owner-Heinz's inequality,
$$\big|\frac{x-y}{2}\big|\leq\frac{|x|+\|y\|_{\infty}\mathds{1}}{\sqrt 2}.$$
Thus by the triangle inequality, we have
\begin{align*}
\|S(b)\|_{\BMO^c}&\leq
2c(n)\sup_m\left\|\mathcal{E}_mx+\|y\|_{\infty}\mathds{1}\right\|_\infty \\
&=2c(n)\sup_m\|\mathcal{E}_m|b-\mathcal{E}_{m-1}b|^2\|_\infty
+2c(n)\|\mathcal{E}_{m-1}|b-\mathcal{E}_{m-1}b|^2\|_\infty
\\
&\leq 2c(n)\|b\|_{\BMO^c}^2+2c(n)\|\mathcal{E}_m|b-\mathcal{E}_{m-1}b|^2\|_\infty \\
&\leq 4c(n)\|b\|_{\BMO^c}^2.
\end{align*}
We then get $c(n)\geq c(\log (n+1))^2$ by (\ref{swm}).
\end{proof}

\begin{lemma}\label{holderforexpectation}
Let $0<p<\infty$ and $\mathcal{E}_m$ be the conditional expectation
from ${\M}$ onto ${\M}_m $, we have
\[
\|\mathcal{E}_m|x|^{\frac{p+1}2}\|_\infty \leq
\|\mathcal{E}_m|x|^p\|_\infty ^{\frac 12}\|\mathcal{E}_m|x\||_\infty
^{\frac 12}.
\]
\end{lemma}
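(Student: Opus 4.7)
The plan is to derive this as a straightforward consequence of the operator Cauchy--Schwarz inequality for the conditional expectation $\mathcal{E}_m$. Since $|x|$ is a positive operator, functional calculus lets us factor
$$|x|^{(p+1)/2}=|x|^{p/2}\cdot |x|^{1/2},$$
with both factors self-adjoint, positive, and mutually commuting.

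The key tool is the following well-known inequality: for any $a,b\in L_\infty(\M)$,
$$\bigl\|\mathcal{E}_m(a^*b)\bigr\|_\infty\leq \bigl\|\mathcal{E}_m(a^*a)\bigr\|_\infty^{1/2}\bigl\|\mathcal{E}_m(b^*b)\bigr\|_\infty^{1/2}.$$
This follows from the $2$-positivity of $\mathcal{E}_m$: applying $\mathcal{E}_m$ entrywise to $\binom{a}{b}\bigl(a^*\ b^*\bigr)\geq 0$ yields
$$\begin{pmatrix} \mathcal{E}_m(aa^*) & \mathcal{E}_m(ab^*) \\ \mathcal{E}_m(ba^*) & \mathcal{E}_m(bb^*) \end{pmatrix}\geq 0,$$
from which the usual ``$2\times 2$ positivity implies Cauchy--Schwarz'' argument (taking $\sup_{\|v\|,\|w\|\leq 1}|\langle \mathcal{E}_m(ab^*)v,w\rangle|^2$ and bounding via the diagonal entries) gives the $L_\infty$ estimate.

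I would then set $a=|x|^{p/2}$ and $b=|x|^{1/2}$; these are positive and commute, so $a^*a=|x|^p$, $b^*b=|x|$, and $a^*b=|x|^{p/2}|x|^{1/2}=|x|^{(p+1)/2}$. Plugging into the displayed inequality yields exactly
$$\bigl\|\mathcal{E}_m|x|^{(p+1)/2}\bigr\|_\infty\leq \bigl\|\mathcal{E}_m|x|^p\bigr\|_\infty^{1/2}\bigl\|\mathcal{E}_m|x|\bigr\|_\infty^{1/2}.$$
There is essentially no obstacle here: once one accepts the operator Cauchy--Schwarz for $\mathcal{E}_m$ (a one-line consequence of $2$-positivity, which for conditional expectations is automatic since they are completely positive), the whole statement is a matter of choosing the right factorization of $|x|^{(p+1)/2}$.
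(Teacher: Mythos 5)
Your proof is correct, but it takes a different route from the paper. The paper argues by duality: for a positive element of $\M_m$ the $L_\infty$-norm is $\sup_{a\in L_1^+(\M_m),\,\|a\|_1\le1}\tau(\,\cdot\,a)$, and after writing $\tau(\mathcal{E}_m|x|^{\frac{p+1}2}a)=\tau\bigl(a^{1/2}|x|^{p/2}\,|x|^{1/2}a^{1/2}\bigr)$ it applies the Cauchy--Schwarz (H\"older) inequality for the trace, recovering $\|\mathcal{E}_m|x|^p\|_\infty^{1/2}\|\mathcal{E}_m|x|\|_\infty^{1/2}$ from the two resulting suprema. You instead apply Cauchy--Schwarz at the operator level, using the $2$-positivity of $\mathcal{E}_m$ to get positivity of the $2\times2$ matrix of conditional expectations and hence $\|\mathcal{E}_m(a^*b)\|_\infty\le\|\mathcal{E}_m(a^*a)\|_\infty^{1/2}\|\mathcal{E}_m(b^*b)\|_\infty^{1/2}$, with the same factorization $|x|^{\frac{p+1}2}=|x|^{p/2}\cdot|x|^{1/2}$ as the paper. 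Both proofs hinge on this factorization; the difference is only where Cauchy--Schwarz is invoked: your version is a clean one-step operator inequality requiring the (standard) fact that conditional expectations are completely positive, while the paper's version needs nothing beyond trace H\"older and the $L_1$--$L_\infty$ duality for $\M_m$, which also makes it painless to interpret for unbounded $x$ (in the application $x$ is bounded, so this is immaterial). One cosmetic slip: the matrix you display, built from $\binom{a}{b}(a^*\ b^*)$, controls $\mathcal{E}_m(ab^*)$ by $\mathcal{E}_m(aa^*)$ and $\mathcal{E}_m(bb^*)$, whereas the inequality you state is for $\mathcal{E}_m(a^*b)$; since your $a=|x|^{p/2}$ and $b=|x|^{1/2}$ are positive and commute, this relabeling is harmless, but you should state the two consistently.
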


\begin{proof} By H\"{o}lder's inequality, we get
\begin{eqnarray*}
\|\mathcal{E}_m|x|^{\frac{p+1}2}\|_\infty
&=&\sup_{\|a\|_{L_1^{+}({\M}_m)\leq
1}}\tau (\mathcal{E}_m|x|^{\frac{p+1}2}a) \\
&=&\sup_{\|a\|_{L_1^{+}({\M}_m)\leq 1}}\tau (a^{\frac 12}|x|^{\frac
p2}|x|^{\frac 12}a^{\frac 12}) \\
&\leq &\sup_{\|a\|_{L_1^{+}({\M}_m)\leq 1}}(\tau (a|x|^p))^{\frac
12}(\tau
(a|x|))^{\frac 12} \\
&=&\|\mathcal{E}_m|x|^p\|_\infty ^{\frac
12}\|\mathcal{E}_m|x\||_\infty ^{\frac 12}.
\end{eqnarray*}
\end{proof}

\begin{theorem}\label{negativeanswer2}
Suppose $\sup_k\|\mathcal{E}_k|f-\mathcal{E}_{k-1}f|^p\|_\infty
^{1/p}\leq c_p(n)\|f\|_{\BMO}$ for some $p\geq 3.$ Then
\[
c_p(n)\geq c(\log (n+1))^{\frac 2p}.
\]
\end{theorem}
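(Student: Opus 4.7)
The plan is to combine Lemmas \ref{negativeanswer1} and \ref{holderforexpectation}. Set
$\Phi_q(f)=\sup_k\|\mathcal{E}_k|f-\mathcal{E}_{k-1}f|^q\|_\infty^{1/q}$,
so the hypothesis reads $\Phi_p(f)\leq c_p(n)\|f\|_{\BMO}$ while Lemma \ref{negativeanswer1} asserts that any inequality $\|f\|_{\BMO^c}\leq C(n)\Phi_1(f)$, valid for all selfadjoint $f$, forces $C(n)\geq c(\log(n+1))^2$. The strategy is to interpolate the hypothesis against $\Phi_1$ to produce such an inequality with $C(n)$ of order $c_p(n)^p$; the claimed lower bound on $c_p(n)$ then drops out by taking a $p$-th root.

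First I would apply Lemma \ref{holderforexpectation} with $x=f-\mathcal{E}_{k-1}f$ and take $\sup_k$ to get the one-step interpolation
$$\Phi_{(p+1)/2}(f)^{(p+1)/2}\leq\Phi_p(f)^{p/2}\,\Phi_1(f)^{1/2}. \qquad (\star)$$
Next I would invoke noncommutative Jensen: since $t\mapsto t^{q/r}$ is operator concave for $0<q\leq r$, one has $\mathcal{E}_m|x|^q\leq(\mathcal{E}_m|x|^r)^{q/r}$; taking $\|\cdot\|_\infty$ and supremum in $m$ yields $\Phi_q\leq\Phi_r$. This is the one place where the hypothesis $p\geq 3$ is used: it guarantees $(p+1)/2\geq 2$, so $\Phi_2(f)\leq\Phi_{(p+1)/2}(f)$.

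Now restrict to selfadjoint $f$, for which $\|f\|_{\BMO}=\|f\|_{\BMO^c}=\Phi_2(f)$. The hypothesis becomes $\Phi_p(f)\leq c_p(n)\Phi_2(f)$, and plugging this together with $\Phi_2\leq\Phi_{(p+1)/2}$ into $(\star)$ yields
$$\Phi_2(f)^{(p+1)/2}\leq c_p(n)^{p/2}\,\Phi_2(f)^{p/2}\,\Phi_1(f)^{1/2}.$$
Cancelling $\Phi_2(f)^{p/2}$ and squaring gives $\|f\|_{\BMO^c}=\Phi_2(f)\leq c_p(n)^{p}\,\Phi_1(f)$ for every selfadjoint $f$. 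Lemma \ref{negativeanswer1} then forces $c_p(n)^p\geq c(\log(n+1))^2$, and the conclusion $c_p(n)\geq c'(\log(n+1))^{2/p}$ follows by taking $p$-th roots (the factor $c^{1/p}$ is bounded below by an absolute constant for $p\geq 3$).

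The key structural point, and the only step that requires anything beyond bookkeeping, is $(\star)$, which is essentially a restatement of Lemma \ref{holderforexpectation}. A minor point to verify is the cancellation of $\Phi_2(f)^{p/2}$: if $\Phi_2(f)=0$ then $f$ is $\M_0$-measurable and the desired bound is trivial, so one may assume $\Phi_2(f)>0$.
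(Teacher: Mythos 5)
Your argument is correct and follows essentially the same route as the paper: operator Jensen (monotonicity $\Phi_2\le\Phi_{(p+1)/2}$, using $p\ge3$) combined with Lemma \ref{holderforexpectation} at the intermediate exponent $(p+1)/2$, then the hypothesis and cancellation to get $\|f\|_{\BMO^c}\le c_p(n)^p\,\Phi_1(f)$ for selfadjoint $f$, and finally Lemma \ref{negativeanswer1}. Your $(\star)$ and the explicit remarks on where $p\ge3$ enters and on the cancellation of $\Phi_2(f)^{p/2}$ are just a cleaner bookkeeping of the paper's chain of inequalities.
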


\begin{proof}Fix a selfadjoint $M_n$-valued function $b.$ By the
operator Jensen inequality and Lemma \ref{holderforexpectation}, for
$p\geq 3,$

\begin{eqnarray*}
\|b\|_{\BMO}^2 &=&\sup_m\|\mathcal{E}_m|b-\mathcal{E}_{m-1}b|^2\|_\infty  \\
&\leq &\sup_m\|\mathcal{E}_m|b-\mathcal{E}_{m-1}b|^{\frac{p+1}2}\|_\infty ^{\frac 4{p+1}} \\
&\leq &\sup_m\|\mathcal{E}_m|b-\mathcal{E}_{m-1}b|^p\|_\infty
^{\frac
2{p+1}}\sup_m\|\mathcal{E}_m|b-\mathcal{E}_{m-1}b\||_\infty ^{\frac 2{p+1}} \\
&\leq
&(c_p(n)\|b\|_{\BMO})^{\frac{2p}{p+1}}\sup_m\|\mathcal{E}_m|b-\mathcal{E}_{m-1}b\||_\infty
^{\frac 2{p+1}}.
\end{eqnarray*}
Then
\[
\|b\|_{\BMO}\leq
(c_p(n))^p\sup_m\|\mathcal{E}_m|b-\mathcal{E}_{m-1}b\||_\infty .
\]
By Lemma \ref{negativeanswer1}, we get
\[
(c_p(n))^p\geq c(\log (n+1))^2.
\]
\end{proof}
From Theorem \ref{negativeanswer2}, we get a negative answer for the
open question by letting $n\rightarrow \infty .$

\vskip 0.2 in \noindent {\bf Acknowledgments.} The authors are
grateful to Quanhua Xu for his helpful discussions which led to a
big improvement of the paper.


\end{document}